\documentclass[10pt,final]{siamltex}
\usepackage{amsmath}
\usepackage{bm}
\usepackage{amssymb,version}
\usepackage{cases}
\usepackage{color}
\usepackage{verbatim}
\usepackage{multirow}
\usepackage{graphicx}
\usepackage{subfigure}
\usepackage{graphics}
\usepackage{epsfig}
\usepackage{enumitem}
\newtheorem{remark}{Remark}[section]

\usepackage{silence}
\usepackage{hyperref}
\allowdisplaybreaks
\begin{document}
\graphicspath{{figures/},}
    \title{A linear mass-lumped finite element method for the Landau-Lifshitz-Gilbert equation: unconditional energy dissipation and length preservation
\thanks{This work is supported by the National Natural Science Foundation of China (Grant Nos. 12271302, 12131014) and Shandong Provincial Natural Science Foundation for Outstanding Youth Scholar (Grant No. ZR2024JQ030)}, and NSF (Grant No. DMS-2309548)}
 \author{Qiumei Huang 
      \thanks{College of Mathematics, Faculty of Science, Beijing University of Technology, Beijing 100124, China. Email: qmhuang@bjut.edu.cn}. 
       \and Binghong Li
        \thanks{School of Mathematics and Mathematical Research Center, Shandong University, Jinan, Shandong, 250100, P.R. China. Email: binghongsdu@163.com}.
        \and Xiaoli Li
        \thanks{Corresponding Author. School of Mathematics and State Key Laboratory of Cryptography and Digital Economy Security, Shandong University, Jinan, Shandong, 250100, P.R. China. Email: xiaolimath@sdu.edu.cn}.
        \and Cheng Wang
        \thanks{Mathematics Department, University of Massachusetts, North Dartmouth, MA 02747 USA. Email: cwang1@umassd.edu}.
        \and Jiang Yang
        \thanks{Department of Mathematics, Southern University of Science and Technology, Shenzhen 518055, China. Email: yangj7@sustech.edu.cn}. 
}

\maketitle
\begin{abstract}
We develop a linear, unconditionally energy-dissipative, mass-lumped finite element method for the highly nonlinear Landau--Lifshitz--Gilbert (LLG) equation on quasi-uniform triangular meshes. The method is built on a projection strategy for enforcing the nonconvex pointwise constraint $|\mathbf{m}| = 1$, whose simultaneous preservation with unconditional energy stability remains challenging for standard finite element discretizations. The key innovation is a unified hybrid finite element-finite difference framework that underlies both the design and the analysis of the proposed method. In the scheme construction, we exploit the weak formulation and nodal structure of mass-lumped finite element method, while incorporating suitable interpolation operators and a node-wise length-preserving mechanism inspired by finite difference discretizations. This combination yields a linear scheme that preserves the node-wise unit-length constraint and satisfies a discrete energy dissipation law. The same hybrid framework also plays a central role in the error analysis, where the weak formulation and quasi-uniform mesh structure of finite elements are combined with interpolation-based and nodewise finite difference method to control the strongly nonlinear damping term and to establish an optimal-order error estimate. More importantly, the proposed method provides a unified framework that systematically integrates the geometric flexibility of finite element method with the constraint-preserving property of finite difference method, and thus offers a general strategy for designing and analyzing structure-preserving discretizations of constrained dissipative systems. Numerical experiments, including a classical blow-up simulation, confirm the predicted accuracy, energy dissipation, and robustness of the method.

 \end{abstract}

 \begin{keywords}
Landau-Lifshitz-Gilbert equation; lumped mass finite element method; original energy-dissipative; length preserving;  error estimate 
 \end{keywords}
   \begin{AMS}
35Q56; 65M12; 65M15; 65M60
    \end{AMS}
  
 \section{Introduction}
 The Landau–Lifshitz–Gilbert (LLG) equation is widely used to characterize the magnetization dynamics in ferromagnetic materials \cite{jia2025electrically, jiang2015blowing, romming2013writing, nagaosa2013topological}, described by the form of a time-dependent nonlinear equation \cite{guo1993landau, landau1992theory}
 \begin{align} 
   \setlength{\abovedisplayskip}{4pt}  
   & 
     \mathbf{m}_t = - \beta\mathbf{m} \times \Delta\mathbf{m} - \gamma \mathbf{m} \times (\mathbf{m}\times\Delta\mathbf{m}),\,\, \text{in}\,\,\Omega\times J,  \label{originalmodel} 
 \\
   & 
     \mathbf{m}(\textbf{x},0) = \mathbf{m}_0(\textbf{x}),\,\,\text{with}\,\,|\mathbf{m}_0(\textbf{x})| = 1,\,\,\forall\,\textbf{x}\in \Omega,  \label{constraincondition} 
     \setlength{\belowdisplayskip}{4pt}  
 \end{align}
 subject to either homogeneous Neumann or periodic boundary conditions. Here $\mathbf{m}(\textbf{x},t) \in \mathbb{R}^3$ represents the magnetization vector, $\Omega$ is a bounded and convex domain in $\mathbb{R}^d$ with $d\in\{1,2,3\}$, $J$ denotes $(0,T]$ for some $T>0$, $\gamma>0$ is the Gilbert damping parameter and $\beta \in \mathbb{R}$ is an exchange parameter. Under condition \eqref{constraincondition}, it is known that the solution of \eqref{originalmodel} preserves its magnitude at a point-wise level, i.e.,
 \begin{equation}
   \setlength{\abovedisplayskip}{4pt}   
     |\mathbf{m}| = 1, \quad \text{in}\,\,\Omega\times J. 
  \setlength{\belowdisplayskip}{4pt}  
 \end{equation}
 Meanwhile, the following energy dissipation law also becomes valid: 
 \begin{equation}
    \setlength{\abovedisplayskip}{4pt}  
     \frac{dE}{dt} = - \gamma\|\mathbf{m}\times \Delta \mathbf{m}\|^2_{L^2},\,\,\text{where}\,\,E(\mathbf{m}) =\frac{1}{2}\int_\Omega|\nabla \mathbf{m}|^2d\textbf{x}. 
     \setlength{\belowdisplayskip}{4pt} 
 \end{equation}

 For energy-dissipative systems, simultaneously preserving energy dissipation and other essential physical properties has long been a central research focus \cite{du2021maximum, hou2024energy, hou2025energy}. As a well-known fact, it is essential to preserve both the energy dissipation property and the non-convex unit-length constraint $|\mathbf{m}| =1$ in the numerical simulation of the LLG equation. However, standard discretization techniques, particularly classical finite element methods (FEMs), typically fail to maintain an exact unit-length preservation \cite{akrivis2021higher}.  A substantial body of literature has investigated the convergence of unconstrained FEMs for the LLG equation without imposing renormalization \cite{akrivis2021higher, an2016optimal, cimrak2005error}. Among the various strategies developed to address the unit-length constraint, the sphere-projection or renormalization scheme has been widely adopted and demonstrated effectiveness in numerical computations. The renormalization approach \cite{cohen1989relaxation, gui2022convergence, kim2017mimetic, li2005numerical, suess2002time} enforces the constraint by normalizing the intermediate magnetization $\mathbf{\widetilde{m}}^n$ at each time step, setting $\displaystyle \textbf{m}^n = \mathbf{\widetilde{m}}^n/\,|\mathbf{\widetilde{m}}^n|.$ Despite its practical success, a rigorous error analysis of such a straightforward renormalization scheme remains challenging, especially in a combination with widely used finite element discretization and time-stepping schemes. The primary difficulty lies in establishing the stability of the discrete te,poral derivatives under the step-by-step renormalization process. This challenge is not unique to the LLG equation but is common across a broad class of related PDEs, including the harmonic map heat flow, the Landau–Lifshitz equation, and the nematic liquid crystal equations. In fact, there have been a series of works related to renormalization method. E and Wang established a first-order error estimate for the projection scheme in \cite{weinan2001numerical}.  An et al. \cite{an2021optimal} presented an error estimate for first- and second-order semi-implicit projection finite difference schemes, and the convergence result relies on inverse inequalities and requires a time step constraint, namely $h^2\le \Delta t\le h^{1+\epsilon_0}$. To relax this space-time constraint, an optimal-order error estimate has been established for a linearly implicit mass-lumped finite element method applied to the heat flow of harmonic maps on rectangular meshes in \cite{gui2022convergence}. This result holds under a constraint $ \Delta t \geq  \kappa h^{r+1} $ with $ r >1$, where $\kappa$ denotes an arbitrary positive constant.

To the authors' knowledge, none of the aforementioned normalization methods are unconditionally energy stable. To deal with this issue, Li et al. \cite{li2026stability} proposed a class of high-order implicit–explicit schemes based on the generalized scalar auxiliary variable approach for the Landau–Lifshitz equation. While the constructed schemes satisfy a modified energy dissipation law, a rigorous justification of the original energy dissipation property is still unavailable. Meanwhile, the normalized tangent plane FEM, which could also satisfy energy stability, was proposed by Alouges and Jaisson \cite{alouges2006convergence}. At each time step, this approach requires constructing a new finite element space that is node-wise orthogonal to the numerical solution obtained at the previous time step. Additionally, Alouges devised a discrete non-orthogonal projection scheme for an equivalent reformulation of \eqref{originalmodel}, namely 
 \begin{equation}
      \setlength{\abovedisplayskip}{4pt} 
        \lambda \textbf{m}_t - \textbf{m} \times \textbf{m}_t 
        = (1 + \lambda^2)(\Delta\textbf{m} + |\nabla \textbf{m}|^2\textbf{m}),\label{eq1.8}
     \setlength{\belowdisplayskip}{4pt} 
 \end{equation}
 where $\lambda > 0$ is a dimensionless Gilbert damping constant \cite{akrivis2021higher}, and demonstrated that this scheme preserves the energy dissipation and stability properties inherent to the continuous system \cite{alouges2008new}. Subsequently, An et al. \cite{an2025optimal} conducted a theoretical analysis of the normalized tangent plane method. While the renormalization approach provides a theoretical derivation of the original energy dissipation, it necessitates constructing a new finite element space at each time step, which incurs significant computational costs \cite{alouges2008new, an2025optimal}.

To develop a numerical scheme that is simple to implement and simultaneously guarantees unconditional energy dissipation and length preservation, inspired by the semi-implicit projection (SIP) method in \cite{du2025semi}, Li et al. \cite{li2026stability} has established a rigorous error analysis for this equivalent reformulation in the finite difference method (FDM), which preserves both the manifold constraint and the original energy dissipation:
\begin{equation}\label{semidiscrete}
  \setlength{\abovedisplayskip}{4pt} 
\left\{
    \begin{array}{l}
    \displaystyle
        \frac{\widetilde{\mathbf{m}}^{n+1} - \mathbf{m}^{n}}{\Delta t} = P(\mathbf{m}^n) \,
        \Delta_h \widetilde{\mathbf{m}}^{n+1} ,  
        \quad \mbox{with} \, \, \, P(\mathbf{m}) = \gamma(I-\frac{\mathbf{m}\mathbf{m}^T}{|\mathbf{m}|^2}) - \beta\mathbf{m}\times\cdot ,  \\
     \displaystyle    
       \mathbf{m}^{n+1} = \frac{\widetilde{\mathbf{m}}^{n+1}}{|\widetilde{\mathbf{m}}^{n+1}|} . 
    \end{array} \right. 
    \setlength{\belowdisplayskip}{4pt} 
\end{equation}
On the other hand, the error analysis relies heavily on the weak formulation constructed from the node-wise properties inherent in the finite difference discretization, which poses an essential difficulty in the extension to the finite element spatial approximation.

The main purpose of this paper is to present and analyze a fully discrete finite element scheme based on the SIP approach \eqref{semidiscrete}. Our main contributions are outlined as follows.

\begin{itemize}
    \item By integrating interpolation operators with the mass-lumping technique and leveraging the renormalization approach, we propose a fully discrete, linear finite element scheme for the LLG equation. The proposed scheme is straightforward to implement and enforces the non-convex constraint $|\mathbf{m}| = 1$ exactly, ensuring unconditional original energy dissipation.
     \item The constructed scheme could be interpreted as a hybrid approach that integrates the advantages of both FEMs and FDMs. In comparison with traditional FEMs, such an approach more effectively preserves node-wise properties, thereby inherently enhancing stability. In contrast, the constructed scheme accommodates more general quasi-uniform triangular partitions and demands lower regularity of the exact solution, which is advantageous over FDMs. 
    \item Owing to the node-wise length-preserving property and the equivalence of weak formulations, both of which are crucial for estimating the highly nonlinear damping terms, we establish a hybrid analytical framework to obtain optimal error analysis that combines mass-lumped finite element with finite difference expansion at each cell. This approach is primarily realized through the introduction of interpolation operators and a weak Laplacian operator. To the best of our knowledge, this is the first work to develop such an innovative hybrid finite element and finite difference method aimed at achieving optimal error estimates {\color{black} on quasi‑uniform triangular meshes}.
    

\end{itemize}

{\color{black} More importantly, the proposed method provides a unified framework that systematically integrates the geometric flexibility of finite element method with the constraint-preserving mechanisms of finite difference method, and thus offers a general strategy for designing and analyzing structure-preserving discretizations of constrained dissipative systems. 
We establish an optimal rate error estimate for the numerical solution, in the \(L^\infty(0,\,T;\,L^2(\Omega))\) and \(L^2(0,\,T;H^1(\Omega))\) norms. These results hold under mild grid ratio conditions: in two-dimensional (2D) cases with \(h^2 \lesssim \Delta t \lesssim h^{\epsilon_0}\) and in three-dimensional (3D) cases with \(h^2 \lesssim \Delta t \lesssim h^{1+\epsilon_0}\) and \(h \leq h_0\), where $0<\epsilon_0<1$ and $h_0$ is a positive constant.  

This paper is organized as follows. In Section 2, some preliminary notations and interpolation estimates are reviewed. The fully discrete finite element scheme is presented, and an unconditional energy dissipation is proved in Section 3. In Section 4, we provide a rigorous and optimal rate error analysis for the proposed finite element scheme. In Section 5, some numerical experiments are carried out using the constructed scheme. Finally, some concluding remarks are made in Section 6.
  \section{Some preliminary notations and results}
In this section, we review some notations and preliminary results, which will be frequently used in the later analysis. Throughout the paper, we use $C$, with or without subscript, to denote a positive constant, which could have different values at different appearances.

The standard notations, $L^2(\Omega)$, $H^k(\Omega)$ and $W^{k,p}(\Omega)$, are used to denote the usual Sobolev spaces over $\Omega$. In particular, $\|\cdot\|_{L^2}$ and $(\cdot,\cdot)$ are used to denote the norm and the inner product in $L^2(\Omega)$, respectively. The vectors and vector spaces will be indicated by boldface type. Set $\Delta t = T/N,\,t^n = n\Delta t$. In this paper, denote $\mathcal{K}_h$ as a quasi-uniform partition of $\Omega$ into triangles $K_j,\,\,j=1,\cdots,|\mathcal{K}_h|$ with mesh size $h = \max_{1\leq j\leq |\mathcal{K}_h|}\{diam K_j\}$, {\color{black} where, except for a finite number of triangles, the triangulation of $\Omega$ can be partitioned into pairs of adjacent triangles, each pair forming an $O(h^{2})$-approximate parallelogram, see \cite{bank2003asymptotically} for details}. We primarily consider \textcolor{black}{homogeneous Neumann} and periodic boundary conditions. \textcolor{black}{There is no essential difference in the treatment of these two types of boundary conditions in this paper. Therefore, this work mainly takes the periodic boundary condition as an example to present the numerical scheme and theoretical analysis. Unless otherwise specified, all function spaces in this paper correspond to the periodic boundary condition.} Subsequently, the tensor-product finite element space of degree $r\geq 1$ is defined as 
\begin{equation*}
    \mathbf{V}_h := \{\mathbf{v}_h \in \mathbf{H}^1:\mathbf{v}_h|_{K_j}\in {\color{black} \mathbf{P_1}}(K_j),\,\,\forall\,K_j\in \mathcal{K}_h\},
\end{equation*}
where $P_1(K_j)$ is the space of polynomials of degree $r=1$ on $K_j$. Let $\mathcal{I}_h$ be the piecewise Lagrange interpolation operator with all vertices $\mathcal{N}_h$ of the triangulation $\mathcal{K}_h$. In a lumped mass FEM, the discrete inner product $(\cdot,\cdot)_h$ is defined as follows, for any two continuous functions: 
\begin{equation*} 
  \setlength{\abovedisplayskip}{4pt}  
    (f,g)_h = \sum_{j=1}^{|\mathcal{K}_h|}\frac{|K_j|}{3}\sum^3_{i=1}f(x^{K_j}_i)g(x^{K_j}_i),\,\,\|f\|_{L^2_h}^2 = (f,f)_h, 
   \setlength{\belowdisplayskip}{4pt} 
\end{equation*} 
where $x^{K_j}_i$ stand for the three vertices of the small triangle $K_j$ respectively. More details of the mass-lumped finite element method could be found in \cite{chen1985lumped}. Moreover, for the finite element function space, the following estimate becomes available. 
\medskip
\begin{lemma}\label{lemlumped}\cite[Lemma 1]{chen1985lumped}
    Let $\varepsilon_h(v_h,w_h) = (v_h,w_h)_h-(v_h,w_h)$. We have for $v_h,w_h \in V_h$,
    \begin{equation}
     \setlength{\abovedisplayskip}{4pt}  
        |\varepsilon_h(v_h,w_h)| \leq Ch^2\|\nabla v_h\|_{L^2}\|\nabla w_h\|_{L^2}. 
     \setlength{\belowdisplayskip}{4pt}  
    \end{equation}
\end{lemma}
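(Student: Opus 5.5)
The plan is to prove the estimate element by element, using the fact that the quadrature defining $(\cdot,\cdot)_h$ integrates affine functions exactly. Fix a triangle $K=K_j$ and define the local quadrature error
\[
E_K(\phi)=\frac{|K|}{3}\sum_{i=1}^3\phi(x_i^K)-\int_K\phi\,d\mathbf{x}.
\]
Then $\varepsilon_h(v_h,w_h)=\sum_K E_K(v_hw_h)$. On $K$ the product $v_hw_h$ is a quadratic polynomial. Since $E_K$ vanishes on $P_1(K)$, it depends only on the second derivatives of $v_hw_h$. Because $v_h,w_h$ are affine on $K$, these second derivatives are constant:
\[
\partial_{x_k}\partial_{x_l}(v_hw_h)=\partial_kv_h\,\partial_lw_h+\partial_lv_h\,\partial_kw_h .
\]
So the first step is to record the bound $|D^2(v_hw_h)|\le 2|\nabla v_h|_K|\,|\nabla w_h|_K|$, where both gradients are constant vectors on $K$.

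The second step is a scaling (Bramble--Hilbert type) argument. Map $K$ affinely to the reference triangle $\hat K$, where $\hat E(\hat\phi)$ is a bounded linear functional on $P_2(\hat K)$ that vanishes on $P_1$. This gives $|\hat E(\hat\phi)|\le \hat C|\hat\phi|_{W^{2,\infty}(\hat K)}$. Pulling back, we have $E_K(\phi)=\frac{|K|}{|\hat K|}\hat E(\hat\phi)$ and $|\hat\phi|_{W^{2,\infty}}\le C h_K^2|\phi|_{W^{2,\infty}(K)}$. Shape regularity, which is implied by quasi-uniformity, is needed here. It follows that
\[
|E_K(v_hw_h)|\le C h_K^2|K|\,|\nabla v_h|_K|\,|\nabla w_h|_K| = Ch_K^2\int_K|\nabla v_h||\nabla w_h|\,d\mathbf{x},
\]
where the last equality uses again that the gradients are constant on $K$. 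One could also compute $E_K$ explicitly on a quadratic, since vertex quadrature has an explicit error in terms of the edge vectors, but the scaling argument is cleaner.

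The final step is to sum over $K$, use $h_K\le h$, and apply the Cauchy--Schwarz inequality:
\[
|\varepsilon_h(v_h,w_h)|\le Ch^2\sum_K\int_K|\nabla v_h||\nabla w_h|\le Ch^2\|\nabla v_h\|_{L^2}\|\nabla w_h\|_{L^2}.
\]

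The only point requiring care is the second step. We must ensure that the constant is uniform over the mesh; this is where shape regularity enters through the bound $\|B_K\|\le Ch_K$ on the affine map. We must also check that the functional really annihilates all of $P_1$. It does, since vertex quadrature is exact for affine functions on triangles: the integral of an affine function equals $|K|$ times its value at the centroid, which is the average of the vertex values.

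The periodic or Neumann boundary conditions play no role, because the argument is purely local.
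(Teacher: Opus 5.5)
Your proof is correct. It is essentially the argument behind the cited result \cite[Lemma 1]{chen1985lumped}; the paper itself only cites it and gives no proof. That argument runs as follows: vertex quadrature is exact on $P_1$, and a Bramble--Hilbert/scaling bound controls the element error by $h_K^2$ times the constant second derivatives of $v_hw_h$, which are products of $\nabla v_h$ and $\nabla w_h$; Cauchy--Schwarz then finishes. One small remark: $\|B_K\|\le h_K/\hat\rho$ holds for every triangle, so shape regularity is not actually needed in this direction. The explicit identity $E_K(v_hw_h)=\frac{|K|}{4}\sum_{i=1}^3(\nabla v_h\cdot(x_i^K-c_K))(\nabla w_h\cdot(x_i^K-c_K))$, with $c_K$ the centroid of $K$, shows this directly.
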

\medskip
\begin{lemma}\cite[Theorem 4.4.20]{brenner2008mathematical}
    The following error estimates are valid for the Lagrange interpolation operator $\mathcal{I}_h:C(\bar{\Omega}) \to \mathbf{V}_h$:
    \begin{equation}
    \setlength{\abovedisplayskip}{4pt}  
        \|\mathbf{v}-\mathcal{I}_h\mathbf{v}\|_{L^p}+h\|\mathbf{v}-\mathcal{I}_h\mathbf{v}\|_{W^{1,p}} \leq Ch^{s+1}\|\mathbf{v}\|_{W^{s+1,p}},
     \setlength{\belowdisplayskip}{4pt}  
    \end{equation}
\end{lemma}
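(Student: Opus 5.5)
The plan is the standard Bramble--Hilbert argument: reduce to a fixed reference triangle, bound the interpolation error there by the seminorm $|\hat{\mathbf v}|_{W^{s+1,p}}$, and scale back to each $K\in\mathcal K_h$. Throughout we take $s=1$ (the space $\mathbf V_h$ is piecewise $\mathbf P_1$, so $0\le s\le r=1$), $d\le 3$, and $p$ with $W^{s+1,p}\hookrightarrow C(\bar\Omega)$, i.e. $(s+1)p>d$. Since $\mathcal I_h$ acts componentwise on vector fields, it suffices to treat a scalar $v$. Because $\mathcal I_h$ is defined vertex by vertex, $(\mathcal I_h v)|_K=\mathcal I_K(v|_K)$ for each element $K$. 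Hence the global estimate follows by summing the $p$-th powers of local estimates, with the obvious modification when $p=\infty$.

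\textbf{Step 1 (reference element).} Fix the reference triangle $\hat K$ and its nodal interpolant $\hat{\mathcal I}$ onto $P_1(\hat K)$. By the Sobolev embedding, $\|\hat{\mathcal I}\hat v\|_{W^{1,p}(\hat K)}\le C\|\hat v\|_{L^\infty(\hat K)}\le C\|\hat v\|_{W^{s+1,p}(\hat K)}$. So $\hat v\mapsto \hat v-\hat{\mathcal I}\hat v$ is a bounded linear map from $W^{s+1,p}(\hat K)$ to $W^{1,p}(\hat K)$. It vanishes on $P_s\subset P_1$, so we may replace $\hat v$ by $\hat v-q$ for any $q\in P_s$. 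The Bramble--Hilbert lemma (equivalently the Deny--Lions lemma, $\inf_{q\in P_s}\|\hat v-q\|_{W^{s+1,p}}\le C|\hat v|_{W^{s+1,p}}$) then gives
\[
\|\hat v-\hat{\mathcal I}\hat v\|_{W^{m,p}(\hat K)}\le C|\hat v|_{W^{s+1,p}(\hat K)},\qquad m=0,1 .
\]

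\textbf{Step 2 (scaling).} For $K\in\mathcal K_h$, let $F_K(\hat x)=B_K\hat x+b_K$ be the affine map onto $K$, so that $\widehat{\mathcal I_K v}=\hat{\mathcal I}\hat v$. The change-of-variables estimates are
\[
|v|_{W^{m,p}(K)}\le C\|B_K^{-1}\|^m|\det B_K|^{1/p}|\hat v|_{W^{m,p}(\hat K)},\qquad
|\hat v|_{W^{s+1,p}(\hat K)}\le C\|B_K\|^{s+1}|\det B_K|^{-1/p}|v|_{W^{s+1,p}(K)},
\]
together with $\|B_K\|\le Ch_K$ and $\|B_K^{-1}\|\le C/\rho_K$. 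Combining these with Step 1 yields
\[
|v-\mathcal I_K v|_{W^{m,p}(K)}\le C\,h_K^{s+1}\rho_K^{-m}\,|v|_{W^{s+1,p}(K)} .
\]
Quasi-uniformity (shape regularity is enough here) gives $h_K/\rho_K\le\sigma$, so the right-hand side is at most $Ch^{s+1-m}|v|_{W^{s+1,p}(K)}$.

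\textbf{Step 3 (summation).} Sum over $K$ for $m=0$ and $m=1$, then multiply the $m=1$ bound by $h$. This gives the stated inequality, with the seminorm bounded above by the full norm $\|\mathbf v\|_{W^{s+1,p}}$. The constant depends only on $\hat K$, $p$, $s$ and the shape-regularity constant. The bound uses only the geometry of each element, so it holds equally for periodic or Neumann settings.

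The only delicate point is Step 1: it needs the continuity of point evaluation, i.e. $(s+1)p>d$, which is implicitly assumed by writing $\mathcal I_h: C(\bar\Omega)\to\mathbf V_h$. The Bramble--Hilbert lemma, which rests on a compactness argument on $\hat K$, I would simply quote from the cited reference rather than reprove.
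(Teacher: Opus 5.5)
Your proposal is correct. The paper gives no proof of its own, only the citation, and your argument is essentially the one behind the cited Brenner--Scott Theorem 4.4.20: the embedding $W^{s+1,p}\hookrightarrow C$ for $(s+1)p>d$ makes the reference-element interpolant bounded, Bramble--Hilbert on $\hat K$ gives the local bound, and affine scaling plus summation over elements completes it. Your remarks that shape regularity suffices, and that the argument written with general $s$ also covers $s=0$ (despite your opening phrase ``we take $s=1$''), are both accurate.
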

for $0 \leq s\leq r$ and $p >d/(s+1)$ (in this case $W^{s+1,p}\hookrightarrow C(\bar{\Omega})$).

{\color{black} 
For ease of presentation, we describe the difference operators employed in our finite element method on a rectangular domain $\Omega = (0,L_x)\times(0,L_y)$, discretized by right-triangular meshes generated from quasi-uniform grids. For general domains, it suffices to impose the mesh conditions given in \cite{bank2003asymptotically}. Under these conditions, the extension to general triangulations follows in a natural manner.}

The two-dimensional domain $\Omega$ is partitioned by $\delta_x \times \delta_y$, where
\begin{equation*}
   \setlength{\abovedisplayskip}{4pt}   
    \delta_x:0=x_0<x_1<\cdots<x_{N_x}=L_x,\,\,\delta_y:0=y_0<y_1<\cdots<y_{N_y}=L_y . 
   \setlength{\belowdisplayskip}{4pt}  
\end{equation*}
Furthermore, the computational grid and the associated grid function space are given by 
\begin{equation*}
\aligned
\setlength{\abovedisplayskip}{4pt}   
    \mathcal{T}_h &= \{(i,j)|\,i=0,\cdots,N_x,\,j=0,\cdots,N_y\},\\
    \mathcal{M}_h &= \{m_{i,j}|\,(i,j) \in \mathcal{T}_h\},~~~~ \mathcal{M}^3_h = \mathcal{M}_h\times \mathcal{M}_h\times \mathcal{M}_h.
\endaligned
  \setlength{\belowdisplayskip}{4pt}  
\end{equation*}
Based on the above notations, for $f \in \mathcal{M}_h$, we denote
\begin{equation*}
  \setlength{\abovedisplayskip}{4pt}   
    \nabla^x_h f_{i+1/2,j} = \frac{f_{i+1,j} - f_{i,j}}{h_{i+1/2}},\,\nabla^y_h f_{i,j+1/2,k} = \frac{f_{i,j+1} - f_{i,j}}{\tau_{j+1/2}}, \, \,  h_{i+1/2} = x_{i+1}-x_i,\, \, \tau_{j+1/2} =y_{j+1}-y_j , 
    \setlength{\belowdisplayskip}{4pt}  
\end{equation*}
and $\nabla_hf = (\nabla_h^x f,\,\nabla^y_hf)^T$. In turn, for $\boldsymbol{f} = (f^x,f^y,f^z) \in \mathcal{M}_h^3$, the discrete gradient operator becomes 
\begin{equation*}
  \setlength{\abovedisplayskip}{4pt}  
    \nabla_h\boldsymbol{f} = 
    \left[ \begin{array}{cc}
         \nabla_h^x f^x& \nabla_h^y f^x \\
         \nabla_h^x f^y& \nabla_h^y f^y\\
         \nabla_h^x f^z& \nabla_h^y f^z
    \end{array}\right],
    \setlength{\belowdisplayskip}{4pt}  
\end{equation*}
and $\nabla_h\boldsymbol{f}:\nabla_h \boldsymbol{g}$ denotes the operation of multiplying the elements at the corresponding positions of the matrices, combined with a summation.

 The following results will be frequently employed in the subsequent analysis. Their proofs and more detailed discussions could be found in \cite{brenner2008mathematical}.
 \medskip
 \begin{lemma} (Inverse and interpolation inequalities)\label{leminverse}
     For any function $\boldsymbol{f}_h \in \mathbf{V}_h$, we have 
     \begin{align}
     \setlength{\abovedisplayskip}{4pt} 
     \displaystyle
         \|\boldsymbol{f}_h\|_{L^q} \leq Ch^{-(\frac{d}{p} - \frac{d}{q})}\|\boldsymbol{f}_h\|_{L^p}, \, \, \, 
         1\leq p < q  \leq \infty, \quad \text{$d$ is the dimension} ,  \label{inverse ineq-1} 
 \\
     \displaystyle
         \|\boldsymbol{f}_h \|_{L^q} \leq C  \|\boldsymbol{f}_h \|^{\frac{2}{q}} \cdot 
          \|\boldsymbol{f}_h \|_{H^1}^{1 - \frac{2}{q}} 
          \le \breve{C}_0  \|\boldsymbol{f}_h \|^{\frac{2}{q}} 
          ( \|\boldsymbol{f}_h \| + \| \nabla \boldsymbol{f}_h \| )^{1 - \frac{2}{q}} , 
          \, \, \, \forall\,2 < q < + \infty ,  \label{interpolation ineq-1} 
         \setlength{\belowdisplayskip}{4pt}
     \end{align} 
in which $\breve{C}_0$ is a constant only dependent on $\Omega$.
 \end{lemma}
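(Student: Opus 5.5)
The plan is to prove both inequalities with the standard local-to-global scaling argument on the quasi-uniform triangulation $\mathcal{K}_h$, combined for \eqref{interpolation ineq-1} with the continuous Gagliardo--Nirenberg inequality, which applies because $\mathbf{V}_h\subset \mathbf{H}^1$.

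For \eqref{inverse ineq-1}, I will first work on a single element. Fix $K\in\mathcal{K}_h$ with affine map $F_K:\hat K\to K$, $F_K(\hat x)=B_K\hat x+b_K$, from the reference triangle, and set $\hat f=\boldsymbol{f}_h\circ F_K\in \mathbf{P}_1(\hat K)$. Since $\mathbf{P}_1(\hat K)$ is finite dimensional, all norms on it are equivalent, so $\|\hat f\|_{L^q(\hat K)}\le \hat C\|\hat f\|_{L^p(\hat K)}$ with $\hat C$ independent of $K$. Shape regularity, which is implied by quasi-uniformity, gives $|\det B_K|\simeq h^d$. Changing variables yields $\|\boldsymbol{f}_h\|_{L^q(K)}\le C h^{d/q-d/p}\|\boldsymbol{f}_h\|_{L^p(K)}$. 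Globalizing then splits into two cases. If $q=\infty$, I take the maximum over elements and use $\|\cdot\|_{L^p(K)}\le\|\cdot\|_{L^p(\Omega)}$. If $q<\infty$, I raise the local bound to the power $q$ and sum over $K$. Since $q/p\ge 1$, the elementary inequality $\sum_K a_K^{q/p}\le(\sum_K a_K)^{q/p}$ gives $\|\boldsymbol{f}_h\|_{L^q}\le Ch^{-(d/p-d/q)}\|\boldsymbol{f}_h\|_{L^p}$. It is essential that quasi-uniformity holds here, since the local constants must be uniform with a single global $h$.

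For \eqref{interpolation ineq-1}, I will use that $\boldsymbol f_h\in \mathbf H^1(\Omega)$ and apply the Gagliardo--Nirenberg/Ladyzhenskaya inequality on the bounded convex (Lipschitz) domain, or on the torus in the periodic case: $\|v\|_{L^q}\le C\|v\|^{2/q}\|v\|_{H^1}^{1-2/q}$. This exponent is exactly the one for $d=2$, which is the setting the paper fixes for presentation. It follows by extending $v$ to $\mathbb R^2$ with a bounded extension operator and interpolating between the Sobolev embeddings. The second inequality then follows from $\|v\|_{H^1}\le \|v\|+\|\nabla v\|$, with $\breve C_0$ absorbing the constants, which depend only on $\Omega$ and $q$.

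The main obstacle is the dimension dependence in \eqref{interpolation ineq-1}. In $d=3$ the exponent $2/q$ is not correct for every $q$ (the correct one is $\theta=3/q-1/2$ for $q\le 6$), so the statement must be read for $d=2$, or for a restricted range of $q$ when $d=3$. I would make this restriction explicit and otherwise cite \cite{brenner2008mathematical}. The inverse estimate itself is routine.
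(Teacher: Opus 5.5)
Your proposal is correct and follows the standard argument that the paper itself does not write out but only cites from \cite{brenner2008mathematical}: for the inverse estimate, elementwise norm equivalence on $\mathbf{P}_1(\hat K)$ combined with affine scaling, quasi-uniformity, and the summation step $\sum_K a_K^{q/p}\le(\sum_K a_K)^{q/p}$; for the interpolation estimate, the Gagliardo--Nirenberg inequality applied to $\mathbf{V}_h\subset\mathbf{H}^1$. Your caveat is also justified: with the exponent $2/q$ the second estimate holds only for $d\le 2$ (for a single hat function at scale $h$ in $d=3$, the ratio of the left side to the right side behaves like $h^{1/q-1/2}\to\infty$), and $\breve{C}_0$ must depend on $q$ as well as on $\Omega$, which is harmless here because the paper only uses the estimate in 2D with the fixed value $q_0=8\epsilon_0^{-1}$.
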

 
Given any grid functions $f,\,g$ and vector grid functions $\boldsymbol{f},\,\boldsymbol{g}$, by the definition of $\nabla_h$, it is easy to verify that the following equalities hold:
 \begin{align*}
     \nabla_h (fg) &= \nabla_hf\,\mathcal{A}_hg + \nabla_hg\,\mathcal{A}_hf,\\
     \nabla_h(\boldsymbol{f} \cdot \boldsymbol{g}) &= \nabla_h\boldsymbol{f} \cdot \mathcal{A}_h\boldsymbol{g} + \nabla_h \boldsymbol{g} \cdot \mathcal{A}_h\boldsymbol{f},\\
     \nabla_h(f\boldsymbol{g}) &= \nabla_hf\,\mathcal{A}_h\boldsymbol{g} + \nabla_h\boldsymbol{g}\mathcal{A}_hf,\\
     \nabla_h(\boldsymbol{f}\times\boldsymbol{g}) &= \nabla_h\boldsymbol{f}\times\mathcal{A}_h\boldsymbol{g} + \mathcal{A}_h\boldsymbol{f}\times \nabla_h\boldsymbol{g},
 \end{align*}
 where \(\mathcal{A}_h\) is the matched central interpolation operator and the operations follow the tensor form.
 
The following discrete version of the Gronwall lemma (see, for instance, \cite{HeSu07,shen1990long}) will be frequently used. 
\medskip
\begin{lemma} \label{lem: gronwall2}
Let $a_k$, $b_k$, $c_k$, $d_k$, $\gamma_k$, $\Delta t_k$ be non-negative real numbers such that
\begin{equation*}\label{e_Gronwall3}
\aligned
a_{k+1}-a_k+b_{k+1}\Delta t_{k+1}+c_{k+1}\Delta t_{k+1}-c_k\Delta t_k\leq a_kd_k\Delta t_k+\gamma_{k+1}\Delta t_{k+1}
\endaligned
\end{equation*}
for all $0\leq k\leq m$. Then
 \begin{equation*}\label{e_Gronwall4}
 \setlength{\abovedisplayskip}{4pt}  
\aligned
a_{m+1}+\sum_{k=0}^{m+1}b_k\Delta t_k \leq \exp \Big(\sum_{k=0}^md_k\Delta t_k \Big)\{a_0+(b_0+c_0)\Delta t_0+\sum_{k=1}^{m+1}\gamma_k\Delta t_k \}.
\endaligned
 \setlength{\belowdisplayskip}{4pt} 
\end{equation*}
\end{lemma}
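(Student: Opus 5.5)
We prove the discrete Gronwall inequality of Lemma~\ref{lem: gronwall2} by a telescoping argument followed by a discrete induction. To simplify notation, set $e_k := c_k\Delta t_k \ge 0$. The hypothesis then reads
\[
a_{k+1}+b_{k+1}\Delta t_{k+1}+e_{k+1} \le a_k(1+d_k\Delta t_k)+e_k+\gamma_{k+1}\Delta t_{k+1}, \qquad 0\le k\le m .
\]
We introduce the combined quantity
\[
S_{k} := a_k+\sum_{j=0}^{k} b_j\Delta t_j + e_k .
\]
Adding $\sum_{j=0}^{k}b_j\Delta t_j$ to both sides of the hypothesis gives the one-step recursion
\[
S_{k+1} \le S_k + a_k d_k\Delta t_k + \gamma_{k+1}\Delta t_{k+1}.
\]
Since every quantity involved is nonnegative, $a_k\le S_k$, and therefore
\[
S_{k+1}\le (1+d_k\Delta t_k)S_k+\gamma_{k+1}\Delta t_{k+1}.
\]

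The key step is to iterate this linear recursion. By induction on $k$ we claim
\[
S_{k}\le \prod_{j=0}^{k-1}(1+d_j\Delta t_j)\Big(S_0+\sum_{j=1}^{k}\gamma_j\Delta t_j\Big).
\]
The case $k=0$ is trivial. For the induction step, apply the recursion and use that each factor $(1+d_j\Delta t_j)\ge 1$. This lets us absorb the new term $\gamma_{k+1}\Delta t_{k+1}$ inside the product without enlarging the bound incorrectly.

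We then use $1+x\le e^{x}$ for $x\ge 0$ to turn the product into $\exp\big(\sum_{j=0}^{m}d_j\Delta t_j\big)$. With $k=m+1$ this yields
\[
a_{m+1}+\sum_{k=0}^{m+1}b_k\Delta t_k\le S_{m+1}\le \exp\Big(\sum_{k=0}^m d_k\Delta t_k\Big)\Big\{a_0+(b_0+c_0)\Delta t_0+\sum_{k=1}^{m+1}\gamma_k\Delta t_k\Big\},
\]
where the first inequality drops the nonnegative term $e_{m+1}$, and $S_0=a_0+(b_0+c_0)\Delta t_0$.

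The only delicate point is bounding $a_kd_k\Delta t_k$ by $d_k\Delta t_k S_k$, which is what permits the induction to close. This requires the nonnegativity of $b_j$ and $c_k$ and of every other quantity in the hypothesis, all of which are assumed. No smallness condition on $d_k\Delta t_k$ is needed, because the term $a_kd_k\Delta t_k$ is explicit at level $k$ rather than implicit at level $k+1$.
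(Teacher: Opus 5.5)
Your proof is correct. The recursion $S_{k+1}\le(1+d_k\Delta t_k)S_k+\gamma_{k+1}\Delta t_{k+1}$ follows directly from the hypothesis, the bound $a_k\le S_k$ needs only nonnegativity, and the induction, the use of $1+x\le e^{x}$, and the identification $S_0=a_0+(b_0+c_0)\Delta t_0$ are all sound. The paper does not prove this lemma and only cites it from \cite{HeSu07,shen1990long}, so there is no in-paper argument to compare against. Your telescoping-plus-induction argument is the standard proof of this explicit-form discrete Gronwall inequality. It actually gives a slightly sharper bound: $\prod_{j=0}^{m}(1+d_j\Delta t_j)$ in place of the exponential, with $c_{m+1}\Delta t_{m+1}$ kept on the left-hand side.
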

\section{An energy-dissipative and length preserving finite element scheme}
In this section, we construct a novel finite element numerical scheme that preserves both a node-wise unit length and the original energy dissipation. 

 First, for any $\mathbf{v}_h \in \mathbf{V}_h$, a weak Laplacian operator $\Delta_h:\,\mathbf{V}_h \to \mathbf{V}_h$ is introduced as  
\begin{equation} 
\setlength{\abovedisplayskip}{4pt}  
  ( \Delta_h \mathbf{v}_h, \mathbf{w}_ h )_h = -(\nabla \mathbf{v}_h, \nabla \mathbf{w}_h) , \quad 
  \forall \,\, \mathbf{v}_h , \, \mathbf{w}_h \in \mathbf{V}_h .  \label{A_h-def} 
\setlength{\belowdisplayskip}{4pt} 
\end{equation} 
Next, the fully discrete numerical scheme is proposed, based on a projection method in time and finite element spatial discretization.

\textbf{Scheme.} Given $\mathbf{m}_h^{n-1}$, find $\widetilde{\mathbf{m}}^{n}_h \in \mathbf{V}_h$ such that
\begin{equation}\label{discretescheme1}
  \setlength{\abovedisplayskip}{4pt}  
    \begin{aligned} 
      & 
         (\frac{\widetilde{\mathbf{m}}_h^n - \mathbf{m}_h^{n-1}}{\Delta t},\mathbf{v}_h)_h + \gamma(\nabla \widetilde{\mathbf{m}}_h^n,\nabla \mathbf{v}_h) \\
      & \qquad 
         = \beta (\nabla\widetilde{\mathbf{m}}^n_h,\nabla{\cal I}_h(\mathbf{v}_h \times \mathbf{m}^{n-1}_h)) + \gamma(\nabla
          \widetilde{\mathbf{m}}^n_h,\nabla{\cal I}_h((\mathbf{\mathbf{m}}^{n-1}_h \cdot \mathbf{v}_h)\mathbf{m}^{n-1}_h)),\,\,\forall\,\mathbf{v}_h \in \mathbf{V}_h , 
    \end{aligned}
   \setlength{\belowdisplayskip}{4pt} 
\end{equation}
where $\mathbf{m}^0_h = \mathcal{I}_h\mathbf{m}_e^0$. Afterward, the predicted numerical solution $\widetilde{\mathbf{m}}^n_h \in \mathbf{V}_h$ is renormalized as 
{\color{black}
\begin{equation}\label{discretescheme2}
   \setlength{\abovedisplayskip}{4pt}  
          \mathbf{m}^n_h(\textbf{x}_i) = \frac{\widetilde{\mathbf{m}}_h^{n}(\textbf{x}_i)}{|\widetilde{\mathbf{m}}_h^{n}(\textbf{x}_i)|}\Longleftrightarrow\mathbf{m}_h^n = {\cal I}_h \Big(\frac{\widetilde{\mathbf{m}}_h^{n}}{|\widetilde{\mathbf{m}}_h^{n}|} \Big),
   \setlength{\belowdisplayskip}{4pt}  
\end{equation}
which is simple to implement for the node‑wise normalization operation.
}

Subsequently, we derive the connection between the above scheme and the original LLG equation. By using the mass-lumped integration by parts formula \eqref{A_h-def}, it is observed that 
\begin{equation}
 \setlength{\abovedisplayskip}{4pt}  
\aligned 
         &(\frac{\widetilde{\mathbf{m}}_h^n - \mathbf{m}_h^{n-1}}{\Delta t},\mathbf{v}_h)_h - \gamma(\Delta_h \widetilde{\mathbf{m}}_h^n, \mathbf{v}_h)_h \\
         &~~~~= -\beta (\Delta_h\widetilde{\mathbf{m}}^n_h,{\cal I}_h(\mathbf{v}_h \times \mathbf{m}^{n-1}_h))_h - \gamma(\Delta_h
          \widetilde{\mathbf{m}}^n_h,{\cal I}_h((\mathbf{\mathbf{m}}^{n-1}_h \cdot \mathbf{v}_h)\mathbf{m}^{n-1}_h))_h \\ 
          &~~~~= -\beta ({\cal I}_h(\mathbf{m}^{n-1}_h\times \Delta_h\widetilde{\mathbf{m}}^n_h),\mathbf{v}_h)_h - \gamma ({\cal I}_h((\mathbf{\mathbf{m}}^{n-1}_h \cdot \Delta_h
          \widetilde{\mathbf{m}}^n_h)\mathbf{m}^{n-1}_h) , \mathbf{v}_h)_h . 
\endaligned          
  \setlength{\belowdisplayskip}{4pt} 
       \label{discrete1} 
\end{equation} 
Because of a vector identity $\boldsymbol{a}\times(\boldsymbol{b}\times \boldsymbol{c}) = (\boldsymbol{a}\cdot\boldsymbol{c})\boldsymbol{b}-(\boldsymbol{a}\cdot\boldsymbol{b})\boldsymbol{c}$, the following equation is obtained at the node points: 
\begin{equation*}
    \setlength{\abovedisplayskip}{4pt}   
    \widetilde{\mathbf{m}}^n_h({\bf x}_i) = \mathbf{m}^{n-1}_h({\bf x}_i)  - \Delta t \mathbf{m}^{n-1}_h \times (\beta \Delta_h\widetilde{\mathbf{m}}^{n}_h + \gamma \mathbf{m}_h^{n-1}\times \Delta_h\widetilde{\mathbf{m}}^{n}_h)({\bf x}_i),\,\, \, \forall\,1\leq i \leq |\mathcal{N}_h|,  
     \setlength{\belowdisplayskip}{4pt}  
\end{equation*}
which in turn implies that 
\begin{equation}
   \setlength{\abovedisplayskip}{4pt}   
    |\widetilde{\mathbf{m}}^{n}_{h}(\textbf{x}_i)|^2 \geq |\mathbf{m}^{n-1}_{h}(\textbf{x}_i)|^2=1, \quad 
    \widetilde{\mathbf{m}}^n_{h}(\textbf{x}_i)\cdot \mathbf{m}^{n-1}_{h}(\textbf{x}_i) = |\mathbf{m}^{n-1}_{h}(\textbf{x}_i)|^2 =1 .   
     \setlength{\belowdisplayskip}{4pt}  
\end{equation}
Therefore, the proposed numerical scheme could be rewritten as 
\begin{equation}\label{discretescheme}
   \setlength{\abovedisplayskip}{4pt}  
    \left\{
    \begin{array}{l}
    \displaystyle
         \frac{\widetilde{\mathbf{m}}_h^n - \mathbf{m}_h^{n-1}}{\Delta t} = -{\cal I}_h \Big( \beta \mathbf{m}_h^{n-1} \times \Delta_h\widetilde{\mathbf{m}}_h^{n} + \gamma\mathbf{m}_h^{n-1} \times (\mathbf{m}_h^{n-1} \times \Delta_h \widetilde{\mathbf{m}}_h^n) \Big) ,  \\
         \displaystyle
          \mathbf{m}_h^n = {\cal I}_h\bigg(\frac{\widetilde{\mathbf{m}}_h^{n}}{|\widetilde{\mathbf{m}}_h^{n}|}\bigg),
    \end{array} \right. 
    \setlength{\belowdisplayskip}{4pt} 
\end{equation}
which can be regarded to an approximation to the LLG equation, namely, $ \mathbf{m}_t = - \beta \mathbf{m}\times \Delta \mathbf{m}-\gamma \mathbf{m} \times (\mathbf{m} \times \Delta \mathbf{m})$. 

Next, an unconditional stability analysis is provided for the constructed fully discrete numerical scheme \eqref{discretescheme1}-\eqref{discretescheme2}. Before a formal proof, the following result turns out to be an important tool to derive the original energy dissipation law.
\medskip
\begin{lemma}\label{lem2}
    Let $\{\mathbf{m}^n_h,\widetilde{\mathbf{m}}_h^{n}\}$ be the solution of \eqref{discretescheme1}-\eqref{discretescheme2} with $ |\widetilde{\mathbf{m}}^{n}_{h\,i,j}|\geq1 $ and $ |\mathbf{m}^{n}_{h\,i,j}|= 1 $. For any $1\leq n \leq N$, the following node-wise estimate is valid: 
    \begin{equation}\label{lem2eq1}
        \setlength{\abovedisplayskip}{4pt}  
        | \nabla_h\mathbf{m}^n_{h} | \leq  |\nabla_h\widetilde{\mathbf{m}}^n_h | , \quad \forall \,\,(i, \,j) \in \mathcal{M}_h^3,
        \setlength{\belowdisplayskip}{4pt} 
    \end{equation}
    which in turn indicates that $\displaystyle \|\nabla \mathbf{m}^n_h\|_{L^2} \leq \|\nabla \widetilde{\mathbf{m}}^n_h\|_{L^2}$.
\end{lemma}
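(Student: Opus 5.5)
The plan is to reduce the statement to an elementary fact about the radial projection $\mathbf{x}\mapsto \mathbf{x}/|\mathbf{x}|$ restricted to the exterior of the unit ball in $\mathbb{R}^3$. Then we will use that, for piecewise linear functions on the right-triangular mesh, the $L^2$ norm of the gradient is a weighted sum of squared edge difference quotients.

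\textbf{Step 1 (contraction of the projection).} We will show that for any $\mathbf{a},\mathbf{b}\in\mathbb{R}^3$ with $|\mathbf{a}|\ge 1$ and $|\mathbf{b}|\ge1$,
\begin{equation*}
\Big|\frac{\mathbf{a}}{|\mathbf{a}|}-\frac{\mathbf{b}}{|\mathbf{b}|}\Big|\le |\mathbf{a}-\mathbf{b}|.
\end{equation*}
Expanding both sides, this is equivalent to $2-2\frac{\mathbf{a}\cdot\mathbf{b}}{|\mathbf{a}||\mathbf{b}|}\le |\mathbf{a}|^2+|\mathbf{b}|^2-2\mathbf{a}\cdot\mathbf{b}$. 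Writing $\mathbf{a}\cdot\mathbf{b}=|\mathbf{a}||\mathbf{b}|\cos\theta$, it reduces to
\begin{equation*}
(|\mathbf{a}|-|\mathbf{b}|)^2+2(1-\cos\theta)\big(|\mathbf{a}||\mathbf{b}|-1\big)\ge 0.
\end{equation*}
This holds because $|\mathbf{a}||\mathbf{b}|\ge1$. The hypothesis $|\widetilde{\mathbf{m}}^n_{h\,i,j}|\ge1$, established just before the lemma via the nodewise identity $\widetilde{\mathbf{m}}^n_h\cdot\mathbf{m}^{n-1}_h=1$, is exactly what makes this usable. This is the only genuinely nontrivial point; it would fail inside the unit ball.

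\textbf{Step 2 (nodewise estimate).} Apply Step 1 with $\mathbf{a}=\widetilde{\mathbf{m}}^n_{h\,i+1,j}$ and $\mathbf{b}=\widetilde{\mathbf{m}}^n_{h\,i,j}$. Since $\mathbf{m}^n_h$ equals $\widetilde{\mathbf{m}}^n_h/|\widetilde{\mathbf{m}}^n_h|$ at every node by \eqref{discretescheme2}, dividing by $h_{i+1/2}$ gives $|\nabla^x_h\mathbf{m}^n_{h}|\le|\nabla^x_h\widetilde{\mathbf{m}}^n_h|$ at each half point. The same argument in $y$ gives $|\nabla^y_h\mathbf{m}^n_{h}|\le|\nabla^y_h\widetilde{\mathbf{m}}^n_h|$, so \eqref{lem2eq1} holds componentwise in the Frobenius sense. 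The same argument also applies to any edge difference, such as the hypotenuse direction.

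\textbf{Step 3 (passing to $\|\nabla\cdot\|_{L^2}$).} On a right triangle $K$ with legs $h_{i+1/2}$ and $\tau_{j+1/2}$, a $P_1$ function has constant gradient equal to the pair of difference quotients along its two legs. Hence
\begin{equation*}
\|\nabla \mathbf{v}_h\|^2_{L^2(K)}=|K|\big(|\nabla^x_h\mathbf{v}_h|^2+|\nabla^y_h\mathbf{v}_h|^2\big),
\end{equation*}
evaluated on the edges of $K$. Summing over $K$ and applying Step 2 termwise yields $\|\nabla\mathbf{m}^n_h\|_{L^2}\le\|\nabla\widetilde{\mathbf{m}}^n_h\|_{L^2}$.

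For general quasi-uniform triangulations, the local stiffness form is $\sum_{\text{edges}}w_e|\mathbf{v}(x_p)-\mathbf{v}(x_q)|^2$ with cotangent weights $w_e$. The edgewise contraction then transfers directly when the weights are nonnegative, for instance for non-obtuse meshes. This is the step I expect to need care outside the right-triangle model setting.
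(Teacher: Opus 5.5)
Your proof is correct and follows essentially the same route as the paper. Both use the law-of-cosines computation on each edge, relying only on $|\widetilde{\mathbf{m}}^n_h|\ge 1$ at the nodes; your identity $(|\mathbf{a}|-|\mathbf{b}|)^2+2(1-\cos\theta)(|\mathbf{a}||\mathbf{b}|-1)$ is the paper's lower bound $(\alpha_1-\alpha_2)^2$ written as an exact equality, and your Step 3 (the leg-difference formula for $P_1$ gradients on right triangles) just makes explicit what the paper compresses into ``since $\mathbf{V}_h$ is a $P^1$ element space.''
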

\begin{proof}
    For simplicity of presentation, we take \(|\nabla_h^x \mathbf{m}^n|^2\) as an example, and the same argument could be applied in the other directions. The following identity is observed: 
    \begin{equation*}
      \setlength{\abovedisplayskip}{4pt}  
        |\nabla^x_h\mathbf{m}^n_{h\,i+1/2,j}|^2 = |h_{i+1/2}^{-1}(\mathbf{m}^n_{h\,i+1,j} 
        - \mathbf{m}^n_{h\,i,j})|^2 . 
       \setlength{\belowdisplayskip}{4pt} 
    \end{equation*}
    Setting $\theta = \langle\mathbf{m}^n_{h\,i+1,j},~\mathbf{m}^n_{h\,i,j}\rangle,\,0 \leq \theta \leq \pi,~ \alpha_1 = |\widetilde{\mathbf{m}}^{n}_{h\,i+1,j}|\geq1,\,\alpha_2 = |\widetilde{\mathbf{m}}^{n}_{h\,i,j}| \geq 1$, and using the law of cosines, we see that 
    \begin{equation}\label{lem2eq2} 
      \setlength{\abovedisplayskip}{4pt}  
        h_{i+1/2}^2|\nabla^x_h\widetilde{\mathbf{m}}_{h\,i+1/2,j}^n|^2  = \alpha_1^2 + \alpha_2^2 - 2\alpha_1\alpha_2\cos\theta, \quad 
        h_{i+1/2}^2|\nabla_h^x\mathbf{m}_{h\,i+1/2,j}^n|^2 = 2 - 2\cos\theta.  
        \setlength{\belowdisplayskip}{4pt} 
    \end{equation}
    Taking the difference between the two quantities in \eqref{lem2eq2}, and using the fact that $\alpha_l \geq 1$ with $l=1,2$, it is clear that 
    \begin{equation*} 
    \setlength{\abovedisplayskip}{4pt}  
    \aligned
        &h_{i+1/2}^2(|\nabla_h^x\widetilde{\mathbf{m}}^n_{h\,i+1/2,j}|^2 - |\nabla_h^x\mathbf{m}^n_{h\,i+1/2,j}|^2)\\
        = &(\alpha_1^2 - 1) + (\alpha^2_2 - 1)-2\cos\theta(\alpha_1\alpha_2-1) 
        \ge  (\alpha_1 - \alpha_2)^2 \ge  0.
    \endaligned
    \setlength{\abovedisplayskip}{4pt}  
    \end{equation*}
    Subsequently, a summation over both directions leads to \eqref{lem2eq1}. Moreover, since $\mathbf{V}_h$ is a $P^1$ element space, we are able to get the desired inequality: 
    \begin{equation*}
    \setlength{\abovedisplayskip}{4pt}  
        \|\nabla \mathbf{m}^n_h\|_{L^2} \leq \|\nabla \widetilde{\mathbf{m}}^n_h\|_{L^2} . 
     \setlength{\belowdisplayskip}{4pt} 
    \end{equation*} 
    This finishes the proof of Lemma~\ref{lem2}. 
\end{proof}

With the help of the established bound in Lemma \ref{lem2}, we are able to derive an unconditional energy dissipation law and stability estimate.

\medskip
\begin{theorem}\label{stabilitythm}
    Let $\{\mathbf{m}^n,\widetilde{\mathbf{m}}^{n}\}$ be the solution of \eqref{discretescheme1}-\eqref{discretescheme2}. We have the following unconditional energy dissipation law, as well as a uniform $H^1$ bound for the numerical solution:  
    \begin{align} 
     \setlength{\abovedisplayskip}{4pt}   
      & 
         \frac{1}{2}\|\nabla \mathbf{m}^{n}_h\|^2_{L^2} -\frac{1}{2}\|\nabla \mathbf{m}^{n-1}_h\|^2_{L^2} \leq -\gamma \Delta t\|\mathbf{m}^{n-1}_h \times \Delta_h \widetilde{\mathbf{m}}^{n}_h\|^2_{L^2_h},\,\,\forall \,1\leq n\leq N ,  \\ 
      & \frac{1}{2}\|\nabla \mathbf{m}^l_h\|_{L^2}^2 +  \frac{1}{2}\|\nabla \mathbf{\widetilde{m}}_h^l\|_{L^2}^2 + \gamma\Delta t\sum^{l}_{n=1}\|\mathbf{m}_h^{n-1} \times \Delta_h\widetilde{\mathbf{m}}_h^n\|_{L^2_h}^2 \leq \|\nabla \mathbf{m}_h^0\|^2_{L^2}. 
      \setlength{\belowdisplayskip}{4pt}  
    \end{align}    
\end{theorem}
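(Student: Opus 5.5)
The plan is to test the scheme with $\mathbf{v}_h = -\Delta_h \widetilde{\mathbf{m}}^n_h \in \mathbf{V}_h$, working in the nodal (rewritten) form \eqref{discretescheme}. Taking the lumped inner product of the first equation of \eqref{discretescheme} with $-\Delta_h\widetilde{\mathbf{m}}^n_h$ gives the left-hand side
\[
-\frac{1}{\Delta t}(\widetilde{\mathbf{m}}_h^n - \mathbf{m}_h^{n-1},\Delta_h\widetilde{\mathbf{m}}_h^n)_h = \frac{1}{\Delta t}(\nabla(\widetilde{\mathbf{m}}_h^n - \mathbf{m}_h^{n-1}),\nabla\widetilde{\mathbf{m}}_h^n),
\]
where we use the definition \eqref{A_h-def}. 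Because $(\cdot,\cdot)_h$ only involves nodal values, the interpolation $\mathcal{I}_h$ on the right-hand side can be dropped. Nodewise, $(\mathbf{m}\times \mathbf{a})\cdot\mathbf{a}=0$ kills the $\beta$ term. The triple-product identity gives $-(\mathbf{m}\times(\mathbf{m}\times\mathbf{a}))\cdot\mathbf{a}=|\mathbf{m}\times\mathbf{a}|^2$, so the $\gamma$ term becomes $-\gamma\|\mathbf{m}^{n-1}_h\times\Delta_h\widetilde{\mathbf{m}}^n_h\|_{L^2_h}^2$ after moving it to the correct side with the right sign. We therefore obtain
\[
(\nabla(\widetilde{\mathbf{m}}_h^n - \mathbf{m}_h^{n-1}),\nabla\widetilde{\mathbf{m}}_h^n) = -\gamma\Delta t\|\mathbf{m}^{n-1}_h\times\Delta_h\widetilde{\mathbf{m}}^n_h\|_{L^2_h}^2 .
\]
Care is needed in checking that the nodal form \eqref{discretescheme} is genuinely equivalent to \eqref{discretescheme1} after testing, via \eqref{discrete1}; that equivalence is already established in the text.

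Next, apply the polarization identity $2(a-b,a)=|a|^2-|b|^2+|a-b|^2$ to get
\[
\tfrac12\|\nabla\widetilde{\mathbf{m}}^n_h\|^2-\tfrac12\|\nabla\mathbf{m}^{n-1}_h\|^2+\tfrac12\|\nabla(\widetilde{\mathbf{m}}^n_h-\mathbf{m}^{n-1}_h)\|^2 = -\gamma\Delta t\|\mathbf{m}^{n-1}_h\times\Delta_h\widetilde{\mathbf{m}}^n_h\|_{L^2_h}^2 .
\]
Dropping the nonnegative increment term, we then invoke Lemma \ref{lem2}. Its hypotheses $|\widetilde{\mathbf{m}}^n_h(\mathbf{x}_i)|\ge1$ and $|\mathbf{m}^n_h(\mathbf{x}_i)|=1$ follow from the nodal orthogonality relation established just before \eqref{discretescheme}, by induction on $n$ starting from $|\mathbf{m}^0_h(\mathbf{x}_i)|=1$. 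The lemma gives $\|\nabla\mathbf{m}^n_h\|\le\|\nabla\widetilde{\mathbf{m}}^n_h\|$, and replacing $\widetilde{\mathbf{m}}$ by $\mathbf{m}$ on the left yields the first inequality.

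For the uniform bound, sum the first inequality from $n=1$ to $l$ and telescope to get
\[
\tfrac12\|\nabla\mathbf{m}^l_h\|^2+\gamma\Delta t\sum_{n=1}^l\|\mathbf{m}^{n-1}_h\times\Delta_h\widetilde{\mathbf{m}}^n_h\|^2_{L^2_h}\le\tfrac12\|\nabla\mathbf{m}^0_h\|^2 .
\]
To add $\tfrac12\|\nabla\widetilde{\mathbf{m}}^l_h\|^2$, use the unreduced identity at step $l$. It gives $\tfrac12\|\nabla\widetilde{\mathbf{m}}^l_h\|^2\le\tfrac12\|\nabla\mathbf{m}^{l-1}_h\|^2\le\tfrac12\|\nabla\mathbf{m}^0_h\|^2$, the last step by the monotonicity just shown. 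Adding this to the telescoped inequality gives the stated bound with constant $\|\nabla\mathbf{m}^0_h\|^2$.

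The main obstacle is Lemma \ref{lem2}. It is proved via edgewise differences on structured right-triangle meshes, and converting it into the $L^2$ gradient inequality for $P_1$ functions relies on the gradient on each triangle being determined by edge differences. This is clean for right triangles but needs the mesh assumptions from \cite{bank2003asymptotically} in general; I would take the lemma as given, as the paper does.
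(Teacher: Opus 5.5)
Your proposal is correct and follows the paper's proof. You test the scheme with $-\Delta_h\widetilde{\mathbf{m}}_h^n$, drop the $\beta$ term by orthogonality, identify the $\gamma$ term as $-\gamma\|\mathbf{m}_h^{n-1}\times\Delta_h\widetilde{\mathbf{m}}_h^n\|_{L^2_h}^2$, pass from $\widetilde{\mathbf{m}}_h^n$ to $\mathbf{m}_h^n$ via Lemma~\ref{lem2}, and sum in time; your explicit polarization identity, the induction verifying the nodal hypotheses of Lemma~\ref{lem2}, and the separate bound $\tfrac12\|\nabla\widetilde{\mathbf{m}}_h^l\|_{L^2}^2\le\tfrac12\|\nabla\mathbf{m}_h^0\|_{L^2}^2$ fill in details the paper leaves implicit.
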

\begin{proof}
    Based on the identity $(\boldsymbol{a}\times \boldsymbol{b},\boldsymbol{b}) = 0$, it is clear that, taking $\mathbf{v}_h = -\Delta_h\widetilde{\mathbf{m}}^n_h$ in \eqref{discrete1} gives 
    \begin{equation*}
      \setlength{\abovedisplayskip}{4pt}   
        (\frac{\widetilde{\mathbf{m}}^n_h - \mathbf{m}^{n-1}_h}{\Delta t}, -\Delta_h\widetilde{\mathbf{m}}^{n}_h)_h = -\gamma (\mathcal{I}_h(\mathbf{m}^{n-1}_h \times (\mathbf{m}^{n-1}_h \times \Delta_h\widetilde{\mathbf{m}}^{n}_h)),-\Delta_h\widetilde{\mathbf{m}}^{n}_h)_h . 
      \setlength{\belowdisplayskip}{4pt}  
    \end{equation*}
    Moreover, by recalling \eqref{A_h-def} and Lemma \ref{lem2}, we get an original energy dissipation estimate: 
    \begin{equation*}
      \setlength{\abovedisplayskip}{4pt}  
        \frac{1}{2}\|\nabla \mathbf{m}^{n}_h\|^2_{L^2} \leq \frac{1}{2}\|\nabla \widetilde{\mathbf{m}}^n_h\|^2_{L^2} + \gamma \Delta t\|\mathbf{m}^{n-1}_h \times \Delta_h \widetilde{\mathbf{m}}^{n}_h\|^2_{L^2_h} \leq \frac{1}{2}\|\nabla \mathbf{m}^{n-1}_h\|^2_{L^2} . 
       \setlength{\belowdisplayskip}{4pt}  
    \end{equation*}
    A summation of this inequality from $n = 1$ to $n = l$ leads to the desired results.
\end{proof}

\textcolor{black}{The uniqueness of the solution to scheme \eqref{discretescheme1} can be naturally derived in the stability results of Theorem \ref{stabilitythm}, which implies the existence and uniqueness of the solution for the above scheme.
\medskip
\begin{remark}
It is observed that the stability results established in Theorem \ref{stabilitythm} could be extended to general triangular mesh partitions. As pointed out in \cite{alouges2008new}, the following inequality holds under the Delaunay-type condition in the two-dimensional case:
\begin{equation}
    \setlength{\abovedisplayskip}{4pt}  
    \int_\Omega \Big| \nabla \mathcal{I}_h \Big(\frac{\mathbf{v}_h}{|\mathbf{v}_h|} \Big) \Big|^2 \, d {\bf x} \leq \int_\Omega|\nabla\mathbf{v}_h|^2 \, d {\bf x} , \,\,\forall\,\mathbf{v}_h \in \mathbf{V}_h .  
    \setlength{\belowdisplayskip}{4pt}  
\end{equation}
This inequality ensures that the stability analysis remains valid on general unstructured triangular meshes.
\end{remark}
}

\section{Theoretical analysis}
In this section, we present an optimal rate convergence analysis and error estimate for the fully discrete scheme \eqref{discretescheme1}-\eqref{discretescheme2}. Such a theoretical analysis is very challenging, due to the highly complicated behavior inherent in the constructed scheme. In the theoretical derivation for  \eqref{discretescheme}, special care must be taken to handle the term $-\gamma\mathbf{m}^{n-1}\times(\mathbf{m}^{n-1}\times \Delta_h \widetilde{\mathbf{m}}^n)$. In this work, we first give an equivalent reformulation of the original numerical system, with the help of summation by parts, so that a highly complicated term is avoided. Afterward, an optimal rate convergence analysis is derived, based on the linearized stability estimates.

\subsection{Equivalent discrete scheme} For any vector function $\mathbf{v}_h \in \mathbf{V}_h$, \eqref{discretescheme1} gives 
\begin{equation}\label{scheme-alt-1} 
    \setlength{\abovedisplayskip}{4pt} 
    \aligned 
    &(\frac{\widetilde{\mathbf{m}}_h^{n} - \mathbf{m}_h^{n-1}}{\Delta t},\mathbf{v}_h )_h + \gamma(\nabla \widetilde{\mathbf{m}}_h^n, \nabla \mathbf{v}_h ) \\
    = & \beta ( \nabla \widetilde{\mathbf{m}}_h^n ,  \nabla ( {\cal I}_h (\mathbf{v}_h \times \mathbf{m}_h^{n-1}) ) )   + \gamma (\nabla \mathbf{m}_h^{n-1} + \nabla (\widetilde{\mathbf{m}}_h^n-\mathbf{m}_h^{n-1}) , \nabla ( {\cal I}_h ( (\mathbf{m}_h^{n-1}\cdot \mathbf{v}_h ) \mathbf{m}_h^{n-1} ) )  , 
    \endaligned 
    \setlength{\belowdisplayskip}{4pt}  
\end{equation} 
in which the definition~\eqref{A_h-def} for the weak Laplacian operator $\Delta_h$ has been repeatedly applied, and the interpolation operator has played an essential role in the derivation. 

Moreover, since a mass lumping element is used, for $f_h$, $g_h \in \mathbf{V}_h$, we would like to express their $H^1$ inner product in terms of the associated finite difference approximations at each element cell. Such an expression will play an important role in the later analysis. For simplicity of presentation, we focus on a right triangle element, with a right angle located at $(i, j)$, and two other vertices at $(i+1, j)$, $(i, j+1)$, respectively. The other triangular element could be similarly analyzed. If $f_h$ and $g_h$ have interpolation values of $f_{i,j}$, $f_{i+1,j}$, $f_{i,j+1}$, $g_{i,j}$, $g_{i+1,j}$, $g_{i,j+1}$ at the vertices, the $H^1$ inner product over the triangular cell $K_{i,j}$, as well as the one over the whole domain, turns out to be 
\begin{equation} 
\setlength{\abovedisplayskip}{4pt}  
\begin{aligned} 
  & 
  ( \nabla f_h, \nabla g_h )_{K_{i,j}} = | K_{i,j} | ( ( \nabla^x_h f)_{i+\frac12,j} (\nabla^x_h g)_{i+\frac12, j} 
  + ( \nabla^y_h f)_{i, j+\frac12} (\nabla^y_h g)_{i, j+\frac12}  )  ,    
\\ 
   & 
   ( \nabla f_h, \nabla g_h ) = \sum_{K \in \mathcal{K}_h} ( \nabla f_h, \nabla g_h )_{K}  . 
\end{aligned} 
\setlength{\belowdisplayskip}{4pt}  
  \label{H1 inner product-1} 
\end{equation} 
In turn, the following finite difference operators are introduced at each triangular mesh area, to simplify the notations in the later sections: 
\begin{equation} 
\setlength{\abovedisplayskip}{4pt}  
\begin{aligned} 
  & 
  ( \nabla_h f_h \cdot \nabla_h g_h )_{K_{i,j}}:= ( \nabla^x_h f_h  \nabla^x_h g_h )_{i+\frac12, j} 
  + ( \nabla^y_h f_h \nabla^y_h g_h )_{i, j+\frac12}  ,  
\\
  &  
  ( \nabla f_h, \nabla g_h )_{K_{i,j}}  = | K_{i,j} |  ( \nabla_h f_h \cdot \nabla_h g_h )_{K_{i,j}} . 
\end{aligned} 
\setlength{\belowdisplayskip}{4pt}  
  \label{H1 inner product-2} 
\end{equation} 

On the other hand, since the interpolation operator has been deeply involved in the evaluation of the nonlinear inner product, a detailed expansion of $\nabla ( {\cal I}_h ( f_h g_h))$ is needed, for $f_h$, $g_h \in \mathbf{V}_h$. It is clear that ${\cal I}_h ( f_h g_h)$ is the interpolated function in the finite element space, with three vertices values of $(f g)_{i,j}$, $(f g)_{i+1,j}$, $(f g)_{i,j+1}$. In turn, its gradient over the cell $K_{i,j}$ becomes a constant vector: 
\begin{equation} 
\begin{aligned} 
  & 
  \partial_x ( {\cal I}_h ( f_h g_h) ) \equiv \frac{(fg)_{i+1, j} - (fg)_{i,j}}{h_{i+1/2}} = \nabla^x_h (fg)_{i+\frac12, j} 
  = {\cal A}_x f_{i+\frac12, j} \nabla^x_h g_{i+\frac12, j} + {\cal A}_x g_{i+\frac12, j} \nabla^x_h f_{i+\frac12, j} , 
\\
  & 
  \partial_y ( {\cal I}_h ( f_h g_h) ) \equiv \frac{(fg)_{i, j+1} - (fg)_{i,j}}{\tau_{j+1/2}} = \nabla^y_h (fg)_{i, j+\frac12} 
  = {\cal A}_y f_{i, j+\frac12} \nabla^y_h g_{i, j+\frac12} + {\cal A}_y g_{i, j+\frac12} \nabla^y_h f_{i, j+\frac12} . 
\end{aligned} 
  \label{nonlinear expansion-1} 
\end{equation} 

Since both $\nabla \widetilde{\mathbf{m}}_h^n$ and $\nabla ( {\cal I}_h (\mathbf{v}_h \times \mathbf{m}_h^{n-1}) )$ are constant vectors over the triangular cell $K_{i,j}$, the nonlinear inner product for the first term on the right hand side of~\eqref{scheme-alt-1} could be expanded as follows, over the triangular cell $K_{i,j}$: 
\begin{equation} 
  \setlength{\abovedisplayskip}{4pt}  
\begin{aligned} 
  & 
   ( \nabla \widetilde{\mathbf{m}}_h^n ,  \nabla ( {\cal I}_h (\mathbf{v}_h \times \mathbf{m}_h^{n-1}) ) )_{K_{i,j}} 
   = | K_{i,j} | ( \nabla_h \widetilde{\mathbf{m}}^n \cdot \nabla_h ( \mathbf{v}_h \times \mathbf{m}_h^{n-1}) )_{K_{i,j}} 
\end{aligned} 
  \setlength{\belowdisplayskip}{4pt}  
   \label{nonlinear expansion-3-1} 
\end{equation} 
In more details, based on the finite difference expansion formula~\eqref{nonlinear expansion-1} for the nonlinear product over a triangular mesh, we observe that the discrete gradient operator could be expressed as follows: 
\begin{equation} 
  \setlength{\abovedisplayskip}{4pt}  
\begin{aligned} 
  & 
   \nabla_h ( \mathbf{v}_h \times \mathbf{m}^{n-1}_h)_{K_{i,j}}  
   = \left( \begin{array}{l} 
    \nabla^x_h ( \mathbf{v}_h \times \mathbf{m}^{n-1}_h)_{i+\frac12,j} \\ 
    \nabla^y_h ( \mathbf{v}_h \times \mathbf{m}^{n-1}_h)_{i,j+\frac12} 
    \end{array} \right)   
    = ( {\cal A}_h \mathbf{v}_h \times \nabla_h \mathbf{m}_h^{n-1} 
    + \nabla_h \mathbf{v}_h \times {\cal A}_h \mathbf{m}_h^{n-1} )_{K_{i,j}} 
\\
  = & 
    \left( \begin{array}{l} 
    ( {\cal A}_x \mathbf{v}_h \times \nabla^x_h \mathbf{m}_h^{n-1} 
    + \nabla^x_h \mathbf{v}_h \times {\cal A}_x \mathbf{m}^{n-1}_h )_{i+\frac12,j} \\ 
    ( {\cal A}_y \mathbf{v}_h \times \nabla^y_h \mathbf{m}_h^{n-1} 
    + \nabla^y_h \mathbf{v}_h \times {\cal A}_y \mathbf{m}^{n-1}_h)_{i,j+\frac12}  
    \end{array} \right)  ,
\end{aligned} 
  \setlength{\belowdisplayskip}{4pt}  
\end{equation}
so that
\begin{equation} 
  \setlength{\abovedisplayskip}{2pt}  
\begin{aligned}
  & 
  ( \nabla \widetilde{\mathbf{m}}_h^n ,  \nabla ( {\cal I}_h (\mathbf{v}_h \times \mathbf{m}_h^{n-1}) ) )  
   = \sum_{i,j} | K_{i,j} | ( \nabla_h \widetilde{\mathbf{m}}_h^n \cdot \nabla_h ( \mathbf{v}_h \times \mathbf{m}^{n-1}_h) )_{K_{i,j}} 
\\
  = & 
  \sum_{K \in \mathcal{K}_h} | K| ( \nabla_h \widetilde{\mathbf{m}}_h^n \cdot 
  ( {\cal A}_h \mathbf{v}_h \times \nabla_h \mathbf{m}_h^{n-1} 
    + \nabla_h \mathbf{v}_h \times {\cal A}_h \mathbf{m}^{n-1}_h ) )_{K} . 
\end{aligned} 
 \setlength{\belowdisplayskip}{2pt}  
   \label{nonlinear expansion-3-2} 
\end{equation} 

The other two nonlinear inner product terms could be similarly expanded in terms of the finite difference approximation over each finite element cell: 
\begin{equation} 
  \setlength{\abovedisplayskip}{4pt}  
\begin{aligned} 
  & 
  (\nabla \mathbf{m}_h^{n-1} , 
  \nabla ( {\cal I}_h ( (\mathbf{m}_h^{n-1}\cdot \mathbf{v}_h ) \mathbf{m}_h^{n-1} ) )  
   = \sum_{i,j} | K_{i,j} | (\nabla_h \mathbf{m}^{n-1}_h \cdot 
  \nabla_h ( (\mathbf{m}^{n-1}_h\cdot \mathbf{v}_h ) \mathbf{m}_h^{n-1} ) )_{K_{i,j}}  
\\
  = & 
     \sum_{K \in \mathcal{K}_h} |K| (\nabla_h \mathbf{m}^{n-1}_h \cdot  
   ( {\cal A}_h (\mathbf{m}_h^{n-1}\cdot \mathbf{v}_h ) \nabla_h \mathbf{m}^{n-1}_h 
    + \nabla_h (\mathbf{m}_h^{n-1}\cdot \mathbf{v}_h )  {\cal A}_h \mathbf{m}^{n-1}_h ) )_{K} , 
\end{aligned} 
  \setlength{\belowdisplayskip}{4pt}  
   \label{nonlinear expansion-4} 
\end{equation} 
\begin{equation}
  \setlength{\abovedisplayskip}{4pt}   
\begin{aligned} 
  & 
  ( \nabla (\widetilde{\mathbf{m}}_h^n-\mathbf{m}_h^{n-1}) , 
  \nabla ( {\cal I}_h ( (\mathbf{m}_h^{n-1}\cdot \mathbf{v}_h ) \mathbf{m}_h^{n-1} ) ) 
\\
  = & 
    \sum_{K \in \mathcal{K}_h} |K| (\nabla_h ( \widetilde{\mathbf{m}}^n_h - \mathbf{m}^{n-1}_h ) \cdot  
   ( {\cal A}_h (\mathbf{m}^{n-1}_h\cdot \mathbf{v}_h ) \nabla_h \mathbf{m}^{n-1}_h 
    + \nabla_h (\mathbf{m}_h^{n-1}\cdot \mathbf{v}_h )  {\cal A}_h \mathbf{m}^{n-1}_h ) )_{K} . 
\end{aligned} 
  \setlength{\belowdisplayskip}{4pt}  
   \label{nonlinear expansion-5} 
\end{equation} 

Meanwhile, two nonlinear inner product terms in the above expansions would either disappear or be transformed into a rewritten form, as stated in the following lemma. These two equalities will play a crucial role in the convergence analysis.

\medskip
\begin{lemma}\label{lem4}
    Assume $\{\mathbf{m}_h^n,\widetilde{\mathbf{m}}^{n}_h\}$ is the solution of \eqref{discretescheme1}-\eqref{discretescheme2}, satisfying  
    $$|\mathbf{m}_{h}^{n-1}|^2=1,\,\,\mathbf{m}^{n-1}_h \cdot (\widetilde{\mathbf{m}}^n_h -\mathbf{m}^{n-1}_h) =0,\,\,\text{at each node point.}$$ 
     Then the following equalities are valid, at each half-point $(i+\frac{1}{2},j)$ or $(i,j+\frac{1}{2})$:  
    \begin{align}
      \setlength{\abovedisplayskip}{4pt}  
        {\cal A}_h\mathbf{m}_h^n \cdot(\nabla_h \mathbf{m}_h^n) &=  0 , 
        \label{lem4eq1}\\
        {\cal A}_h\mathbf{m}_h^{n-1} \cdot \nabla_h(\widetilde{\mathbf{m}}_h^n -\mathbf{m}_h^{n-1}) 
       &=  - {\cal A}_h(\widetilde{\mathbf{m}}_h^n -\mathbf{m}_h^{n-1})\cdot\nabla_h\mathbf{m}^{n-1}_h .  
       \label{lem4eq2} 
       \setlength{\belowdisplayskip}{4pt}  
    \end{align}
\end{lemma}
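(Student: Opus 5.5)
Both identities are purely algebraic, node-pair statements. At a half-point, ${\cal A}_h$ is the arithmetic average of the two adjacent nodal values, and $\nabla_h$ is their difference divided by the local spacing. I will therefore work at a single half-point $(i+\frac12,j)$, with $\mathbf{a}=\mathbf{m}_{h\,i,j}$ and $\mathbf{b}=\mathbf{m}_{h\,i+1,j}$ and spacing $h_{i+1/2}$. The $y$-direction case $(i,j+\frac12)$ is identical with $\tau_{j+1/2}$ in place of $h_{i+1/2}$.

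For \eqref{lem4eq1}, I take $\mathbf{a}=\mathbf{m}^n_{h\,i,j}$ and $\mathbf{b}=\mathbf{m}^n_{h\,i+1,j}$. Then
\[
{\cal A}_x\mathbf{m}^n_h\cdot\nabla^x_h\mathbf{m}^n_h=\frac{(\mathbf{a}+\mathbf{b})\cdot(\mathbf{b}-\mathbf{a})}{2h_{i+1/2}}=\frac{|\mathbf{b}|^2-|\mathbf{a}|^2}{2h_{i+1/2}} .
\]
This vanishes because the renormalization step \eqref{discretescheme2} gives $|\mathbf{m}^n_h|=1$ at every node. The stated hypothesis is phrased for index $n-1$, but it holds for every time level by construction of the scheme, so I will apply it at level $n$.

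For \eqref{lem4eq2}, I set $\mathbf{e}=\widetilde{\mathbf{m}}^n_h-\mathbf{m}^{n-1}_h$ and write its nodal values as $\mathbf{e}_i,\mathbf{e}_{i+1}$, with $\mathbf{a}=\mathbf{m}^{n-1}_{h\,i,j}$ and $\mathbf{b}=\mathbf{m}^{n-1}_{h\,i+1,j}$. By the discrete product rule stated in Section 2,
\[
{\cal A}_h\mathbf{m}^{n-1}_h\cdot\nabla_h\mathbf{e}+{\cal A}_h\mathbf{e}\cdot\nabla_h\mathbf{m}^{n-1}_h=\nabla_h(\mathbf{m}^{n-1}_h\cdot\mathbf{e}) .
\]
The right-hand side is the difference quotient of the nodal values of $\mathbf{m}^{n-1}_h\cdot\mathbf{e}$. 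These values vanish at every node by the hypothesis $\mathbf{m}^{n-1}_h\cdot(\widetilde{\mathbf{m}}^n_h-\mathbf{m}^{n-1}_h)=0$, which is the second identity already derived after \eqref{discrete1}. Hence the right-hand side is zero, and \eqref{lem4eq2} follows.

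As a check, expanding directly gives $\frac{1}{2h}[(\mathbf{a}+\mathbf{b})\cdot(\mathbf{e}_{i+1}-\mathbf{e}_i)+(\mathbf{e}_i+\mathbf{e}_{i+1})\cdot(\mathbf{b}-\mathbf{a})]=\frac{1}{h}[\mathbf{b}\cdot\mathbf{e}_{i+1}-\mathbf{a}\cdot\mathbf{e}_i]=0$.

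There is no real obstacle here. The only point needing care is to confirm that ${\cal A}_h$ means the two-point average along the edge matching the difference direction, which is consistent with the expansion \eqref{nonlinear expansion-1}. Given that, the discrete product rule is exact, and both identities reduce to the nodal constraints.
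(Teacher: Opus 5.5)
Your proposal is correct and follows essentially the same route as the paper: \eqref{lem4eq1} comes from $(\mathbf{a}+\mathbf{b})\cdot(\mathbf{b}-\mathbf{a})=|\mathbf{b}|^2-|\mathbf{a}|^2$ together with nodal unit length, and \eqref{lem4eq2} comes from the exact discrete product rule applied to $\mathbf{m}^{n-1}_h\cdot(\widetilde{\mathbf{m}}^n_h-\mathbf{m}^{n-1}_h)$, which vanishes at every node. Your remark that \eqref{lem4eq1} actually uses $|\mathbf{m}^n_h|=1$, which the renormalization step guarantees, rather than the level-$(n-1)$ hypothesis as stated, correctly clarifies how the paper applies it.
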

\begin{proof}
    Since $|\mathbf{m}_h^n|=1$ at each node point, we see that 
    \begin{equation*}
      \setlength{\abovedisplayskip}{4pt}  
        {\cal A}_x \mathbf{m}^n_{h\,i+\frac12, j} \cdot \nabla^x_h \mathbf{m}^n_{h\,i+\frac12, j} = \frac{\mathbf{m}^n_{h\,i+1,j}+\mathbf{m}^n_{h\,i,j}}{2} \cdot\frac{\mathbf{m}^n_{h\,i+1,j}-\mathbf{m}^n_{h\,i,j}}{h_{i+1/2}} = \frac{|\mathbf{m}^n_{h\,i+1,j}|^2 - |\mathbf{m}^n_{h\,i,j} |^2}{2h_{i+1/2}}=0. 
        \setlength{\belowdisplayskip}{4pt}  
    \end{equation*}
A similar calculation also implies that ${\cal A}_y \mathbf{m}^n_{i, j+\frac12} \cdot \nabla^y_h \mathbf{m}^n_{i, j+\frac12}=0$, at each mesh cell. Therefore, the desired identity \eqref{lem4eq1} has been proved. Regarding the proof of \eqref{lem4eq2}, we see that the node-wise identity, $\mathbf{m}_h^{n-1} \cdot (\widetilde{\mathbf{m}}_h^n -\mathbf{m}_h^{n-1}) =0$ (at each node point) indicates the following expansion 
    \begin{equation*} 
    \setlength{\abovedisplayskip}{4pt}  
    \begin{aligned} 
     &\nabla^x_h (\mathbf{m}_h^{n-1}\cdot(\mathbf{\widetilde{\mathbf{m}}}_h^n - \mathbf{m}^{n-1}_h))_{i+\frac12, j}  
\\
  & =  ( {\cal A}_x \mathbf{m}_h^{n-1} \cdot \nabla^x_h (\widetilde{\mathbf{m}}_h^n -\mathbf{m}_h^{n-1}) )_{i+\frac12, j} 
  + ( {\cal A}_x (\widetilde{\mathbf{m}}_h^n -\mathbf{m}_h^{n-1}) \cdot \nabla^x_h \mathbf{m}_h^{n-1} )_{i+\frac12, j}= 0.
    \end{aligned} 
    \setlength{\belowdisplayskip}{4pt}  
    \end{equation*}
The derivative in the y-direction is treated analogously. In turn, a summation of these two equalities gives~\eqref{lem4eq2}, which finishes the proof of Lemma~\ref{lem4}.
\end{proof}

With the help of the two identities in Lemma~\ref{lem4}, a substitution of~\eqref{nonlinear expansion-3-2}-\eqref{nonlinear expansion-5} into \eqref{scheme-alt-1} results in 
\begin{equation}\label{m_hconsistency} 
   \setlength{\abovedisplayskip}{4pt}  
    \begin{aligned}
    \setlength{\abovedisplayskip}{4pt}  
    &(\frac{\widetilde{\mathbf{m}}_h^{n} - \mathbf{m}_h^{n-1}}{\Delta t},\mathbf{v}_h )_h + \gamma(\nabla \widetilde{\mathbf{m}}_h^n, \nabla \mathbf{v}_h ) \\ 
    = &  \sum_{K\in \mathcal{K}_h} | K | \Big( \beta ( \nabla_h \widetilde{\mathbf{m}}_h^n \cdot 
  ( {\cal A}_h \mathbf{v}_h \times \nabla_h \mathbf{m}_h^{n-1} 
    + \nabla_h \mathbf{v}_h \times {\cal A}_h \mathbf{m}_h^{n-1} ) )_{K}  \\ 
     & 
     + \gamma (\nabla_h \mathbf{m}^{n-1}_h \cdot  
   ( {\cal A}_h (\mathbf{m}^{n-1}_h\cdot \mathbf{v}_h ) \nabla_h \mathbf{m}_h^{n-1} ) 
   + \nabla_h ( \widetilde{\mathbf{m}}_h^n - \mathbf{m}_h^{n-1} ) \cdot  
   ( {\cal A}_h (\mathbf{m}^{n-1}_h\cdot \mathbf{v}_h ) \nabla_h \mathbf{m}^{n-1}_h )  )_{K}  \\
   & 
     - \gamma ( ( {\cal A}_h ( \widetilde{\mathbf{m}}_h^n - \mathbf{m}_h^{n-1} ) \cdot  
     \nabla_h \mathbf{m}_h^{n-1} )  \cdot  \nabla_h (\mathbf{m}_h^{n-1}\cdot \mathbf{v}_h ) ) _{K} \Big)  \\
     = &  \sum_{K\in\mathcal{K}_h} |K| \Big( \beta ( \nabla_h \widetilde{\mathbf{m}}_h^n \cdot 
  ( {\cal A}_h \mathbf{v}_h \times \nabla_h \mathbf{m}_h^{n-1} 
    + \nabla_h \mathbf{v}_h \times {\cal A}_h \mathbf{m}_h^{n-1} ) )_{K}  \\ 
     & 
     + \gamma \big( \nabla_h \widetilde{\mathbf{m}}_h^n \cdot  
   ( {\cal A}_h ( \mathbf{m}^{n-1}_h \cdot \mathbf{v}_h ) \nabla_h \mathbf{m}^{n-1}_h )   
    - ( {\cal A}_h ( \widetilde{\mathbf{m}}^n_h - \mathbf{m}^{n-1}_h ) \cdot  
     \nabla_h \mathbf{m}^{n-1}_h )  \cdot  \nabla_h (\mathbf{m}^{n-1}_h\cdot \mathbf{v}_h ) \big)_{K} \Big) .  
    \setlength{\belowdisplayskip}{4pt}  
    \end{aligned}
    \setlength{\belowdisplayskip}{4pt}  
\end{equation}
\subsection{Consistency analysis}
In this subsection, we derive the consistency of \eqref{m_hconsistency}, assuming a smooth exact solution. Set $\mathbf{m}_e$ as the exact solution for the PDE system, and $\mathbf{m}_{e, h} = {\cal I}_h \mathbf{m}_e$ as the interpolation of the exact solution in the finite element space. The original LLG model is equivalent to
\begin{equation} \label{originaleq} 
  \setlength{\abovedisplayskip}{4pt}   
    \begin{aligned}
        \partial_t\mathbf{m}_e - \gamma \Delta \mathbf{m}_e = -\beta\mathbf{m}_e\times \Delta \mathbf{m}_e -\gamma(\mathbf{m}_e\cdot\Delta\mathbf{m}_e)\mathbf{m}_e.
    \end{aligned}
   \setlength{\belowdisplayskip}{4pt}   
\end{equation}
Taking an inner product with \eqref{originaleq} by $\mathbf{v}_h$, and expanding it at the $n$-th layer, we get 
\begin{equation}
   \setlength{\abovedisplayskip}{4pt}    
    (\partial_t\mathbf{m}^n_e,\mathbf{v}_h) + \gamma (\nabla\mathbf{m}_e^n,\nabla\mathbf{v}_h) = -\beta(\Delta\mathbf{m}^n_e,\mathbf{v}_h\times \mathbf{m}^{n-1}_e)-\gamma(\Delta\mathbf{m}^{n}_e,(\mathbf{m}^{n-1}_e\cdot\mathbf{v}_h)\mathbf{m}_e^{n-1}) + \mathcal{E}^n_1(\mathbf{m}_e,\mathbf{v}_h), 
     \setlength{\belowdisplayskip}{4pt}   
\end{equation}
in which $\mathcal{E}_1(\mathbf{m}_e,\mathbf{v}_h)$ is bounded by
\begin{equation}\label{E1}
   \setlength{\abovedisplayskip}{4pt}   
    \Delta t \sum^{\ell}_{n=1}|\mathcal{E}^n_1(\mathbf{m}_e,\mathbf{v}_h)| \leq \epsilon\Delta t\sum^{\ell}_{n=1}\|\mathbf{v}^n_h\|^2_{H^1}+C(\| \mathbf{m}_e \|_{L^\infty(0,T;\boldsymbol{H}^2(\Omega))},\|\partial_t \mathbf{m}_e\|_{L^\infty(0,T;\boldsymbol{H}^2(\Omega))})\Delta t^2. 
     \setlength{\belowdisplayskip}{4pt}   
\end{equation}

Before proceeding with the proof of consistency and obtain an optimal rate convergence analysis, we need the following preliminary estimates.
\medskip
{\color{black}
\begin{lemma}\label{superconvergence}
     \cite[Lemma 2.5]{bank2003asymptotically}Let the triangulation $\mathcal{K}_h$ be $O(h^{2})$ irregular. For $\boldsymbol{u} \in \mathbf{W}^{3,\infty}(\Omega)$ and $\boldsymbol{v}_h \in \mathbf{V}_h$, the following super-convergence result holds for the Lagrange interpolation operator $\mathcal{I}_h$:
    \begin{equation}\label{lemsupeq}
      \setlength{\abovedisplayskip}{4pt}   
        |(\nabla(\boldsymbol{u}-\mathcal{I}_h\boldsymbol{u}),\nabla\boldsymbol{v}_h)| \lesssim h^{2}|\log h|^{1/2}\|\boldsymbol{u}\|_{W^{3,\infty}}\|\boldsymbol{v}_h\|_{H^1}. 
        \setlength{\belowdisplayskip}{4pt}   
    \end{equation}
\end{lemma}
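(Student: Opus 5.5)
The plan is the standard Bank--Xu argument: expand the error bilinear form element by element, use the approximate-parallelogram structure so that the leading $O(h)$ contributions of neighboring triangles cancel, and bound what is left by $O(h^2)$ terms and a thin boundary-layer term. Since the bilinear form $(\nabla(\boldsymbol{u}-\mathcal{I}_h\boldsymbol{u}),\nabla\boldsymbol{v}_h)$ acts componentwise, it suffices to treat a scalar $u$. Because $\nabla \boldsymbol{v}_h$ is constant on each $K$, write
$$(\nabla(u-\mathcal{I}_hu),\nabla v_h)=\sum_{K}\nabla v_h|_K\cdot\int_K\nabla(u-\mathcal{I}_hu)\,d\mathbf{x}.$$

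The first step is an elementwise error representation. For $K$ with edges $e_1,e_2,e_3$, unit tangents $\mathbf{t}_k$ and lengths $\ell_k$, integrate by parts along each edge and Taylor expand. This gives
$$\int_K\partial_{\mathbf{t}_k}(u-\mathcal{I}_hu)\,d\mathbf{x}=c\,|K|\,\ell_k^2\,\partial_{\mathbf{t}_k}^2u(\mathbf{x}_K)+O(h^3|K|)\,\|u\|_{W^{3,\infty}},$$
up to symmetric combinations of the other edges. Hence $\int_K\nabla(u-\mathcal{I}_hu)$ equals a leading term of size $h^2|K|$ built from $\partial^2_{\mathbf{t}_k}u$ at the centroid and the edge geometry of $K$, plus an $O(h^3|K|)$ remainder. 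That crude leading term alone would only give $O(h)$ after summation once it is paired with $\nabla v_h$ and a discrete inverse-type summation is applied. So the cancellation in the next step is essential.

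The second step exploits the mesh hypothesis: except for $O(h^{-1})$ triangles near a boundary layer, triangles pair into $O(h^2)$-approximate parallelograms. For a pair $K,K'$ sharing edge $e$, the leading terms have opposite orientation on corresponding edges. After applying the edge identity, the sum becomes a jump term of the form
$$\sum_e\int_e \partial^2_{\mathbf{t}}u\,[\![\partial_{\mathbf{n}}v_h]\!]\cdot(\text{geometry}).$$
The standard reformulation rewrites $\sum_K\int_K\partial_{\mathbf{t}}^2 u\,\ell^2\,\partial_{\mathbf{t}'}v_h$ as an integral of $\partial^2u$ against $\nabla v_h$ with a coefficient that is piecewise constant on parallelogram pairs. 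On each pair the geometric coefficients match up to $O(h^2)$. Integrating by parts once more moves one derivative onto $u$, which produces $\partial^3u$ and gains $h$. The mismatch terms are $O(h^2)\cdot h\cdot\|\nabla v_h\|_{L^1}$, which is bounded by $h^3\|v_h\|_{H^1}$. The interior contributions are therefore $O(h^2)\|u\|_{W^{3,\infty}}\|v_h\|_{H^1}$.

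The main obstacle is the third step: the exceptional triangles and the boundary strip where pairing fails. Their contribution has the form $h^2\|u\|_{W^{2,\infty}}\int_{S_h}|\nabla v_h|$, where $S_h$ is an $O(h)$-wide strip. By Cauchy--Schwarz this is at most $h^2\,|S_h|^{1/2}\|\nabla v_h\|_{L^2(S_h)}$. A naive bound of $\|\nabla v_h\|_{L^2(S_h)}$ by $\|v_h\|_{H^1}$ gives $h^{5/2}$. The cruder handling instead uses a trace-type estimate $\|\nabla v_h\|_{L^2(S_h)}\lesssim h^{1/2}\|\nabla v_h\|_{L^\infty}$. I would take the discrete Sobolev inequality $\|v_h\|_{L^\infty}\lesssim|\log h|^{1/2}\|v_h\|_{H^1}$ in 2D, applied after an integration by parts that moves the derivative off $v_h$ in the strip. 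This yields exactly the factor $h^2|\log h|^{1/2}$ in \eqref{lemsupeq}. For the periodic case only the finitely many exceptional triangles remain, and the same estimate applies. Summing the three contributions and the components gives \eqref{lemsupeq}.
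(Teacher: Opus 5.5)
The paper gives no proof of this lemma; it quotes Bank--Xu. Against their argument, your outline names the right ingredients but has a genuine gap in Step~2.

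Step~1 is mis-scaled: $\int_K\nabla(u-\mathcal I_hu)\,d\mathbf x=\int_{\partial K}(u-\mathcal I_hu)\,\mathbf n\,ds=-\frac1{12}\sum_k\ell_k^3\,\partial^2_{\mathbf t_k}u\,\mathbf n_k+O(h^4)$ is $O(h|K|)$, not $O(h^2|K|)$. With $h^2|K|$ there would be nothing to prove.

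More importantly, regrouping by edges gives $-\frac1{12}\sum_e\ell_e^2\int_e\partial^2_{\mathbf t_e}u\,[\![\partial_{\mathbf n}v_h]\!]\,ds$ on \emph{every} mesh, and this jump is as large as $\nabla v_h$. Equivalently, on an exact parallelogram pair the leading parts of $\int_K\nabla(u-\mathcal I_hu)$ and $\int_{K'}\nabla(u-\mathcal I_hu)$ are negatives of each other. What survives is therefore paired with $\nabla v_h|_K-\nabla v_h|_{K'}$, which is not small. A coefficient ``piecewise constant on pairs'' followed by an integration by parts onto $u$ does not repair this. It creates terms on every pair interface, and the hypothesis relates nothing across different pairs, so no power of $h$ is gained.

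The missing idea is the Bank--Xu elementwise identity, in which $v_h$ enters only through the edge differences $\delta_e=\ell_e\,\partial_{\mathbf t_e}v_h$. These are single valued up to orientation. The leading part of $\int_K\nabla(u-\mathcal I_hu)\cdot\nabla v_h$ then becomes $\sum_{e\subset\partial K}B^K_e\,\delta_e$, with $B^K_e=O(h^2)$ (geometry of $K$ times $D^2u$). Each interior edge contributes $(B^K_e-B^{K'}_e)\,\delta_e$. When $K\cup K'$ is an $O(h^2)$-approximate parallelogram (point symmetry about the midpoint of $e$), $B^K_e-B^{K'}_e=O(h^3)\|u\|_{W^{3,\infty}}$, and these terms sum to $h^2\|\nabla v_h\|_{L^1}$. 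This needs the parallelogram property across (almost) every interior edge, which is Bank--Xu's notion of $O(h^2)$ irregularity. A partition into disjoint pairs, as in the paper's prose and your Step~2, cancels only on the pairing edges. It leaves uncancelled $O(h^2)\,\delta_e$ terms on all other edges, which your accounting bounds only by $O(h)$.

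Step~3 has the same scaling slip. An uncancelled edge contributes $O(h^2)|\delta_e|\lesssim h\int_{K_e}|\nabla v_h|$, so a boundary strip gives $h\|\nabla v_h\|_{L^1(S_h)}\lesssim h^{3/2}\|\nabla v_h\|_{L^2}$. That is the real obstacle; your $h^2\int_{S_h}|\nabla v_h|$ would already be $O(h^{5/2})$ and need no logarithm. Also, $\|\nabla v_h\|_{L^\infty}$ is not controlled by the discrete Sobolev inequality, which bounds $\|v_h\|_{L^\infty}$.

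The logarithm comes from summation by parts along the chain of unpaired edges: $\sum_eB_e\delta_e=-\sum_z v_h(z)\,(B_{e^+(z)}-B_{e^-(z)})$, with $e^{\pm}(z)$ the chain edges leaving and entering $z$. Where consecutive coefficients differ by $O(h^3)$, this is $\lesssim h^2\|v_h\|_{L^1(\partial\Omega)}\lesssim h^2\|v_h\|_{H^1}$ by the trace inequality. At the finitely many points where they jump by $O(h^2)$ (corners, changes of mesh pattern), you use $h^2|v_h(z)|\le h^2\|v_h\|_{L^\infty}\lesssim h^2|\log h|^{1/2}\|v_h\|_{H^1}$. Finitely many exceptional interior triangles are harmless, each contributing $O(h^2)\|\nabla v_h\|_{L^2(K)}$.
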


Lemma \ref{superconvergence} stands for a super-convergence result for the Lagrange interpolation, which plays an important role in the truncation error estimate. Meanwhile, since our analysis relies on Lemma \ref{lem4} from the finite difference approach, it is insufficient to only use Lemma \ref{superconvergence} under a condition $\boldsymbol{v}_h \in \mathbf{V}_h$. Therefore, the following result is essential for a precise estimate for the introduced interpolation operator.
\medskip
\begin{lemma}\label{lem4.3}
    For $u \in W^{2,\infty}(\Omega)$ and $v_h \in V_h$ where \(V_h\) denotes the space of piecewise‑linear functions over \(\mathcal{K}_h\), the following interpolation result holds for the Lagrange interpolation operator $\mathcal{I}_h$:
    \begin{equation}
     \setlength{\abovedisplayskip}{4pt}   
        \|u v_h - \mathcal{I}_h(u v_h)\|_{L^1} \leq Ch^2\|u\|_{W^{2,\infty}}\|v_h\|_{H^1}.
       \setlength{\belowdisplayskip}{4pt}   
    \end{equation}
\end{lemma}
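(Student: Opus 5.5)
The plan is an element-by-element argument based on the Bramble--Hilbert lemma, using that $v_h$ is affine on each element. On a triangle $K\in\mathcal{K}_h$ the function $w:=u v_h$ is smooth, and the standard local estimate for linear Lagrange interpolation gives
\begin{equation*}
\|w-\mathcal{I}_h w\|_{L^1(K)} \le C h^2 |w|_{W^{2,1}(K)}.
\end{equation*}
On $K$, $v_h$ is linear, so $D^2 v_h=0$. The Leibniz rule therefore reduces to
\begin{equation*}
D^2(u v_h) = (D^2 u)\, v_h + 2\, \nabla u \otimes \nabla v_h ,
\end{equation*}
with no second derivatives of $v_h$ appearing. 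This pointwise identity is the key observation, because it lets us avoid any inverse inequality and hence any loss of powers of $h$.

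Next, apply H\"older on each $K$:
\begin{equation*}
|w|_{W^{2,1}(K)} \le \|D^2u\|_{L^\infty}\|v_h\|_{L^1(K)} + 2\|\nabla u\|_{L^\infty}\|\nabla v_h\|_{L^1(K)} \le \|u\|_{W^{2,\infty}}\big(\|v_h\|_{L^1(K)}+2\|\nabla v_h\|_{L^1(K)}\big).
\end{equation*}
Summing over $K\in\mathcal{K}_h$ gives
\begin{equation*}
\|u v_h-\mathcal{I}_h(uv_h)\|_{L^1} \le C h^2 \|u\|_{W^{2,\infty}} \|v_h\|_{W^{1,1}}.
\end{equation*}
Since $\Omega$ is bounded, Cauchy--Schwarz yields $\|v_h\|_{W^{1,1}}\le |\Omega|^{1/2}\|v_h\|_{H^1}$, which is the claimed bound.

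The one step needing care is the local interpolation estimate in $L^1$ with the $W^{2,1}$ seminorm. For this I would pull back to the reference triangle and use that $W^{2,1}\hookrightarrow C(\bar{\hat K})$ in two dimensions, so that $\hat{\mathcal{I}}$ is bounded on $W^{2,1}(\hat K)$. Combining this with polynomial invariance of $\hat{\mathcal I}$ on $P_1$ and the Bramble--Hilbert lemma, the affine scaling gives the factor $h^2$ with a constant depending only on the shape regularity of the quasi-uniform mesh. This is exactly the earlier Lagrange interpolation lemma (Theorem~4.4.20 of Brenner--Scott) with $p=1$, $s=1$, applied locally; in $d=2$ it lies in the borderline case $p=d/(s+1)$. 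The borderline case is still admissible for the $W^{2,1}\hookrightarrow C$ embedding in 2D, or equivalently one may argue directly on each element, where $w$ is smooth.

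For $d=3$ the embedding $W^{2,1}\hookrightarrow C$ fails, so the direct route is different. I would use the elementwise error representation: for each vertex, Taylor-expand $w$ along the segment from an interior point, which bounds $\|w-\mathcal I_h w\|_{L^\infty(K)}\le Ch^2\|D^2 w\|_{L^\infty(K)}$. Then
\begin{equation*}
\|D^2 w\|_{L^\infty(K)} \le \|u\|_{W^{2,\infty}}\big(\|v_h\|_{L^\infty(K)}+\|\nabla v_h\|_{L^\infty(K)}\big).
\end{equation*}
Multiplying by $|K|$ and using the local equivalence $|K|\,\|p\|_{L^\infty(K)}\simeq\|p\|_{L^1(K)}$ for polynomials $p$ of fixed degree recovers the same $L^1$ bound. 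This finite-dimensional norm equivalence on each element is legitimate (it is not an inverse estimate costing powers of $h$), and this version works in every dimension. I would therefore present this $L^\infty$--to--$L^1$ local argument as the main proof.
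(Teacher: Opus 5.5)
Your proposal is correct. Its first half coincides with the paper's proof: a local $W^{2,1}$ interpolation bound on each triangle, the Leibniz rule with $D^2 v_h=0$ on $K$, summation to $\|v_h\|_{W^{1,1}}$, and Cauchy--Schwarz. The paper bounds the full $W^{2,1}(K)$ norm rather than the seminorm, which is immaterial.

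Your remark about the borderline case is apt. The paper simply invokes ``the Lagrange interpolation estimate'' on each $K$. The only such result it states, its interpolation lemma with hypothesis $p>d/(s+1)$, excludes exactly $p=1$, $s=1$, $d=2$. The step is still valid because $W^{2,1}\hookrightarrow C$ in two dimensions, so your remark fills that small gap rather than changing the argument.

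The route you say you would actually present is genuinely different at its key step. Instead of Bramble--Hilbert in $W^{2,1}$, you use the pointwise estimate $\|w-\mathcal{I}_h w\|_{L^\infty(K)}\le \tfrac12 h_K^2\|D^2 w\|_{L^\infty(K)}$. This follows from barycentric Taylor expansion and is valid since $w|_K\in W^{2,\infty}(K)$. You then return to $L^1$ through three facts:
\begin{itemize}
\item the trivial bound $\|\cdot\|_{L^1(K)}\le |K|\,\|\cdot\|_{L^\infty(K)}$;
\item the affine-invariant bound $|K|\,\|v_h\|_{L^\infty(K)}\le C\|v_h\|_{L^1(K)}$;
\item the identity $|K|\,\|\nabla v_h\|_{L^\infty(K)}=\|\nabla v_h\|_{L^1(K)}$, since $\nabla v_h$ is constant on $K$.
\end{itemize}

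This version relies on $u\in W^{2,\infty}$ and on $v_h$ being a polynomial, not on any Sobolev embedding. It therefore works verbatim on tetrahedra, where the $W^{2,1}$ interpolation estimate fails for general functions in $W^{2,1}(K)\cap C(\bar K)$. Its constant does not even depend on shape regularity. The paper's version is shorter and purely textbook, but it is tied to $d=2$ and to the borderline case.

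One small slip: your bound on $\|D^2 w\|_{L^\infty(K)}$ drops the factor $2$ on the cross term $\nabla u\otimes\nabla v_h$. This is harmlessly absorbed into $C$.
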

\begin{proof}
    Over each triangular mesh $K\in \mathcal{K}_h$, the Lagrange interpolation estimate gives 
    \begin{equation}\label{eq4.18} 
     \setlength{\abovedisplayskip}{4pt}   
        \|u v_h - \mathcal{I}_h(u v_h)\|_{L^1(K)} \leq Ch^2\|u v_h\|_{W^{2,1}(K)}, 
        \setlength{\belowdisplayskip}{4pt}   
    \end{equation}
    where $|\textbf{v}_h|_{H^2(K)} = 0$. In turn, the following bound becomes available: 
    \begin{equation}\label{eq4.19}
     \setlength{\abovedisplayskip}{4pt}   
    \begin{aligned}
         \|u v_h\|_{W^{2,1}(K)} 
         &\leq C(\|(\nabla^2 u) v_h\|_{L^1(K)}+\|\nabla u\nabla v_h \|_{L^1(K)} + \|u (\nabla v_h)\|_{L^1(K)} \\
         &~~~~~~~~~~+ \|(\nabla u  ) v_h\|_{L^1(K)} + \|u v_h \|_{L^1(K)} )\\
         &\leq C(\|u\|_{W^{2,\infty}(K)}\|v_h\|_{L^{1}(K)} + \|u\|_{W^{1,\infty}(K)}\|\nabla v_h\|_{L^1(K)})\\
         &\leq C\|u\|_{W^{2,\infty}(K)}\|v_h\|_{W^{1,1}(K)},
    \end{aligned} 
     \setlength{\belowdisplayskip}{4pt}   
    \end{equation}
    with the help of the Sobolev interpolation inequality. A substitution of \eqref{eq4.19} into \eqref{eq4.18} and summing over all triangular elements gives 
    \begin{equation*}
     \setlength{\abovedisplayskip}{4pt}   
         \|u v_h - \mathcal{I}_h(u v_h)\|_{L^1(\Omega)} \leq Ch^2\|u\|_{W^{2,\infty}(\Omega)}\|v_h\|_{W^{1,1}(\Omega)}\leq Ch^2\|u\|_{W^{2,\infty}(\Omega)}\|v_h\|_{H^1(\Omega)},
      \setlength{\belowdisplayskip}{4pt}   
    \end{equation*}
    which yields the desired conclusion.
\end{proof}
}

To obtain a consistency error similar to that in \eqref{m_hconsistency}, we see that 
\begin{align}
  \setlength{\abovedisplayskip}{4pt} 
    &\Big( \frac{\mathbf{m}^n_{e,h} - \mathbf{m}^{n-1}_{e,h}}{\Delta t},\mathbf{v}_h \Big)_h + \gamma(\nabla \mathbf{m}^n_{e,h},\nabla \mathbf{v}_h) \notag\\
    =& \beta(\nabla \mathbf{m}^{n}_{e,h},\nabla\mathcal{I}_h(\mathbf{v}_h \times \mathbf{m}^{n-1}_e)) + \gamma(\nabla \mathbf{m}^{n}_{e,h},\nabla\mathcal{I}_h((\mathbf{m}^{n-1}_{e}\cdot\mathbf{v}_h)\mathbf{m}^{n-1}_e))+\mathcal{E}^n_1(\mathbf{m}_e,\mathbf{v}_h)\notag\\
    &\left.
    \begin{aligned}
        &-\beta(\Delta \mathbf{m}^n_e,\mathbf{v}_h\times \mathbf{m}^{n-1}_e-\mathcal{I}_h(\mathbf{v}_h\times \mathbf{m}^{n-1}_e)) \\
        &-\gamma(\Delta \mathbf{m}^{n}_e,(\mathbf{m}^{n-1}_{e}\cdot\mathbf{v}_h)\mathbf{m}^{n-1}_e-\mathcal{I}_h((\mathbf{m}^{n-1}_{e}\cdot\mathbf{v}_h)\mathbf{m}^{n-1}_e))
    \end{aligned}
    \right\} \mathcal{E}^n_2(\mathbf{m}_e,\mathbf{v}_h) \notag\\
    &\left.
    \begin{aligned}
        &+ \beta(\nabla(\mathbf{m}^{n}_e - \mathbf{m}^{n}_{e,h}),\nabla\mathcal{I}_h(\mathbf{v}_h\times \mathbf{m}^{n-1}_e)) \\
        &+\gamma (\nabla(\mathbf{m}^n_e - \mathbf{m}^n_{e,h}),\nabla(\mathcal{I}_h((\mathbf{m}^{n-1}_{e}\cdot\mathbf{v}_h)\mathbf{m}^{n-1}_e) - \mathbf{v}_h))
    \end{aligned}
    \right\} \mathcal{E}^n_3(\mathbf{m}_e,\mathbf{v}_h)\label{consiseq}\\
    &\left.+(\frac{\mathbf{m}^n_{e,h} - \mathbf{m}^{n-1}_{e,h}}{\Delta t},\mathbf{v}_h)_h - (\frac{\mathbf{m}^n_{e,h} - \mathbf{m}^{n-1}_{e,h}}{\Delta t},\mathbf{v}_h)\right\}\mathcal{E}^n_4(\mathbf{m}_e,\mathbf{v}_h)\notag\\
    &\left.+(\frac{\mathbf{m}^n_{e,h} - \mathbf{m}^{n-1}_{e,h}}{\Delta t}-\partial_t\mathbf{m}^{n}_e,\mathbf{v}_h)\right\}\mathcal{E}^n_5(\mathbf{m}_e,\mathbf{v}_h) . \notag
    \setlength{\belowdisplayskip}{4pt}
\end{align}
Meanwhile, the denoted terms could be bounded as follows: 
\begin{align}
  \setlength{\abovedisplayskip}{4pt} 
    & 
    |\mathcal{E}^n_2(\mathbf{m}_e,\mathbf{v}_h)| \leq \epsilon \|\mathbf{v}_h\|^2_{H^1} + C(\|\mathbf{m}^n_e\|_{W^{{2,\infty}}})h^4 ,  \quad \mbox{(by Lemma \ref{lem4.3})} ,  \label{E2} 
\\
   & 
    |\mathcal{E}^n_3(\mathbf{m}_e,\mathbf{v}_h)| \leq \epsilon \|\mathbf{v}_h\|^2_{H^1} + C(\|\mathbf{m}^n_e\|_{W^{3,\infty}})h^4 ,  \quad \mbox{(by Lemma \ref{superconvergence})} ,  \label{E3} 
\\
  & 
    |\mathcal{E}^n_4(\mathbf{m}_e,\mathbf{v}_h)| \leq \epsilon \|\mathbf{v}_h\|^2_{H^1} + C(\|\partial_t\mathbf{m}^n_e\|_{H^1})h^4 , \quad \mbox{(by Lemma \ref{lemlumped})} , \label{E4} 
\\
  & 
    |\mathcal{E}^n_5(\mathbf{m}_e,\mathbf{v}_h)| \leq \epsilon \|\mathbf{v}_h\|^2_{L^2} + C(\|\partial_{tt}\mathbf{m}^n_e\|_{L^2})\Delta t^2 , \quad \mbox{(by the Taylor expansion in time)} .  \label{E5} 
  \setlength{\belowdisplayskip}{4pt}
\end{align} 
A careful interpolation and node-wise finite difference approximation analysis, combined \eqref{consiseq} with \eqref{E2}-\eqref{E5} and \eqref{E1}, enables us to derive the following consistency estimate: 
\begin{equation}\label{mconsistency} 
    \aligned
    \setlength{\abovedisplayskip}{4pt}  
    &(\frac{\mathbf{m}_{e, h}^{n} - \mathbf{m}_{e, h}^{n-1}}{\Delta t},\mathbf{v}_h )_h + \gamma(\nabla \mathbf{m}_{e, h}^n, \nabla \mathbf{v}_h ) \\ 
     = &  \sum_{K \in \mathcal{K}_h} |K| \Big( \beta ( \nabla_h  \mathbf{m}_e^n \cdot 
  ( {\cal A}_h \mathbf{v}_h \times \nabla_h \mathbf{m}_e^{n-1} 
    + \nabla_h \mathbf{v}_h \times {\cal A}_h \mathbf{m}_e^{n-1} ) )_{K}  
    +\mathcal{E}^n(\mathbf{m}_e,\mathbf{v}_h) \\ 
     & 
     + \gamma \big(  \nabla_h \mathbf{m}_e^n \cdot  
   ( {\cal A}_h ( \mathbf{m}_e^{n-1} \cdot \mathbf{v}_h ) \nabla_h \mathbf{m}_e^{n-1} )_K   
    - ( {\cal A}_h ( \mathbf{m}_e^n - \mathbf{m}_e^{n-1} ) \cdot  
     \nabla_h \mathbf{m}_e^{n-1} )  \cdot  \nabla_h (\mathbf{m}_e^{n-1}\cdot \mathbf{v}_h ) \big)_{K} \Big) , 
    \setlength{\belowdisplayskip}{4pt}  
    \endaligned
\end{equation} 
for any vector test function $\mathbf{v}_h \in \mathbf{V}_h$. Moreover, 
$\mathcal{E}^n(\mathbf{m}_e,\mathbf{v}_h)$ could be bounded by  
 $$
   \Delta t \sum^{\ell}_{n=1}|\mathcal{E}^n(\mathbf{m}_e,\mathbf{v}_h)| \leq \epsilon\Delta t\sum^{\ell}_{n=1}\|\mathbf{v}^n_h\|^2_{H^1}+C(\|\mathbf{m} \|_{W^{1,\infty}(0,T;\mathbf{W}^{3,\infty}(\Omega))},\| \partial_{tt}\mathbf{m}\|_{L^\infty(0,T;\mathbf{L}^2(\Omega))})(h^4 + \Delta t^2 ).
 $$
Again, the node-wise interpolation values of $\mathbf{m}_e$ at the node points have been extensively utilized in the discrete integral expansions on the right hand side of~\eqref{mconsistency}. 

\subsection{Error estimate}
The numerical error functions, at both the intermediate and renormalization stages, are introduced as 
\begin{equation} \label{error function-1} 
    \setlength{\abovedisplayskip}{4pt}  
         \widetilde{\mathbf{e}}_h^n = \mathbf{m}_{e, h}^n - \widetilde{\mathbf{m}}_h^n , \quad 
         \mathbf{e}_h^n = \mathbf{m}_{e, h}^n - \mathbf{m}_h^n .  
    \setlength{\belowdisplayskip}{4pt}  
\end{equation}
Notice that the interpolation values of the numerical error functions at the node points will play an important role in the later analysis. A subtraction of \eqref{m_hconsistency} from \eqref{mconsistency} leads to an error evolutionary equation, in the variational form: 
\begin{equation}\label{merror} 
    \aligned
    \setlength{\abovedisplayskip}{4pt}  
    &(\frac{\widetilde{\mathbf{e}}_h^{n} - \mathbf{e}_h^{n-1}}{\Delta t},\mathbf{v}_h )_h + \gamma(\nabla \widetilde{\mathbf{e}}_h^n, \nabla \mathbf{v}_h ) \\ 
     = & \sum_{K\in \mathcal{K}} |K| \Big( \beta ( \nabla_h  \widetilde{\mathbf{e}}_h^n \cdot 
  ({\cal A}_h \mathbf{v}_h \times \nabla_h \mathbf{m}_h^{n-1} ) 
    +  \nabla_h  \mathbf{m}_e^n \cdot ( {\cal A}_h \mathbf{v}_h \times \nabla_h \mathbf{e}^{n-1}_h  ) )_{K}  \\ 
    &
    + \beta ( \nabla_h  \widetilde{\mathbf{e}}_h^n \cdot 
  ( \nabla_h \mathbf{v}_h \times {\cal A}_h \mathbf{m}^{n-1}_h ) 
    + \nabla_h  \mathbf{m}_e^n \cdot  ( \nabla_h \mathbf{v}_h \times {\cal A}_h \mathbf{e}^{n-1}_h ) )_{K} \\ 
     & 
     + \gamma \big( \nabla_h \mathbf{m}_e^n \cdot  
   ( {\cal A}_h ( \mathbf{e}^{n-1}_h \cdot \mathbf{v}_h ) \nabla_h \mathbf{m}_e^{n-1} )    
   + \nabla_h \widetilde{\mathbf{e}}^n \cdot  
   ( {\cal A}_h ( \mathbf{m}^{n-1}_h \cdot \mathbf{v}_h ) \nabla_h \mathbf{m}^{n-1}_h ) \big)_{K}  \\ 
     & 
     + \gamma \big( \nabla_h \mathbf{m}_e^n \cdot  
   ( {\cal A}_h ( \mathbf{m}^{n-1}_h \cdot \mathbf{v} ) \nabla_h \mathbf{e}^{n-1}_h )   
    - ( {\cal A}_h ( \mathbf{m}_e^n - \mathbf{m}_e^{n-1} ) \cdot  
     \nabla_h \mathbf{m}^{n-1}_h )  \cdot  \nabla_h (\mathbf{e}^{n-1}_h\cdot \mathbf{v}_h ) \big)_{K}  \\
     & 
     - \gamma \big(  ( {\cal A}_h ( \mathbf{m}_e^n - \mathbf{m}_e^{n-1} ) \cdot  
     \nabla_h \mathbf{e}^{n-1}_h )  \cdot  \nabla_h (\mathbf{m}_e^{n-1}\cdot \mathbf{v}_h ) \big)_{K} \\
    &
    - \gamma \big( ( {\cal A}_h ( \widetilde{\mathbf{e}}^n_h - \mathbf{e}^{n-1}_h ) \cdot  
     \nabla_h \mathbf{m}_h^{n-1} )  \cdot  \nabla_h (\mathbf{m}_h^{n-1}\cdot \mathbf{v}_h ) \big)_{K} \Big) + \mathcal{E}^n(\mathbf{m}_e,\mathbf{v}_h) . 
    \setlength{\belowdisplayskip}{4pt}  
    \endaligned
\end{equation}


    Before proceeding into the error estimate, we need a few precise estimates between \(\widetilde{\mathbf{e}}_h^n\) and \(\mathbf{e}_h^n\). The mass-lumped finite element inner product coincides with the node-wise inner product, and the corresponding proof follows similarly to Lemma 4.2 in \cite{li2026stability}. 
    \medskip
    \begin{lemma}\label{lem6}
        Assume that $|\mathbf{m}_h^n| = 1$ and $|\widetilde{\mathbf{m}}_h^n| \geq 1$, at the node points. The following inequality is valid: 
        \begin{equation}\label{e_l^2} 
              \setlength{\abovedisplayskip}{4pt}   
            (1)\, \|\mathbf{e}_h^n\|_{L^2_h}^2 + \| \widetilde{\mathbf{e}}_h^n - \mathbf{e}_h^n \|_{L^2_h}^2   
            \le \|\widetilde{\mathbf{e}}_h^n\|_{L^2_h}^2 
            \le 2 ( \|\mathbf{e}_h^n\|_{L^2_h}^2 + \| \widetilde{\mathbf{e}}_h^n - \mathbf{e}_h^n \|_{L^2_h}^2 ) .  
             \setlength{\belowdisplayskip}{4pt}  
        \end{equation}
Furthermore, under an $\| \cdot \|_{L^\infty}$ bound that $\|\widetilde{\mathbf{e}}_h^n\|_{L^\infty} \leq h^{\epsilon_0/4}$, we have
        \begin{equation}\label{e_lh1} 
         \setlength{\abovedisplayskip}{4pt}  
            (2) \,\|\nabla \mathbf{e}_h^n \|_{L^2} \leq M_0 (\|\widetilde{\mathbf{e}}_h^n\|_{L^2}  
            + \|\nabla \widetilde{\mathbf{e}}_h^n\|_{L^2} ), \quad 
            \mbox{$M_0$ is a constant only dependent on $\Omega$} .  
          \setlength{\belowdisplayskip}{4pt}  
        \end{equation}
    \end{lemma}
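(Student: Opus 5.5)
Since $(\cdot,\cdot)_h$ is a weighted sum of nodal values with positive weights $\sum_{K\ni \mathbf{x}_i}|K|/3$, part (1) reduces to a pointwise inequality at each node, which we then sum. At a fixed node write $\mathbf{a}=\mathbf{m}_{e,h}^n(\mathbf{x}_i)=\mathbf{m}_e^n(\mathbf{x}_i)$ with $|\mathbf{a}|=1$, $\widetilde{\mathbf{b}}=\widetilde{\mathbf{m}}_h^n(\mathbf{x}_i)$ with $|\widetilde{\mathbf{b}}|\ge1$, and $\mathbf{b}=\widetilde{\mathbf{b}}/|\widetilde{\mathbf{b}}|$. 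Then $\widetilde{\mathbf{e}}-\mathbf{e}=\mathbf{b}-\widetilde{\mathbf{b}}=-(|\widetilde{\mathbf{b}}|-1)\mathbf{b}$ is parallel to $\mathbf{b}$. Expanding gives
\[
|\widetilde{\mathbf{e}}|^2=|\mathbf{e}|^2+|\widetilde{\mathbf{e}}-\mathbf{e}|^2+2\,\mathbf{e}\cdot(\widetilde{\mathbf{e}}-\mathbf{e}),
\]
and the cross term equals $-2(|\widetilde{\mathbf{b}}|-1)(\mathbf{a}-\mathbf{b})\cdot\mathbf{b}=2(|\widetilde{\mathbf{b}}|-1)(1-\mathbf{a}\cdot\mathbf{b})\ge0$, because $|\mathbf{a}|=|\mathbf{b}|=1$. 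This proves the left inequality of (1). The right inequality is simply $|x+y|^2\le2|x|^2+2|y|^2$ applied to $\widetilde{\mathbf{e}}=\mathbf{e}+(\widetilde{\mathbf{e}}-\mathbf{e})$. Summing with the lumped weights gives \eqref{e_l^2}.

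For part (2) we work edgewise, using the representation \eqref{H1 inner product-1}: $\|\nabla \mathbf{f}_h\|_{L^2}^2$ is a weighted sum of squared nodal difference quotients $|\nabla_h^x\mathbf{f}|^2,|\nabla_h^y\mathbf{f}|^2$. So it suffices to bound $|\nabla_h\mathbf{e}^n_h|$ on each edge by $C(|\nabla_h\widetilde{\mathbf{e}}^n_h|+|\mathcal{A}_h\widetilde{\mathbf{e}}^n_h|)$ on that edge, and then sum. Write $\mathbf{m}_h^n=N(\widetilde{\mathbf{m}}_h^n)$ nodewise with $N(\mathbf{x})=\mathbf{x}/|\mathbf{x}|$, and similarly $\mathbf{m}_{e,h}^n=N(\mathbf{m}_e^n)$ at the nodes, since $|\mathbf{m}_e|=1$. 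Then
\[
\mathbf{e}^n_h=N(\mathbf{m}_e^n)-N(\widetilde{\mathbf{m}}^n_h)=\int_0^1 DN\big(\widetilde{\mathbf{m}}_h^n+s\widetilde{\mathbf{e}}_h^n\big)\,ds\;\widetilde{\mathbf{e}}_h^n=:G_i\,\widetilde{\mathbf{e}}_i
\]
at each node $i$. This is well defined because the hypothesis $\|\widetilde{\mathbf{e}}^n_h\|_{L^\infty}\le h^{\epsilon_0/4}\le 1/2$ (for small $h$), together with $|\mathbf{m}_e|=1$, keeps the segment at distance at least $1/2$ from the origin. On that region $N$ is smooth, so $DN$ and $D^2N$ are uniformly bounded.

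The discrete product rule on an edge $(i,i+1)$ gives
\[
\nabla_h^x\mathbf{e}=\mathcal{A}_x G\,\nabla_h^x\widetilde{\mathbf{e}}+\nabla_h^x G\,\mathcal{A}_x\widetilde{\mathbf{e}}.
\]
The first term is bounded by $C|\nabla_h^x\widetilde{\mathbf{e}}|$. For the second, the mean value theorem with bounded $D^2N$ gives $|\nabla_h^x G|\le C(|\nabla_h^x\widetilde{\mathbf{m}}_h^n|+|\nabla_h^x\widetilde{\mathbf{e}}_h^n|)\le C(|\nabla_h^x\mathbf{m}_{e,h}^n|+|\nabla_h^x\widetilde{\mathbf{e}}_h^n|)$. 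The smooth part is at most $C\|\mathbf{m}_e\|_{W^{1,\infty}}$, so it contributes $C|\mathcal{A}_x\widetilde{\mathbf{e}}|$, which after summation is controlled by $\|\widetilde{\mathbf{e}}^n_h\|_{L^2}$ via norm equivalence of $(\cdot,\cdot)_h$ and $L^2$ on $\mathbf{V}_h$. The remaining product $|\nabla_h^x\widetilde{\mathbf{e}}|\,|\mathcal{A}_x\widetilde{\mathbf{e}}|$ is handled by the $L^\infty$ hypothesis: it is at most $h^{\epsilon_0/4}|\nabla_h^x\widetilde{\mathbf{e}}|\le|\nabla_h^x\widetilde{\mathbf{e}}|$. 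Squaring, multiplying by $|K|$ and summing yields \eqref{e_lh1}.

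The main obstacle is this last nonlinear term $\nabla_h G\,\mathcal{A}_h\widetilde{\mathbf{e}}$. Without the $L^\infty$ smallness assumption on $\widetilde{\mathbf{e}}_h^n$, it cannot be absorbed with a constant $M_0$ independent of the solution. The same assumption is also what guarantees that $DN$ stays uniformly bounded along the segments. Finally, note that although \eqref{e_lh1} is stated with $M_0$ depending only on $\Omega$, the argument makes it depend also on the regularity of the exact solution $\mathbf{m}_e$ (through $\|\mathbf{m}_e\|_{W^{1,\infty}}$), which is presumably absorbed by the paper's convention on constants.
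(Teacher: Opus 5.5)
Your proof is correct and is essentially the argument the paper defers to (its finite-difference Lemma 4.2, transferred through the nodal structure of $(\cdot,\cdot)_h$ and the edgewise representation \eqref{H1 inner product-1}). Part (1) follows from the nodewise sign $2(|\widetilde{\mathbf{b}}|-1)(1-\mathbf{a}\cdot\mathbf{b})\ge 0$ of the cross term, and part (2) from an edgewise discrete product rule in which the $L^\infty$ smallness of $\widetilde{\mathbf{e}}_h^n$ absorbs the quadratic term; you are right that $M_0$ then also depends on $\|\mathbf{m}_e\|_{W^{1,\infty}}$ and the mesh shape constants. The only detail to add is that the mean-value bound for $G_{i+1}-G_i$ needs the segment joining the two nodal arguments to avoid the origin, which holds because along the edge it equals $\mathbf{m}_{e,h}^n-(1-s)\widetilde{\mathbf{e}}_h^n$, with $|\mathbf{m}_{e,h}^n|\ge 1-Ch^2$ and $|\widetilde{\mathbf{e}}_h^n|\le h^{\epsilon_0/4}$.
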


Now we present the main result in the following theorem for the fully discrete scheme \eqref{discretescheme}.
\medskip
\begin{theorem}\label{thm1error}
    Assume that $\mathbf{m}_0$ and the exact solution $\mathbf{m}_e$ to the LLG equation satisfy $\mathbf{m}_e \in W^{1,\infty}(0,\,T;\,\mathbf{W}^{3,\infty}(\Omega)) \bigcap H^2(0,\,T;\,\mathbf{L}^2(\Omega))$. For scheme \eqref{discretescheme1}-\eqref{discretescheme2}, for any $0 <\epsilon_0<1$, if there exists the positive constant $h_0 > 0$ such that
    \begin{equation}\label{CFLcondition}
        \begin{array}{ll}
         \setlength{\abovedisplayskip}{4pt}  
             h^2\lesssim \Delta t \lesssim h^{\epsilon_0},\,\,&\text{in the 2D case}, 
          \setlength{\belowdisplayskip}{4pt}  
        \end{array}
    \end{equation}
    and $h\leq h_0$, then there holds the optimal error estimate, for any $1 \le n \le N$: 
    \begin{equation} \label{convergence-0} 
     \setlength{\abovedisplayskip}{4pt}  
        \max_{1\leq n\leq N}(\|\mathbf{m}^n_h - \mathbf{m}_e^n\|_{L^2} + \|\widetilde{\mathbf{m}}^n_h - \mathbf{m}^n_e\|_{L^2}) \leq C_0(h^2 + \Delta t) ,  \quad 
        \mbox{ $C_0 > 0$ is independent of $h,~\Delta t$ and $h_0$} . 
     \setlength{\belowdisplayskip}{4pt}  
    \end{equation}
\end{theorem}

\begin{proof} 
Two index values $p_0$ and $q_0$ are introduced to facilitate the nonlinear error estimates: 
\begin{equation} 
\setlength{\abovedisplayskip}{4pt}
  2 < p_0 = \frac{2}{1 - \frac14 \epsilon_0} , \, \, \,  q_0 = 8 \epsilon_0^{-1} < +\infty , 
  \quad \mbox{so that}  \, \, \, \frac{1}{p_0} + \frac{1}{q_0} = \frac12 . 
\setlength{\belowdisplayskip}{4pt}
  \label{p-q-1} 
\end{equation} 
In fact, $p_0$ is slightly greater than 2, and $q_0 < +\infty$. Meanwhile, by the regularity assumption on the exact solution, together with a-priori estimates for the Lagrange interpolation operator, the following functional bound becomes available to $\mathbf{m}_{e,h}$: 
\begin{equation} 
\setlength{\abovedisplayskip}{4pt}
  \| \nabla \mathbf{m}_{e, h}^k \|_{L^{p_0}} , \, \,  \Big\| \frac{ \mathbf{m}_{e,h}^{k+1} - \mathbf{m}_{e, h}^k }{\Delta t} \Big\|_{L^\infty} \le C^* , \quad \forall k \ge 0 .  
\setlength{\belowdisplayskip}{4pt}
  \label{exact-inf-1} 
\end{equation} 
In addition, we make the following a-priori assumption for the numerical error function at the previous time step:
\begin{equation}\label{a priori-1} 
  \setlength{\abovedisplayskip}{4pt} 
\| \mathbf{e}_h^{n-1} \|_{L^2_h} , \, \| \widetilde{\mathbf{e}}_h^{n-1} - \mathbf{e}_h^{n-1} \|_{L^2_h}  
  \le \Delta t^{1 - \frac{\epsilon_0}{8}} + h^{2 - \frac{\epsilon_0}{4}} , \quad   
 \| \nabla \widetilde{\mathbf{e}}_h^{n-1} \|_{L^2}  \le \Delta t^{\frac12 - \frac{\epsilon_0}{8}} 
   + h^{1 - \frac{\epsilon_0}{4}} . 
   \setlength{\belowdisplayskip}{4pt}
\end{equation}
Such an a-priori assumption is valid at $n=0$, and it will be recovered by the optimal rate convergence analysis at the next time step, as will be proved later. In turn, a maximum norm bound for the numerical error function at the intermediate stage could be derived as 
\begin{equation}\label{a priori-2} 
  \setlength{\abovedisplayskip}{4pt}
\begin{aligned} 
  & 
  \| \widetilde{\mathbf{e}}_h^{n-1} \|_{L^2_h} \le (\| \mathbf{e}^{n-1}_h \|_{L^2_h} 
  + \| \widetilde{\mathbf{e}}^{n-1}_h - \mathbf{e}^{n-1}_h \|_{L^2_h}) 
  \le 2 (\Delta t^{1 - \frac{\epsilon_0}{8}} + h^{2 - \frac{\epsilon_0}{4}}) , 
\\
  &  
   \| \widetilde{\mathbf{e}}_h^{n-1} \|_{L^\infty} \le \frac{C ( \| \widetilde{\mathbf{e}}^{n-1} \|_{L^2} 
   + \| \nabla \widetilde{\mathbf{e}}_h^{n-1} \|_{L^2} ) }{h^\frac{\epsilon_0}{4}} \le C ( \Delta t^{\frac12 - \frac{\epsilon_0}{8}} h^{-\frac{\epsilon_0}{4}} + h^{2 - \frac{\epsilon_0}{2}} )   \le C h^{\frac{\epsilon_0}{8}} , 
\end{aligned} 
  \setlength{\belowdisplayskip}{4pt}
\end{equation} 
in which the scaling law that $\Delta t^{\frac12 - \frac{\epsilon_0}{8}} \le h^{\epsilon_0 ( \frac12 - \frac{\epsilon_0}{8} )} \le h^{\frac{3 \epsilon_0}{8}}$ has been applied, provided that $\epsilon_0$ is sufficiently small. 
Under such an error bound, an application of the preliminary estimate~\eqref{e_lh1} gives 
\begin{equation} 
  \setlength{\abovedisplayskip}{4pt} 
\begin{aligned} 
  \|\nabla \mathbf{e}_h^{n-1} \|_{L^2} \le & M_0 (\|\widetilde{\mathbf{e}}^{n-1} \|_{L^2_h} 
            + \|\nabla \widetilde{\mathbf{e}}_h^{n-1} \|_{L^2} ) 
            \le (M_0 +\frac12) ( \Delta t^{\frac12 - \frac{\epsilon_0}{8}}   
   + h^{1 - \frac{\epsilon_0}{4}} ) 
\\
   \le & 
     (M_0 +\frac12) ( h^{\frac{3 \epsilon_0}{8}}  
   + h^{1 - \frac{\epsilon_0}{4}} ) \le (M_0 + 1)  h^{\frac{3 \epsilon_0}{8}} . 
\end{aligned} 
  \setlength{\belowdisplayskip}{4pt} 
   \label{a priori-3}
\end{equation} 
Based on the fact that $\frac22 - \frac{2}{p_0} = \frac{\epsilon_0}{4}$, an application of inverse inequality implies that 
\begin{equation} 
  \setlength{\abovedisplayskip}{4pt}  
  \|\nabla \mathbf{e}_h^{n-1} \|_{L^{p_0}} \le C h^{-\frac{\epsilon_0}{4}} 
  \cdot \|\nabla \mathbf{e}_h^{n-1} \|
  \le C (M_0 + 1)  h^{3\epsilon_0 /8} \le 1/2 . 
  \setlength{\belowdisplayskip}{4pt} 
   \label{a priori-4}
\end{equation} 
Subsequently, 
an $L^\infty \cap W^{1, p_0}$ bound for the numerical solution is valid at the previous time step: 
\begin{equation} \label{a priori-5}
  \setlength{\abovedisplayskip}{4pt}  
\begin{aligned} 
  & 
    \|\widetilde{\mathbf{m}}_h^{n-1} \|_{L^\infty} \le \| \mathbf{m}_{e, h}^{n-1} \|_{L^\infty}  
    + \| \widetilde{\mathbf{e}}_h^{n-1} \|_{L^\infty} \le C^* + 1/2 = \tilde{C}_1 , 
\\
  & 
    \|\nabla \mathbf{m}_h^{n-1} \|_{L^{p_0}} \le \|\nabla \mathbf{m}_e^{n-1} \|_{L^{p_0}}  
    + \|\nabla \mathbf{e}_h^{n-1} \|_{L^{p_0}} \le C^* + 1/2 = \tilde{C}_1. 
\end{aligned} 
  \setlength{\belowdisplayskip}{4pt} 
\end{equation} 

By taking a test function $\mathbf{v}_h = \widetilde{\mathbf{e}}_h^n$ in \eqref{merror}, we obtain  
    \begin{align}
    \setlength{\abovedisplayskip}{4pt} 
        &\frac{1}{2 \Delta t} ( \|\widetilde{\mathbf{e}}^n_h\|_{L^2_h}^2 - \|\mathbf{e}^{n-1}_h\|_{L^2_h}^2 + \|\widetilde{\mathbf{e}}_h^n - \mathbf{e}_h^{n-1}\|^2_{L^2_h} ) + \gamma \|\nabla \widetilde{\mathbf{e}}_h^n\|^2_{L^2} \nonumber \\ 
     = & \sum_{K \in \mathcal{K}_h} |K| \Big( \beta ( \nabla_h  \widetilde{\mathbf{e}}_h^n \cdot 
  ( {\cal A}_h \widetilde{\mathbf{e}}_h^n \times \nabla_h \mathbf{m}_h^{n-1} ) 
    +  \nabla_h  \mathbf{m}_e^n \cdot ( {\cal A}_h \widetilde{\mathbf{e}}_h^n  
     \times \nabla_h \mathbf{e}_h^{n-1}  ) )_{K} 
    \nonumber \\ 
    &
    + \beta ( \nabla_h  \widetilde{\mathbf{e}}_h^n \cdot 
  ( \nabla_h \widetilde{\mathbf{e}}_h^n \times {\cal A}_h \mathbf{m}^{n-1}_h ) 
    + \nabla_h  \mathbf{m}_e^n \cdot  ( \nabla_h \widetilde{\mathbf{e}}_h^n \times
     {\cal A}_h \mathbf{e}^{n-1}_h ) )_{K} \nonumber \\ 
     & 
     + \gamma \big( \nabla_h \mathbf{m}_e^n \cdot  
   ( {\cal A}_h ( \mathbf{e}^{n-1}_h \cdot \widetilde{\mathbf{e}}_h^n ) \nabla_h \mathbf{m}_e^{n-1} )    
   + \nabla_h \widetilde{\mathbf{e}}_h^n \cdot  
   ( {\cal A}_h ( \mathbf{m}^{n-1}_h \cdot \widetilde{\mathbf{e}}_h^n ) \nabla_h \mathbf{m}^{n-1}_h ) \big)_{K}  \nonumber  \\ 
     & 
     + \gamma \big( \nabla_h \mathbf{m}_e^n \cdot  
   ( {\cal A}_h ( \mathbf{m}^{n-1}_h \cdot \widetilde{\mathbf{e}}_h^n ) \nabla_h \mathbf{e}^{n-1}_h )   
    - ( {\cal A}_h ( \mathbf{m}_e^n - \mathbf{m}_e^{n-1} ) \cdot  
     \nabla_h \mathbf{m}^{n-1}_h )  \cdot  
     \nabla_h (\mathbf{e}^{n-1}_h\cdot \widetilde{\mathbf{e}}^n_h ) \big)_{K} \nonumber \\
     & 
     - \gamma \big(  ( {\cal A}_h ( \mathbf{m}_e^n - \mathbf{m}_e^{n-1} ) \cdot  
     \nabla_h \mathbf{e}^{n-1}_h )  \cdot  
     \nabla_h (\mathbf{m}_e^{n-1} \cdot \widetilde{\mathbf{e}}_h^n   ) \big)_{K}  \nonumber \\
    &
    - \gamma \big( ( {\cal A}_h ( \widetilde{\mathbf{e}}_h^n - \mathbf{e}_h^{n-1} ) \cdot  
     \nabla_h \mathbf{m}^{n-1}_h )  \cdot  
     \nabla_h (\mathbf{m}^{n-1}_h\cdot \widetilde{\mathbf{e}}^n_h   ) \big)_{K} \Big)+\mathcal{E}^n(\mathbf{m}_e,\mathbf{v}_h) .  
     \setlength{\belowdisplayskip}{4pt} 
     \label{emerror} 
\end{align} 

The nonlinear error inner product terms, which have been expressed in terms of discrete summation, have to be analyzed in a careful way. A direct application of discrete H\"older inequality would face certain technical difficulties, since the triangular meshes may not be uniform. Instead, a conversion of these discrete summations into an appropriate continuous integral is needed to facilitate the theoretical analysis. We begin with the first term, namely $J_1 := \beta \sum_{K} | K|  ( \nabla_h  \widetilde{\mathbf{e}}^n_h \cdot ( {\cal A}_h \widetilde{\mathbf{e}}_h^n \times \nabla_h \mathbf{m}^{n-1}_h )  )_{K}$. It is observed that, the component-wise values of $\nabla_h  \widetilde{\mathbf{e}}_h^n$, $\nabla_h \mathbf{m}_h^{n-1}$, are exactly equal to those of $\nabla  \widetilde{\mathbf{e}}_h^n$ and $\nabla \mathbf{m}_h^{n-1}$, respectively, over each triangular mesh $K\in \mathcal{K}_h$, since a linear element is utilized. Meanwhile, the error function $\widetilde{\mathbf{e}}_h^n$ is not piecewise constant, while ${\cal A}_h \widetilde{\mathbf{e}}^n_h$ has component-wise constant values over each edge in the triangular mesh. To handle this discrete summation in a more convenient way, we introduce a piecewise-constant continuous function: 
\begin{equation} 
  \setlength{\abovedisplayskip}{4pt} 
  {\bf P}_h ( {\cal A}_h \mathbf{f} ) = \left( \begin{array}{l} 
  {\cal A}_x \mathbf{f}_{i+\frac12,j}  \\ 
  {\cal A}_y \mathbf{f}_{i,j+\frac12} 
  \end{array} \right) , \quad \mbox{over $K_{i,j}$} , \quad \mbox{for any vector grid function $\mathbf{f}$} . 
  \setlength{\belowdisplayskip}{4pt} 
  \label{continuous extension-1} 
\end{equation} 
Notice that the two component values are evaluated over the edges $(i,j) \to (i+1,j)$, and $(i,j) \to (i,j+1)$, and each component value corresponds to a two-dimensional vector. Of course, ${\bf P}_h ( {\cal A}_h \mathbf{f}) $ is not in the finite element functional space, since it is not continuous at the node points. Moreover, in terms of a comparison estimate between such an averaged function and its original finite element version, the following inequalities are straightforward: 
\begin{equation} 
  \setlength{\abovedisplayskip}{4pt} 
  \breve{C}_1 \| \mathbf{f}_h \|_{L^p} \le \| \mathbf{f}_h \|_{L_h^p} 
    \le \breve{C}_2 \| \mathbf{f}_h \|_{L^p}  , \quad 
  \| {\bf P}_h ( {\cal A}_h \mathbf{f}_h ) \|_{L^p}
    \le \breve{C}_2 \| \mathbf{f}_h \|_{L^p}  , \quad 1 \le p < + \infty,\,\,\forall\,\mathbf{f}\in \mathbf{V}_h .  
   \setlength{\belowdisplayskip}{4pt} 
   \label{continuous extension-2} 
\end{equation} 
On the other hand,  the following equality is observed, due to the fact that $\nabla  \widetilde{\mathbf{e}}_h^n$, $\nabla \mathbf{m}_h^{n-1}$ and ${\bf P}_h ( {\cal A}_h \widetilde{\mathbf{e}}^n )$ becomes piecewise-constant vector functions over each triangular mesh: 
\begin{equation} 
  \setlength{\abovedisplayskip}{4pt} 
  | K|  ( \nabla_h  \widetilde{\mathbf{e}}^n_h \cdot ( {\cal A}_h \widetilde{\mathbf{e}}_h^n \times \nabla_h \mathbf{m}^{n-1}_h )  )_{K} = \int_{K} \,   \nabla  \widetilde{\mathbf{e}}_h^n \cdot ( {\bf P}_h ( {\cal A}_h \widetilde{\mathbf{e}}_h^n ) \times \nabla \mathbf{m}_h^{n-1} )  \, d {\bf x} . 
  \setlength{\belowdisplayskip}{4pt} 
  \label{continuous extension-3} 
\end{equation} 

As a result, a bound for the first nonlinear inner product term on the right hand side of~\eqref{emerror} could be derived as follows, with the help of H\"older inequality (at the continuous level): 
\begin{align}  
  \setlength{\abovedisplayskip}{4pt}  
  J_1 = & \beta \sum_{K\in \mathcal{K}_h} |K|  ( \nabla_h  \widetilde{\mathbf{e}}_h^n \cdot ( {\cal A}_h \widetilde{\mathbf{e}}_h^n \times \nabla_h \mathbf{m}_h^{n-1} )  )_{K} 
  =  ( \nabla  \widetilde{\mathbf{e}}_h^n , {\bf P}_h ( {\cal A}_h \widetilde{\mathbf{e}}_h^n ) \times \nabla \mathbf{m}_h^{n-1} ) \notag
  \\
   \le  & 
   \beta \| \nabla \widetilde{\mathbf{e}}_h^n \|_{L^2}  
  \cdot \| {\bf P}_h ( {\cal A}_h \widetilde{\mathbf{e}}_h^n ) \|_{L^{q_0}} 
  \cdot \| \nabla \mathbf{m}_h^{n-1} \|_{L^{p_0}}  
   \le \beta \breve{C}_2 \| \nabla \widetilde{\mathbf{e}}_h^n \|_{L^2}  
  \cdot \| \widetilde{\mathbf{e}}_h^n \|_{L^{q_0}} \cdot \| \nabla \mathbf{m}_h^{n-1} \|_{L^{p_0}}  \label{convergence-NLE-1} 
\\
  \le & 
  \beta \breve{C}_2 \tilde{C}_1 \| \nabla \widetilde{\mathbf{e}}_h^n \|_{L^2}  
  \cdot \| \widetilde{\mathbf{e}}_h^n \|_{L^{q_0}} ,  \quad \mbox{(by the a-priori bound~\eqref{a priori-5})}. \notag
\end{align} 
  
The second nonlinear inner product term on the right hand side of~\eqref{emerror} could be similarly bounded: 
\begin{equation} 
\begin{aligned}  
  \setlength{\abovedisplayskip}{4pt}  
  J_2 = & \beta \sum_{K \in \mathcal{K}_h} | K |  ( \nabla_h  \mathbf{m}_e^n \cdot ( {\cal A}_h \widetilde{\mathbf{e}}_h^n  
     \times \nabla_h \mathbf{e}^{n-1}_h  ) )_{K}
  =  ( \nabla  \mathbf{m}_{e, h}^n , {\bf P}_h ( {\cal A}_h \widetilde{\mathbf{e}}_h^n ) 
     \times \nabla \mathbf{e}_h^{n-1}  )
  \\
   \le  & 
      \beta \breve{C}_2 \| \nabla  \mathbf{m}_{e, h}^n \|_{L^{p_0}} 
   \cdot  \| \widetilde{\mathbf{e}}_h^n \|_{L^{q_0}} \cdot \| \nabla \mathbf{e}_h^{n-1} \|_{L^2} 
  \le  \beta \breve{C}_2 C^* \| \nabla \mathbf{e}_h^{n-1} \|_{L^2}  
  \cdot \| \widetilde{\mathbf{e}}_h^n \|_{L^{q_0}}  . 
\end{aligned} 
  \setlength{\belowdisplayskip}{4pt} 
  \label{convergence-NLE-2} 
\end{equation} 

The third nonlinear inner product term on the right hand side of~\eqref{emerror} vanishes, due to the fact that the vectors $\nabla_h \widetilde{\mathbf{e}}^n_h \times {\cal A}_h \mathbf{m}^{n-1}_h$ and $\nabla_h \widetilde{\mathbf{e}}_h^n$ are orthogonal at each wedge section: 
\begin{equation} 
  \setlength{\abovedisplayskip}{4pt}  
  J_3 = \beta \sum_{K \in \mathcal{K}_h} | K | ( \nabla_h  \widetilde{\mathbf{e}}_h^n \cdot 
  ( \nabla_h \widetilde{\mathbf{e}}_h^n \times {\cal A}_h \mathbf{m}^{n-1}_h ) )_{K} = 0 . 
  \setlength{\belowdisplayskip}{4pt}  
  \label{convergence-NLE-3} 
\end{equation} 
The rest nonlinear inner product terms on the right hand side of~\eqref{emerror} could be similarly analyzed: 
\begin{align} 
  \setlength{\abovedisplayskip}{4pt} 
  J_4 = & \beta \sum_{K \in \mathcal{K}_h} | K| ( \nabla_h  \mathbf{m}_e^n \cdot  ( 
  \nabla_h \widetilde{\mathbf{e}}_h^n \times   {\cal A}_h \mathbf{e}_h^{n-1} ) )_{K}   
    = \beta ( \nabla  \mathbf{m}_{e, h}^n ,   \nabla \widetilde{\mathbf{e}}_h^n \times
    ( {\bf P}_h ( {\cal A}_h \mathbf{e}^{n-1} ) ) )  \nonumber 
\\
  \le & 
  \beta \| \nabla  \mathbf{m}_{e, h}^n \|_{L^{p_0}} \cdot  \| \nabla \widetilde{\mathbf{e}}_h^n \|_{L^2} 
    \cdot \| {\bf P}_h ( {\cal A}_h \mathbf{e}^{n-1} ) \|_{L^{q_0}} 
  \le \beta C^* \breve{C}_2   \| \nabla \widetilde{\mathbf{e}}_h^n \|_{L^2} \cdot \| \mathbf{e}_h^{n-1}  \|_{L^{q_0}} , 
  \label{convergence-NLE-4} 
\\
  J_5 = & 
      \gamma \sum_{K \in \mathcal{K}_h} | K | \big( \nabla_h \mathbf{m}_e^n \cdot  
   ( {\cal A}_h ( \mathbf{e}_h^{n-1} \cdot \widetilde{\mathbf{e}}_h^n ) \nabla_h \mathbf{m}_e^{n-1} )  \big)_{K}  
     = \gamma \big( \nabla \mathbf{m}_{e, h}^n ,  
   {\bf P}_h ( {\cal A}_h ( \mathbf{e}_h^{n-1} \cdot \widetilde{\mathbf{e}}_h^n ) ) \nabla \mathbf{m}_{e, h}^{n-1} \big)  
   \nonumber  \\ 
     \le & 
      \gamma \| \nabla \mathbf{m}_{e, h}^n \|_{L^{p_0}}   \cdot 
      \| {\bf P}_h ( {\cal A}_h ( \mathbf{e}^{n-1}_h \cdot \widetilde{\mathbf{e}}_h^n ) ) \|_{L^{q_0/2}} 
        \cdot \| \nabla \mathbf{m}_{e, h}^{n-1} \|_{L^{p_0}} 
      \le \gamma ( C^* )^2 \breve{C}_2 \| \mathbf{e}_h^{n-1} \|_{q_0} \cdot \| \widetilde{\mathbf{e}}_h^n \|_{q_0} , 
     \label{convergence-NLE-5}   \\ 
    J_6 = & 
     \gamma \sum_{K \in \mathcal{K}_h} |K| \big( \nabla_h \widetilde{\mathbf{e}}_h^n \cdot  
   ( {\cal A}_h ( \mathbf{m}^{n-1}_h \cdot \widetilde{\mathbf{e}}_h^n ) \nabla_h \mathbf{m}^{n-1}_h ) \big)_{K}  
   = \gamma \big( \nabla \widetilde{\mathbf{e}}_h^n ,   
   {\bf P}_h ( {\cal A}_h ( \mathbf{m}_h^{n-1} \cdot \widetilde{\mathbf{e}}_h^n ) ) 
   \nabla \mathbf{m}_h^{n-1} )   \nonumber  \\ 
   \le & \gamma \| \nabla \widetilde{\mathbf{e}}_h^n \|_{L^2} \cdot    
   \| {\bf P}_h ( {\cal A}_h ( \mathbf{m}^{n-1}_h \cdot \widetilde{\mathbf{e}}_h^n ) ) \|_{L^{q_0}} 
   \cdot \| \nabla \mathbf{m}_h^{n-1} \|_{L^{p_0}} \nonumber \\ 
  \le & \gamma \breve{C}_2 \| \nabla \widetilde{\mathbf{e}}_h^n \|_{L^2} \cdot    
   \| \mathbf{m}_h^{n-1} \|_{L^\infty} \cdot \| \widetilde{\mathbf{e}}_h^n \|_{L^{q_0}} 
   \cdot \| \nabla \mathbf{m}_h^{n-1} \|_{L^{p_0}}  
   \le \gamma \tilde{C}_1^2 \breve{C}_2 \| \nabla \widetilde{\mathbf{e}}_h^n \|_{L^2} 
    \cdot \| \widetilde{\mathbf{e}}_h^n \|_{L^{q_0}} ,  \label{convergence-NLE-6} \\  
     J_7 = & 
      \gamma \sum_{K \in \mathcal{K}_h} |K| \big( \nabla_h \mathbf{m}_e^n \cdot  
   ( {\cal A}_h ( \mathbf{m}^{n-1}_h \cdot \widetilde{\mathbf{e}}_h^n ) \nabla_h \mathbf{e}^{n-1}_h ) \big)_{K} 
   = \gamma \big( \nabla \mathbf{m}_{e, h}^n ,   
   {\bf P}_h ( {\cal A}_h ( \mathbf{m}^{n-1}_h \cdot \widetilde{\mathbf{e}}_h^n ) )
     \nabla \mathbf{e}_h^{n-1} \big) \nonumber \\ 
     \le & 
     \gamma \| \nabla \mathbf{m}_{e, h}^n \|_{L^{p_0}} \cdot    
   \| {\bf P}_h ( {\cal A}_h ( \mathbf{m}^{n-1}_h \cdot \widetilde{\mathbf{e}}_h^n ) ) \|_{L^{q_0}} 
     \cdot \| \nabla \mathbf{e}_h^{n-1} \|_{L^2}   \nonumber \\
     \le & 
     \gamma \breve{C}_2 \| \nabla \mathbf{m}_{e, h}^n \|_{L^{p_0}} \cdot    
   \| \mathbf{m}^{n-1} \|_{L^\infty} \cdot \| \widetilde{\mathbf{e}}^n_h \|_{q_0} 
     \cdot \| \nabla \mathbf{e}_h^{n-1} \|_{L^2}  
     \le \gamma C^* \tilde{C}_1 \breve{C}_2 \| \widetilde{\mathbf{e}}_h^n \|_{L^{q_0}} 
     \cdot \| \nabla \mathbf{e}_h^{n-1} \|_{L^2} ,  \label{convergence-NLE-7} \\ 
     J_8 = & 
       - \gamma \sum_{K \in \mathcal{K}_h} | K| \big( 
     ( {\cal A}_h ( \mathbf{m}_e^n - \mathbf{m}_e^{n-1} ) \cdot  
     \nabla_h \mathbf{m}_h^{n-1} )  \cdot  
     \nabla_h (\mathbf{e}^{n-1}_h\cdot \widetilde{\mathbf{e}}_h^n ) \big)_{K}  \nonumber \\ 
     = & 
     - \gamma \big( 
     {\bf P}_h ( {\cal A}_h ( \mathbf{m}_e^n - \mathbf{m}_e^{n-1} ) ) \cdot  
     \nabla \mathbf{m}_h^{n-1}  ,   
     \nabla \mathbf{e}_h^{n-1} \cdot {\bf P}_h ({\cal A}_h \widetilde{\mathbf{e}}_h^n ) 
     + {\bf P}_h ( {\cal A}_h  \mathbf{e}_h^{n-1} ) \cdot \nabla \widetilde{\mathbf{e}}_h^n \big)  
      \nonumber \\
      \le & 
      \gamma \breve{C}_2 \| \mathbf{m}_{e, h}^n - \mathbf{m}_{e, h}^{n-1} \|_{L^\infty} \cdot  
      \| \nabla \mathbf{m}_h^{n-1} \|_{L^{p_0}}    
      \big( \| \nabla \mathbf{e}_h^{n-1} \|_{L^2} \cdot \| \widetilde{\mathbf{e}}_h^n \|_{L^{q_0}}  
     + \|  \mathbf{e}_h^{n-1} \|_{L^{q_0}} \cdot \| \nabla \widetilde{\mathbf{e}}_h^n \|_{L^2} \big)  
      \nonumber \\
      \le & 
      \gamma C^* \tilde{C}_1 \breve{C}_2 \Delta t 
      \big( \| \nabla \mathbf{e}_h^{n-1} \|_{L^2} \cdot \| \widetilde{\mathbf{e}}_h^n \|_{L^{q_0}}  
     + \|  \mathbf{e}_h^{n-1} \|_{L^{q_0}} \cdot \| \nabla \widetilde{\mathbf{e}}_h^n \|_{L^2} \big) , 
      \label{convergence-NLE-8} \\
     J_9 = & 
     - \gamma \sum_{K \in \mathcal{K}_h} | K| \big(  ( {\cal A}_h ( \mathbf{m}_e^n - \mathbf{m}_e^{n-1} ) \cdot  
     \nabla_h \mathbf{e}^{n-1}_h )  \cdot  
     \nabla_h (\mathbf{m}_e^{n-1} \cdot \widetilde{\mathbf{e}}_h^n   ) \big)_{K}  \nonumber \\ 
    = & 
    - \gamma \big(  {\bf P}_h ( {\cal A}_h ( \mathbf{m}_e^n - \mathbf{m}_e^{n-1} ) ) \cdot  
     \nabla \mathbf{e}_h^{n-1} ,   
     \nabla \mathbf{m}_{e, h}^{n-1} \cdot {\cal P}_h ( {\cal A}_h \widetilde{\mathbf{e}}_h^n ) 
     + {\bf P}_h ({\cal A}_h (\mathbf{m}_e^{n-1} ) \cdot \nabla \widetilde{\mathbf{e}}_h^n ) \big)  \nonumber \\ 
    \le & 
      \gamma \breve{C}_2  \| \mathbf{m}_{e, h}^n - \mathbf{m}_{e, h}^{n-1} \|_{L^\infty} \cdot  
     \| \nabla \mathbf{e}_h^{n-1} \|_{L^2}    
    \big( \nabla \mathbf{m}_{e, h}^{n-1} \|_{L^{p_0}} \cdot \| \widetilde{\mathbf{e}}_h^n \|_{L^{q_0}}  
     + \| \mathbf{m}_e^{n-1} \|_{L^\infty} \cdot \| \nabla \widetilde{\mathbf{e}}_h^n \|_{L^2} \big)  \nonumber \\ 
   \le & 
      \gamma (C^*)^2 \breve{C}_2  \Delta t   
     \| \nabla \mathbf{e}_h^{n-1} \|_{L^2}    ( \| \widetilde{\mathbf{e}}_h^n \|_{L^{q_0}} 
     + \| \nabla \widetilde{\mathbf{e}}_h^n \|_{L^2})  , \label{convergence-NLE-9}  \\ 
    J_{10} = &
    - \gamma \sum_{K\in \mathcal{K}_h} | K| \big( ( {\cal A}_h ( \widetilde{\mathbf{e}}_h^n - \mathbf{e}^{n-1}_h ) \cdot  
     \nabla_h \mathbf{m}^{n-1}_h )  \cdot  
     \nabla_h (\mathbf{m}^{n-1}_h\cdot \widetilde{\mathbf{e}}_h^n) \big)_{K}  \nonumber \\ 
     = &
    - \gamma \big( {\bf P}_h ( {\cal A}_h ( \widetilde{\mathbf{e}}_h^n - \mathbf{e}_h^{n-1} ) ) \cdot  
     \nabla \mathbf{m}_h^{n-1} ,   
     \nabla \mathbf{m}_h^{n-1} \cdot {\bf P}_h ( {\cal A}_h \widetilde{\mathbf{e}}_h^n ) 
    + {\bf P}_h ( {\cal A}_h \mathbf{m}^{n-1}_h ) \cdot \nabla \widetilde{\mathbf{e}}_h^n  \big)  \nonumber \\ 
    \le &
      \gamma \breve{C}_2^2 ( \| \widetilde{\mathbf{e}}_h^n \|_{L^{q_0}} + \| \mathbf{e}_h^{n-1} \|_{L^{q_0}}  )   
     \| \nabla \mathbf{m}_h^{n-1} \|_{L^{p_0}}     
     \big( \| \nabla \mathbf{m}_h^{n-1} \|_{L^{p_0}} \cdot \| \widetilde{\mathbf{e}}_h^n \|_{L^{q_0}}  
    + \| \mathbf{m}_h^{n-1} \|_{L^\infty} \cdot \| \nabla \widetilde{\mathbf{e}}_h^n  \|_{L^2} \big)  \nonumber \\ 
   \le &
      \gamma \tilde{C}_1^2 \breve{C}_2^2 ( \| \widetilde{\mathbf{e}}_h^n \|_{L^{q_0}} 
      + \| \mathbf{e}_h^{n-1} \|_{L^{q_0}}  )     
     ( \| \widetilde{\mathbf{e}}_h^n \|_{L^{q_0}}  + \| \nabla \widetilde{\mathbf{e}}_h^n  \|_{L^2} )  . 
      \label{convergence-NLE-10}
     \setlength{\belowdisplayskip}{4pt} 
\end{align}

Subsequently, a substitution of~\eqref{convergence-NLE-1}-\eqref{convergence-NLE-10} 
into \eqref{emerror} leads to 
\begin{equation} 
  \setlength{\abovedisplayskip}{4pt}    
  \begin{aligned} 
    & 
    \frac{1}{2 \Delta t} (\|\widetilde{\mathbf{e}}_h^n\|_{L^2_h}^2 - \|\mathbf{e}_h^{n-1}\|_{L^2_h}^2 + \|\widetilde{\mathbf{e}}_h^n - \mathbf{e}^{n-1}_h\|^2_{L^2_h}) + \gamma \|\nabla \widetilde{\mathbf{e}}_h^n\|^2_{L^2} 
    - \frac{\gamma}{16 M_0} \| \nabla \widetilde{\mathbf{e}}_h^n \|_{L^2}   
  \cdot \| \nabla \mathbf{e}_h^{n-1} \|_{L^2}
\\
  \le & 
 (\tilde{C}_2 + \frac{\tilde{C}_3}{2})  \| \mathbf{e}_h^{n-1} \|_{L^{q_0}}^2 + ( \tilde{C}_2  + \frac{\breve{C}_2^2}{2} + \frac{3\tilde{C}_3}{2}) \| \widetilde{\mathbf{e}}_h^n \|_{L^{q_0}}^2  
  + (\tilde{C}_4 +1)\| \nabla \mathbf{e}_h^{n-1} \|_{L^2}\| \widetilde{\mathbf{e}}_h^n \|_{L^{q_0}}  
\\
  & 
  + \| \nabla \widetilde{\mathbf{e}}_h^n \|_{L^2} (\tilde{C}_5 \| \widetilde{\mathbf{e}}_h^n \|_{L^{q_0}}  
  + ( 1 + \tilde{C}_3 +  \beta C^* \breve{C}_2) \| \mathbf{e}_h^{n-1} \|_{L^{q_0}} ) + |\mathcal{E}^n(\mathbf{m}_e,\mathbf{v}_h)|, 
  \setlength{\belowdisplayskip}{4pt}  
\end{aligned} 
    \label{convergence-3-2} 
\end{equation} 
with $\tilde{C}_2 = \frac{\gamma (C^*)^2 \breve{C}_2}{2}$, $\tilde{C}_3 = \gamma \tilde{C}_1^2 \breve{C}_2^2$, $\tilde{C}_4 = ( \beta + \gamma \tilde{C}_1 ) C^* \breve{C}_2 + 1$, $\tilde{C}_5 = (\beta + \gamma \tilde{C}_1 ) \tilde{C}_1 \breve{C}_2 + 1 + \tilde{C}_3$. On the other hand, a repeated application of Cauchy inequality gives 
\begin{align} 
  \setlength{\abovedisplayskip}{4pt}  
    & 
   \frac{\gamma}{16 M_0} \| \nabla \widetilde{\mathbf{e}}_h^n \|_{L^2}  
  \cdot \| \nabla \mathbf{e}_h^{n-1} \|_{L^2}  
  \le \frac{\gamma}{16} \| \nabla \widetilde{\mathbf{e}}_h^n \|_{L^2}^2  
  + \frac{\gamma}{64 M_0^2} \| \nabla \mathbf{e}_h^{n-1} \|_{L^2}^2 , 
    \label{convergence-3-3} 
\\
  & 
  ( \tilde{C}_4 + 1)\| \nabla \mathbf{e}_h^{n-1} \|_{L^2}  \| \widetilde{\mathbf{e}}_h^n \|_{L^{q_0}}  
  \le  
  \frac{\gamma}{64 M_0^2} \| \nabla \mathbf{e}_h^{n-1} \|_{L^2}^2 
  + 16 M_0^2 \gamma^{-1} ( \tilde{C}_4 + 1)^2\| \widetilde{\mathbf{e}}_h^n \|^2_{L^{q_0}} , 
   \label{convergence-3-4} 
\\
  & 
  \| \nabla \widetilde{\mathbf{e}}_h^n \|_{L^2} (\tilde{C}_5 \| \widetilde{\mathbf{e}}_h^n \|_{L^{q_0}}  
  + ( 1 + \tilde{C}_3 + \beta C^* \breve{C}_2) \| \mathbf{e}_h^{n-1} \|_{L^{q_0}} )  \notag 
\\
  \le & 
  \frac{\gamma}{16} \| \nabla \widetilde{\mathbf{e}}_h^n \|_{L^2}^2 
  + 8 \gamma^{-1}  (\tilde{C}_5^2 \| \widetilde{\mathbf{e}}_h^n \|_{L^{q_0}}^2  
  + ( 1 + \tilde{C}_3 + \beta C^* \breve{C}_2)^2 \| \mathbf{e}_h^{n-1} \|_{L^{q_0}}^2 )  .  
  \label{convergence-3-5} 
  \setlength{\belowdisplayskip}{4pt}  
\end{align} 
Therefore, a substitution of~\eqref{convergence-3-3}-\eqref{convergence-3-5} into \eqref{convergence-3-2} yields 
\begin{equation} 
  \setlength{\abovedisplayskip}{4pt}    
  \begin{aligned} 
    & 
    \frac{1}{2 \Delta t} ( \|\widetilde{\mathbf{e}}_h^n\|_{L^2_h}^2 - \|\mathbf{e}_h^{n-1}\|_{L^2_h}^2 + \|\widetilde{\mathbf{e}}_h^n - \mathbf{e}_h^{n-1}\|_{L^2_h}^2 ) + \frac{7 \gamma}{8} \|\nabla \widetilde{\mathbf{e}}_h^n \|_{L^2}^2  
    - \frac{\gamma}{32 M_0^2} \|\nabla \mathbf{e}_h^{n-1} \|_{L^2}^2 
\\
  \le & 
  \tilde{C}_6  \| \mathbf{e}_h^{n-1} \|_{L^{q_0}}^2 + \tilde{C}_7 \| \widetilde{\mathbf{e}}_h^n \|_{L^{q_0}}^2  + |\mathcal{E}^n(\mathbf{m}_e,\mathbf{v}_h)| , 
  \setlength{\belowdisplayskip}{4pt}  
\end{aligned} 
    \label{convergence-3-6} 
\end{equation}   
with $\tilde{C}_6 = \tilde{C}_2 +\frac{\widetilde{C}_3}{2} + 8 \gamma^{-1}  ( 1 + \tilde{C}_3 + \beta C^* \breve{C}_2)^2$, $\tilde{C}_7 = \tilde{C}_2 + \frac{\breve{C}_2^2}{2} +\frac{3\widetilde{C}_3}{2}+ 16 M_0^2 \gamma^{-1} (\tilde{C}_4+ 1)^2+8\gamma^{-1}\widetilde{C}_5^2$. Meanwhile, with $2 < q_0 = 8 \epsilon_0^{-1} < + \infty$, an application of the interpolation inequality~\eqref{interpolation ineq-1} 
indicates that 
\begin{equation} 
    \setlength{\abovedisplayskip}{4pt}  
\begin{aligned} 
  & 
  \|\boldsymbol{f}_h \|_{L^{q_0}} \le  \breve{C}_0 \|\boldsymbol{f}_h \|_{L^2}^{\frac{\epsilon_0}{4}} \cdot 
    ( \| \boldsymbol{f}_h \|_{L^2} + \| \nabla \boldsymbol{f}_h \|_{L^2} )^{1 - \frac{\epsilon_0}{4} }    
   \le \breve{C}_0\breve{C}_1 ( \| \boldsymbol{f}_h \|_{L^2} 
   + \| \boldsymbol{f}_h \|_{L^2}^{\frac{\epsilon_0}{4}} \cdot 
    \| \nabla \boldsymbol{f}_h \|_{L^2}^{1 - \frac{\epsilon_0}{4} }), 
\end{aligned}  
     \setlength{\belowdisplayskip}{4pt}   
    \label{convergence-3-7} 
\end{equation}
so that $\|\boldsymbol{f}_h \|_{L^{q_0}}^2  
    \le C ( \epsilon_0, \alpha )  \| \boldsymbol{f}_h \|_{L^2}^2  
    + \alpha \| \nabla \boldsymbol{f}_h \|_{L^2}^2,\,\,\forall \alpha > 0$, in which the Young's inequality has been applied in the last step. As a consequence, we obtain  
\begin{align} 
  & 
  \tilde{C}_6  \| \widetilde{\mathbf{e}}_h^n \|_{L^{q_0}}^2  
  \le \tilde{C}_{8}  (\epsilon_0, \gamma) \| \widetilde{\mathbf{e}}_h^n \|_{L^2}^2 
  + \frac{\gamma}{8} \| \nabla \widetilde{\mathbf{e}}_h^n \|_{L^2}^2 , 
  \label{convergence-3-8} 
\\
  &
   \tilde{C}_7  \| \mathbf{e}^{n-1}_h \|_{L^{q_0}}^2  
  \le \tilde{C}_{9}  (\epsilon_0, \gamma, M_0) \| \mathbf{e}^{n-1}_h \|_{L^2}^2 
  + \frac{\gamma}{32 M_0^2} \| \nabla \mathbf{e}_h^{n-1} \|_{L^2}^2, \label{convergence-3-9} 
\end{align}  
and its substitution into~\eqref{convergence-3-6} leads to 
\begin{equation} 
  \setlength{\abovedisplayskip}{4pt}    
\begin{aligned} 
    & 
    \frac{1}{2 \Delta t} ( \|\widetilde{\mathbf{e}}_h^n\|_{L^2_h}^2 - \|\mathbf{e}_h^{n-1}\|_{L^2_h}^2 + \|\widetilde{\mathbf{e}}_h^n - \mathbf{e}_h^{n-1}\|^2_{L_h^2} ) + \frac{3 \gamma}{4} \|\nabla \widetilde{\mathbf{e}}_h^n \|_{L^2}^2 
    - \frac{\gamma}{16 M_0^2} \|\nabla \mathbf{e}_h^{n-1} \|_{L^2}^2   
\\
  \le & 
  C(\| \mathbf{e}^{n-1} \|_{L^2}^2 + \| \widetilde{\mathbf{e}}^n \|_{L^2}^2)   
  + |\mathcal{E}^n(\mathbf{m}_e,\mathbf{v}_h)|. 
\end{aligned} 
   \label{convergence-3-10} 
  \setlength{\belowdisplayskip}{4pt}  
\end{equation}    
Meanwhile, by the a-priori error estimate~\eqref{e_lh1}, we observe that $\|\nabla \mathbf{e}_h^{n-1} \|^2 \le 2 M_0^2  (\| \widetilde{\mathbf{e}}_h^{n-1} \|^2 +  \| \nabla \widetilde{\mathbf{e}}_h^{n-1} \|^2)$, and arrive at 
\begin{align} 
    & 
    \frac{1}{2 \Delta t} ( \|\widetilde{\mathbf{e}}_h^n\|_{L^2_h}^2 - \|\mathbf{e}_h^{n-1}\|_{L^2_h}^2 + \|\widetilde{\mathbf{e}}_h^n - \mathbf{e}_h^{n-1}\|^2_{L^2_h} ) + \frac{3 \gamma}{4} \|\nabla \widetilde{\mathbf{e}}_h^n \|^2 
    - \frac{\gamma}{8} \|\nabla \widetilde{\mathbf{e}}_h^{n-1} \|^2  \notag
\\
  \le & 
  C(\| \mathbf{e}^{n-1}_h \|_{L^2}^2 + \| \widetilde{\mathbf{e}}_h^n \|_{L^2}^2)  + |\mathcal{E}^n(\mathbf{m}_e,\mathbf{v}_h)|. \label{convergence-3-11} 
\end{align}
In addition, by the $\ell^2$ error estimate~\eqref{e_l^2} in the normalization stage, we get    
\begin{align} 
    & 
    \frac{1}{2 \Delta t} ( \| \mathbf{e}_h^n\|_{L^2_h}^2 - \|\mathbf{e}_h^{n-1}\|_{L^2_h}^2 + \|\widetilde{\mathbf{e}}_h^n - \mathbf{e}_h^{n-1}\|^2_{L^2_h} + \| \widetilde{\mathbf{e}}_h^n - \mathbf{e}_h^n \|_{L^2_h}^2) + \frac{3 \gamma}{4} \|\nabla \widetilde{\mathbf{e}}_h^n \|_{L^2}^2  
    - \frac{\gamma}{8} \|\nabla \widetilde{\mathbf{e}}_h^{n-1} \|_{L^2}^2 \notag 
\\
  \le& C(\| \mathbf{e}^{n-1}_h \|_{L^2_h}^2 + \|\mathbf{e}^{n}_h\|_{L^2_h}^2 + \| \widetilde{\mathbf{e}}_h^n -\mathbf{e}^{n}_h\|_{L^2_h}^2)  + |\mathcal{E}^n(\mathbf{m}_e,\mathbf{v}_h)|,\label{convergence-3-12}  
\end{align} 
in which the comparison estimate~\eqref{continuous extension-2} (between the discrete and continuous norms) has been applied in the second step. A summation in time implies that 
\begin{equation} 
  \setlength{\abovedisplayskip}{4pt}    
    \| \mathbf{e}_h^n\|_{L^2_h}^2 
    + \frac12 \sum_{k=1}^n \| \widetilde{\mathbf{e}}_h^k - \mathbf{e}_h^k \|_{L^2_h}^2  
    + \frac{5 \gamma \Delta t}{4} \sum_{k=1}^n \|\nabla \widetilde{\mathbf{e}}_h^k \|_{L^2}^2  
  \le  
   C  \Delta t \sum_{k=0}^n \| \mathbf{e}_h^k \|_{L^2_h}^2  
  +  \Delta t \sum_{k=1}^n |\mathcal{E}^n(\mathbf{m}_e,\mathbf{v}_h)|.  
  \setlength{\belowdisplayskip}{4pt}  
    \label{convergence-3-13} 
\end{equation}   
Therefore, an application of discrete Gronwall inequality yields the desired error estimate 
\begin{equation} 
  \setlength{\abovedisplayskip}{4pt}    
    \| \mathbf{e}_h^n\|_{L^2_h}  
    + \Big( \frac12 \sum_{k=0}^n \| \widetilde{\mathbf{e}}_h^k - \mathbf{e}_h^k \|_{L^2_h}^2  \Big)^\frac12 
    + \Big( \gamma \Delta t \sum_{k=1}^n \|\nabla \widetilde{\mathbf{e}}_h^k \|_{L^2}^2 \Big)^\frac12 
  \le  \hat{C} ( \Delta t+ h^2 ) . 
  \setlength{\belowdisplayskip}{4pt}  
    \label{convergence-3-14} 
\end{equation}   
By the comparison estimate~\eqref{continuous extension-2}, the convergence estimate~\eqref{convergence-0} becomes valid. In turn, it is observed that the a-priori assumption~\eqref{a priori-1} is recovered at the next time step $t^n$: 
\begin{equation}\label{a priori-6} 
  \setlength{\abovedisplayskip}{4pt} 
  \begin{aligned} 
  & 
\| \mathbf{e}_h^n \|_{L^2_h}  ,\,\,\| \widetilde{\mathbf{e}}_h^n - \mathbf{e}_h^n \|_{L^2_h}\le  \hat{C} ( \Delta t+ h^2 ) \le \Delta t^{1 - \frac{\epsilon_0}{8}} 
 + h^{2 - \frac{\epsilon_0}{4}} , 
\\
  &    
 \| \nabla \widetilde{\mathbf{e}}_h^n \|_{L^2}  \le \frac{\hat{C} ( \Delta t + h^2) }{\gamma^\frac12 \Delta t^\frac12} 
 \le \Delta t^{\frac12 - \frac{\epsilon_0}{8}} + h^{1 - \frac{\epsilon_0}{4}} , 
\end{aligned}  
   \setlength{\belowdisplayskip}{4pt}
\end{equation}
under the scaling law $h^2 \le \Delta t \le h^{\epsilon_0}$, provided that $\Delta t$ and $h$ are sufficiently small. Finally, combining the comparison estimate \eqref{continuous extension-2} (between the discrete and continuous norms) and the estimates for the interpolation operator, we complete the proof of Theorem \ref{thm1error}.
\end{proof}
\begin{remark}
    The error analysis can also be naturally extended to general acute triangular meshes. With the aid of the formula 
    \begin{equation}
        \int_{\triangle ABC}\nabla u\cdot\nabla v = \alpha _1 \big(\frac{u_A -u_B}{|AB|}\big)\big(\frac{v_A -v_B}{|AB|}\big) + \alpha_2 \big(\frac{u_B -u_C}{|BC|}\big)\big(\frac{v_B -v_C}{|BC|}\big) + \alpha_3 \big(\frac{u_C - u_A}{|AC|}\big)\big(\frac{v_C -v_A}{|AC|}\big),
    \end{equation}
    where \(\alpha_1,\,\,\alpha_2,\) and \(\alpha_3\) are constants dependent only on the shape of the triangle for any acute triangular $\triangle ABC \in \mathcal{K}_h$. By combining this representation with the norm equivalence between difference gradients and linear finite elements, an analogous error analysis could be directly carried out within the inner product framework.
\end{remark}
\begin{table}[htbp]
	\centering
	\caption{Error and convergence rates for the magnetization in $L_h^2$ norm on uniform grid with $\Delta t = h^2$}
	\label{table1}
	\small
	\begin{tabular}{c|c|c|c|c} \hline\hline
		$N_x\times N_y$&$||\mathbf{e}_h^n||_{L^{\infty}(L_h^{2})}$ &$Rate$
		&$||\widetilde{\mathbf{e}}_h^n||_{L^\infty(L_h^2)}$ &$Rate$  \\ \hline
		$5\times 5$ &2.40  &---       &2.39    &---          \\
		$10\times 10$ &6.19E-1  &1.96     &6.19E-1     &1.95         \\
		$20\times 20$ &1.57E-1  &1.97    &1.57E-1      &1.97        \\
		$40\times 40$ &3.94E-2  &2.00     &3.94E-2     &2.00        \\
		$80\times 80$ &9.85E-3  &2.00     &9.85E-3      &2.00         \\
		\hline\hline
	\end{tabular}
\end{table}
\begin{table}[htbp]
	\centering
	\caption{Error and convergence rates for the magnetization in $L_h^2$ norm on uniform grid with $\Delta t = h$}
	\label{table2}
	\small
	\begin{tabular}{c|c|c|c|c} \hline\hline
		$N_x\times N_y$&$||\mathbf{e}_h^n||_{L^{\infty}(L_h^{2})}$ &$Rate$
		&$||\widetilde{\mathbf{e}}_h^n||_{L^\infty(L_h^2)}$ &$Rate$  \\ \hline
		$5\times 5$ &3.46  &---       &3.36      &---          \\
		$10\times 10$ &1.90  &0.87     &1.88E-1     &0.84         \\
		$20\times 20$ &8.65E-1  &1.13    &8.64E-1      &1.13        \\
		$40\times 40$ &4.32E-1  &1.00     &4.32E-1     &1.00        \\
		$80\times 80$ &2.19E-1  &0.98     &2.19E-2      &0.98         \\
		\hline\hline
	\end{tabular}
\end{table}

\begin{table}[htbp]
	\centering
	\caption{Error and convergence rates for the magnetization in $L_h^2$ norm on quasi-uniform grid with $\Delta t = h^2$}
	\label{table3}
	\small
	\begin{tabular}{c|c|c|c|c} \hline\hline
		$N_x\times N_y$&$||\mathbf{e}_h^n||_{L^{\infty}(L_h^{2})}$ &$Rate$
		&$||\widetilde{\mathbf{e}}_h^n||_{L^\infty(L_h^2)}$ &$Rate$  \\ \hline
		$5\times 5$ &2.30  &---       &2.29    &---          \\
		$10\times 10$ &6.20E-1  &1.89     &6.20E-1     &1.88         \\
		$20\times 20$ &1.57E-1  &1.98   &1.57E-1      &1.98        \\
		$40\times 40$ &3.95E-2  &1.99     &3.95E-2     &1.99        \\
		$80\times 80$ &9.85E-3  &2.00     &9.85E-3      &2.00         \\
		\hline\hline
	\end{tabular}
\end{table}
\begin{table}[htbp]
	\centering
	\caption{Error and convergence rates for the magnetization in $L_h^2$ norm on quasi-uniform grid with $\Delta t = h$}
	\label{table4}
	\small
	\begin{tabular}{c|c|c|c|c} \hline\hline
		$N_x\times N_y$&$||\mathbf{e}_h^n||_{L^{\infty}(L_h^{2})}$ &$Rate$
		&$||\widetilde{\mathbf{e}}_h^n||_{L^\infty(L_h^2)}$ &$Rate$  \\ \hline
		$5\times 5$ &3.38  &---       &3.29      &---          \\
		$10\times 10$ &1.91  &0.83     &1.89E-1     &0.80         \\
		$20\times 20$ &8.65E-1  &1.14    &8.63E-1      &1.13        \\
		$40\times 40$ &4.32E-1  &1.00     &4.32E-1     &1.00        \\
		$80\times 80$ &2.19E-1  &0.98     &2.19E-2      &0.98         \\
		\hline\hline
	\end{tabular}
\end{table}

\begin{figure}[htbp]
	\centering
    \subfigure[Example 2]
    {\includegraphics[width=0.4\linewidth, trim = {0cm 0cm 0cm 0cm}, clip]{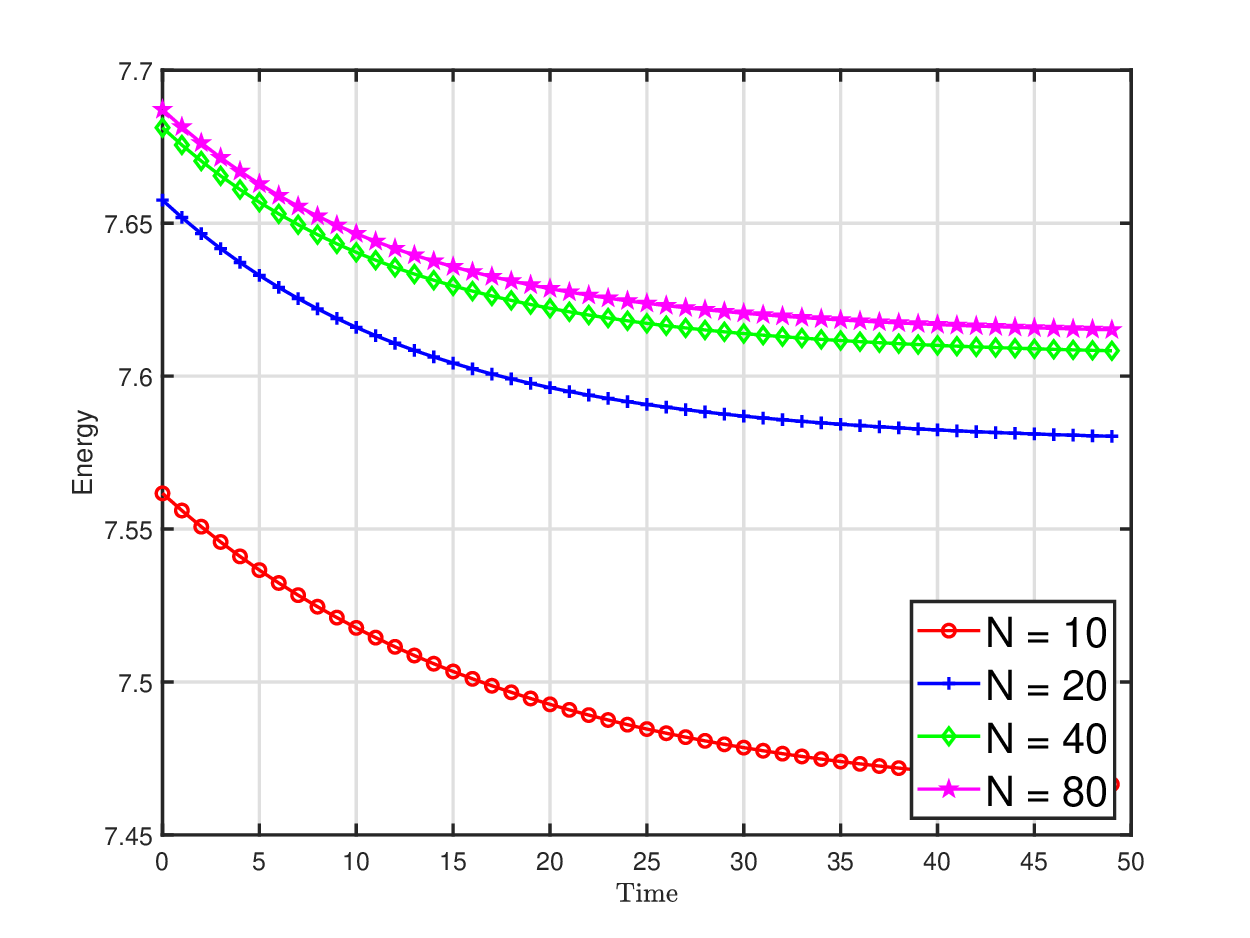}}
    \subfigure[Example 3]
    {\includegraphics[width=0.4\linewidth, trim = {0cm 0cm 0cm 0cm}, clip]{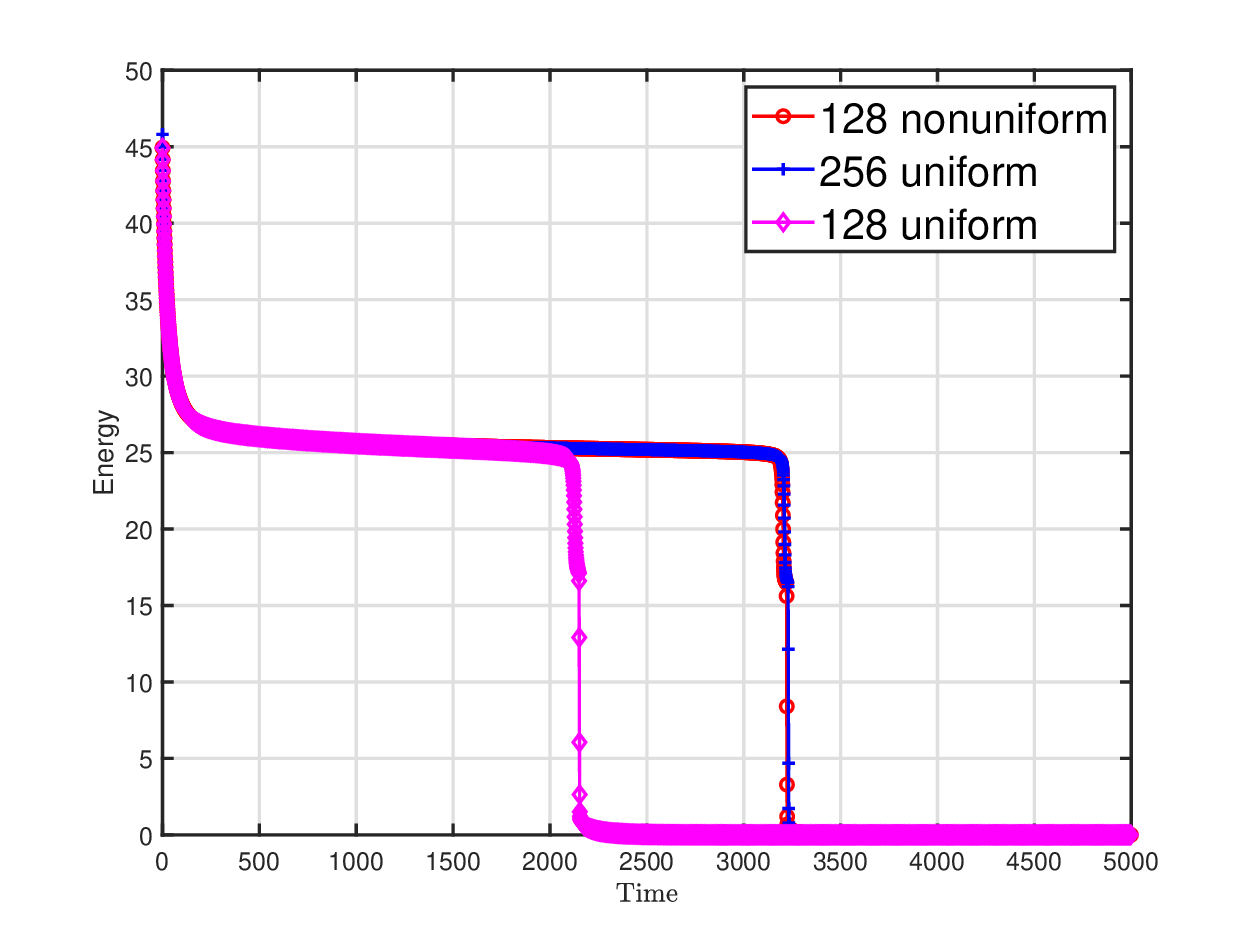}}
	\caption{Evolutions of original energy $\displaystyle \|\nabla\mathbf{m}_h^n\|^2_{L^2}$ for Examples 2 and 3.}
	\label{fig1}
\end{figure}

\begin{figure}[htbp]
	\centering
    \subfigure[Central refinement mesh]
	{\includegraphics[width=0.35\linewidth, trim = {0cm 0cm 0cm 0cm}, clip]{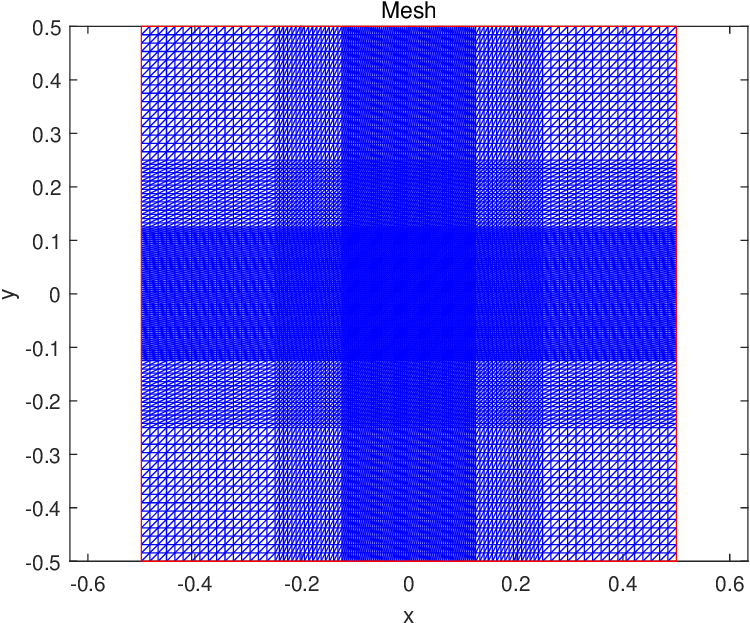}}
	\subfigure[Uniform mesh]
	{\includegraphics[width=0.35\linewidth, trim = {0cm 0cm 0cm 0cm}, clip]{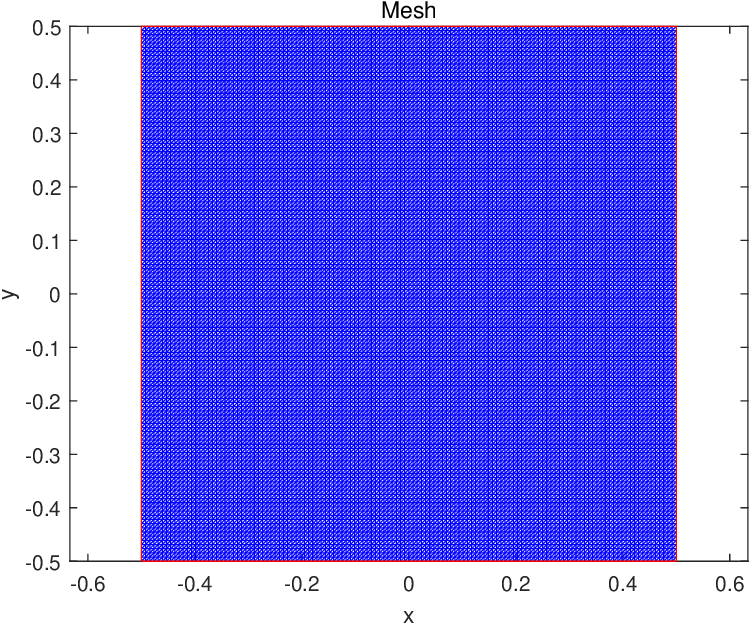}}
	\caption{Central refinement v.s. Uniform mesh discretization}
	\label{fig6}
\end{figure}

\begin{figure}[htbp]
	\centering
    \subfigure[$T=0$]
	{\includegraphics[width=0.3\linewidth, trim = {0cm 0cm 0cm 0cm}, clip]{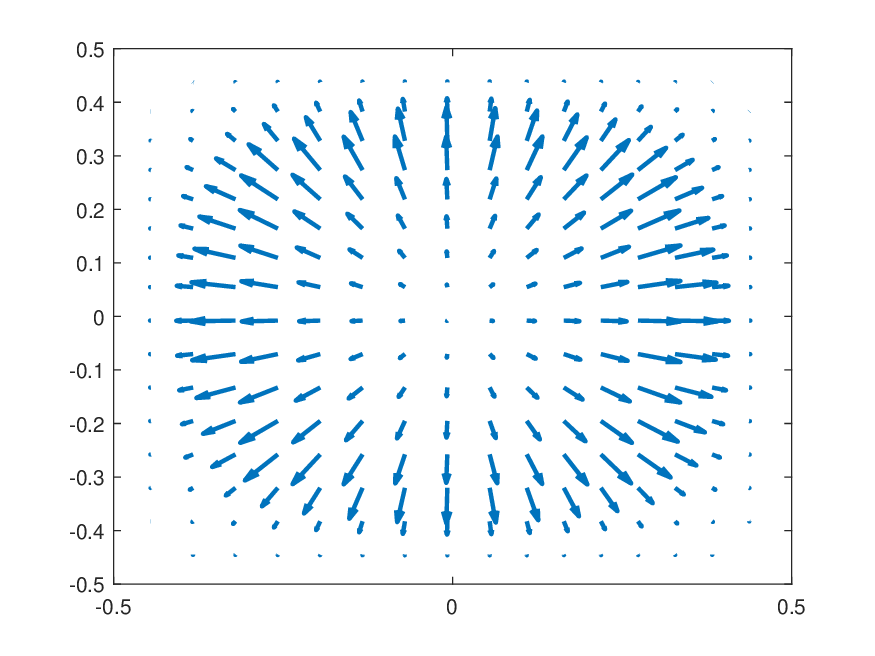}}
	\subfigure[$T=0.06$]
	{\includegraphics[width=0.3\linewidth, trim = {0cm 0cm 0cm 0cm}, clip]{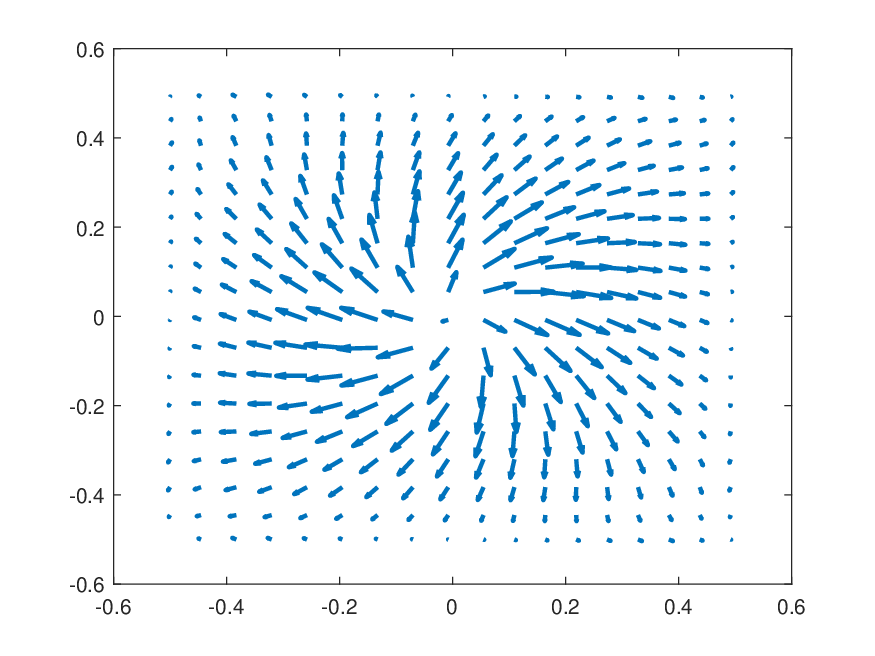}}
	\subfigure[$T=0.15$]
	{\includegraphics[width=0.3\linewidth, trim = {0cm 0cm 0cm 0cm}, clip]{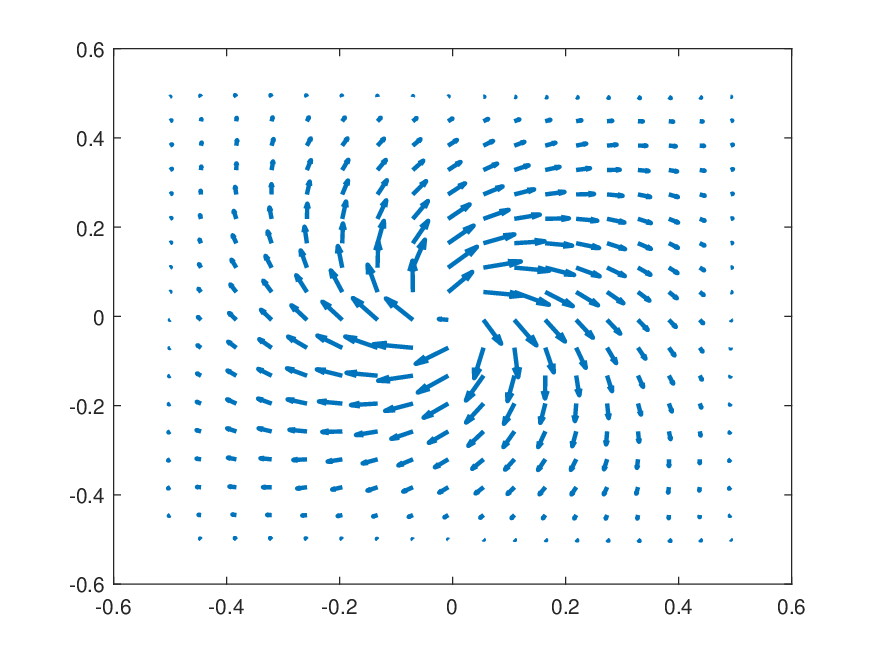}}\\
	\subfigure[$T=0.30$]
	{\includegraphics[width=0.3\linewidth, trim = {0cm 0cm 0cm 0cm}, clip]{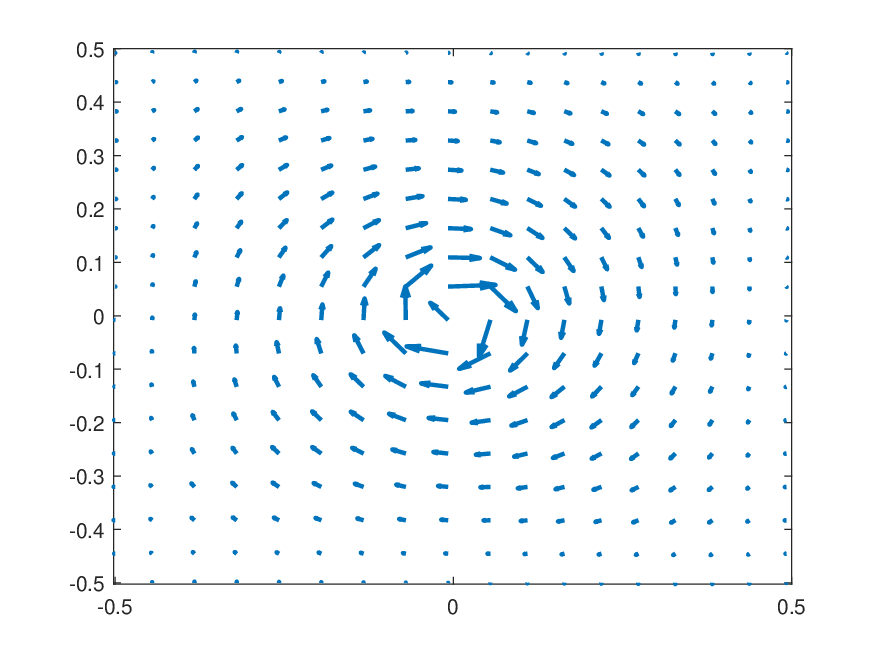}}
	\subfigure[$T=0.32$]
	{\includegraphics[width=0.3\linewidth, trim = {0cm 0cm 0cm 0cm}, clip]{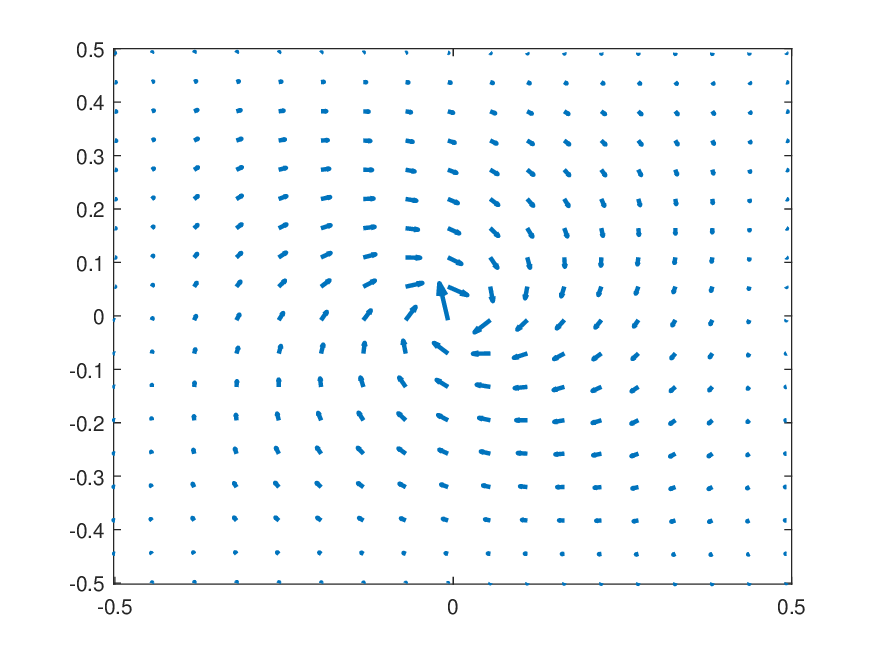}}
    \subfigure[$T=0.35$]
	{\includegraphics[width=0.3\linewidth, trim = {0cm 0cm 0cm 0cm}, clip]{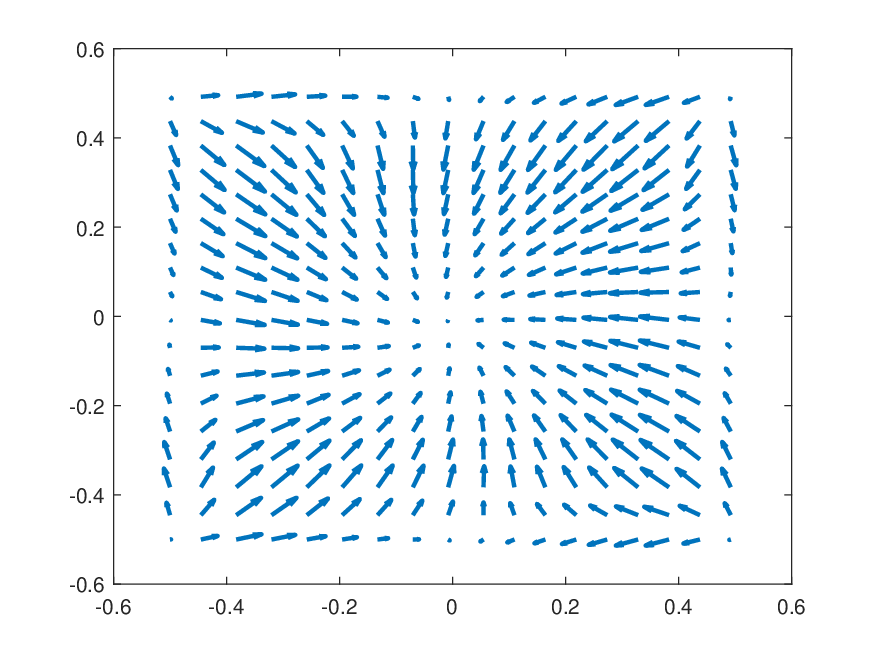}}
	\caption{Numerical magnetization $\mathbf{m}^n$ (projected on $x_1x_2$-plane) for Example 3.}
	\label{fig2}
\end{figure}

\begin{figure}[htbp]
	\centering
    \subfigure[$T=0$]
	{\includegraphics[width=0.3\linewidth, trim = {0cm 0cm 0cm 0cm}, clip]{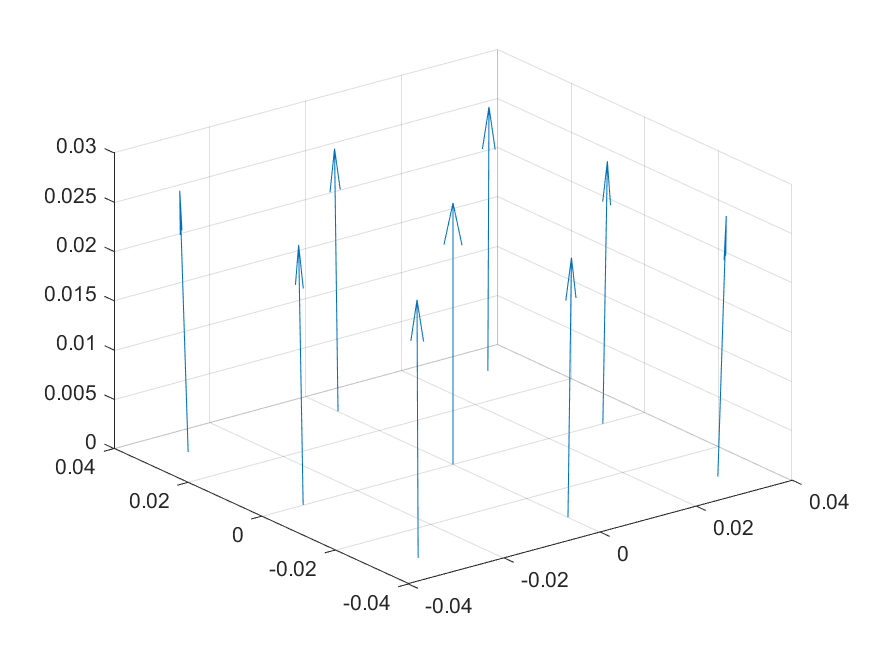}}
	\subfigure[$T=0.06$]
	{\includegraphics[width=0.3\linewidth, trim = {0cm 0cm 0cm 0cm}, clip]{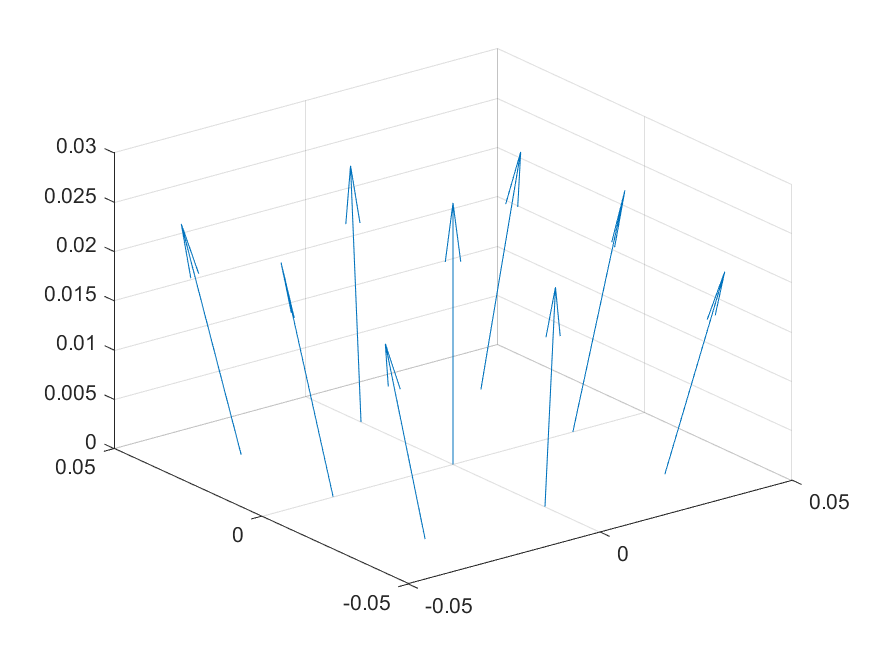}}
	\subfigure[$T=0.15$]
	{\includegraphics[width=0.3\linewidth, trim = {0cm 0cm 0cm 0cm}, clip]{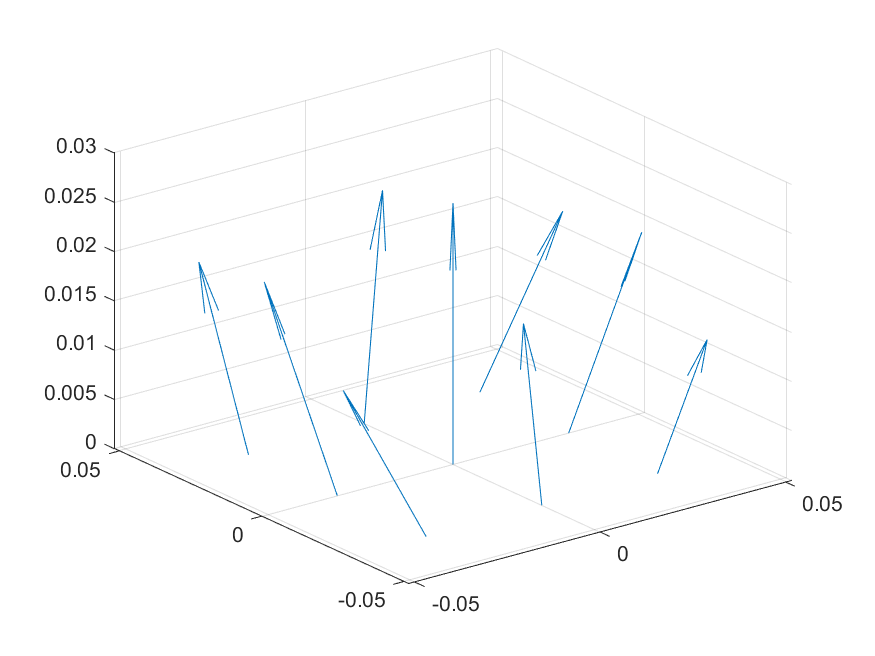}}\\
	\subfigure[$T=0.30$]
	{\includegraphics[width=0.3\linewidth, trim = {0cm 0cm 0cm 0cm}, clip]{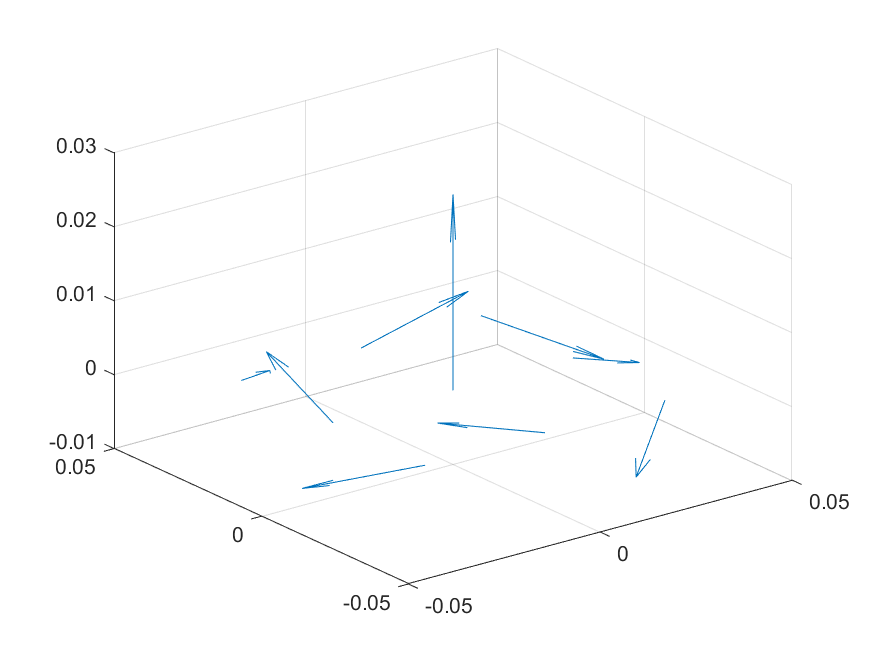}}
	\subfigure[$T=0.32$]
	{\includegraphics[width=0.3\linewidth, trim = {0cm 0cm 0cm 0cm}, clip]{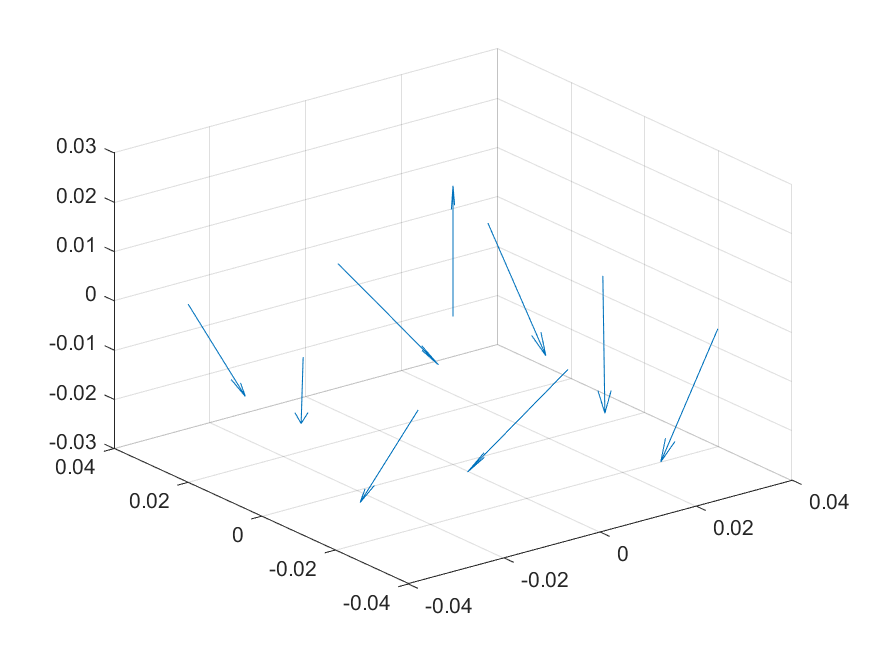}}
    \subfigure[$T=0.35$]
	{\includegraphics[width=0.3\linewidth, trim = {0cm 0cm 0cm 0cm}, clip]{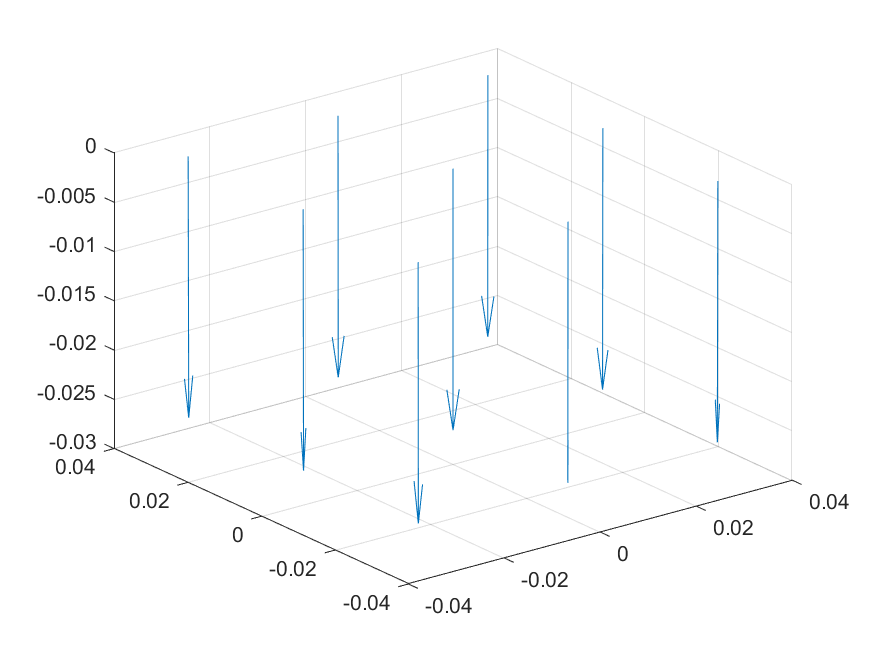}}
	\caption{Numerical magnetization $\mathbf{m}^n$ around the origin for Example 3.}
	\label{fig3}
\end{figure}

\section{Numerical results} 
In this section, we present several numerical experiments to validate the theoretical analysis.

\subsection{Convergence test} The convergence rate is tested for the LLG equation with an external force. The exact solution is set as 
\begin{equation*}
   \setlength{\abovedisplayskip}{4pt} 
    \left\{
    \begin{array}{l}
         m_e^x(x,y,t) = \sin(t+x)\cos(t+y),  \\
         m_e^y(x,y,t) = \cos(t+x)\cos(t+y),  \\
         m_e^z(x,y,t) = \sin(t+y).
    \end{array}  \right. 
    \setlength{\belowdisplayskip}{4pt} 
\end{equation*}
It is clear that the above-defined $\mathbf{m}_e$ satisfies an exact node-wise length preservation, $|\mathbf{m}| =1$. We take $\beta = 1,\,\gamma=1,\,T=1$ and $\Omega = (0,\,2\pi)^2$, and periodic boundary conditions are used. The initial spatial partition is taken as a $5\times5$ grid, and the time step size is refined as $\Delta t = 1/N_x^2 = 1/N_y^2$, to demonstrate both the temporal and spatial convergence rates. Tables \ref{table1}-\ref{table4} clearly indicate the second-order accuracy in space and first-order convergence rate in time, respectively, with  $\Delta t = h^2$ and $\Delta t = h$ in the discrete $L^2_h$ norm. These numerical results are in good agreement with the theoretical predictions in Theorem \ref{thm1error}. Tables \ref{table3}-\ref{table4} present the numerical results on quasi-uniform grids, where the grid ratio $h_{max}/h_{min} \leq 3$ satisfies the quasi-uniform mesh condition.

\subsection{Energy dissipation test} In this simulation, we verify that the constructed scheme \eqref{discretescheme} is unconditionally dissipative in terms of the original energy. We set 
\begin{equation*} 
  \setlength{\abovedisplayskip}{4pt}  
  T = 1,\,\gamma = 1,\,\beta = 1,\,\Omega = (0,\,2\pi)^2,
  \setlength{\belowdisplayskip}{4pt} 
\end{equation*} 
and use the following initial profile 
\begin{equation*}
   \setlength{\abovedisplayskip}{4pt}  
    \left\{
    \begin{array}{l}
         m^x_e(x,y,0) = \sin(x)\cos(y),  \\
         m^y_e(x,y,0) = \cos(x)\cos(y),  \\
         m^z_e(x,y,0) = \sin(y). 
    \end{array} \right. 
    \setlength{\belowdisplayskip}{4pt} 
\end{equation*}

The time evolution of the original energy functional, given by  $ \int_\Omega|\nabla\mathbf{m}_h|^2d\mathbf{x}$, with time step $\Delta t =0.01$ and spatial mesh sizes, is presented in Figure \ref{fig1} (a). It is observed that the original energy decreases monotonically for different spatial mesh sizes, indicating a dissipative behavior.
{\color{black}
\subsection{Phenomenon of blowup}
In this subsection, we investigate the possible blowup of the LLG equation with certain smooth initial data, as given by \cite{an2021optimal, bartels2008numerical, chen1998evolution}. Set $\Omega = (-1/2,\,1/2)^2$, and let the initial data $\mathbf{m}^0$ be defined by
\begin{equation*}
   \setlength{\abovedisplayskip}{4pt}  
    \mathbf{m}^0(\mathbf{x}) = \left\{
    \begin{array}{ll}
         (0,\,0,\,-1) , &\,\forall\,|\mathbf{x}|\geq1/2,  \\
        \displaystyle (\frac{2x_1A}{A^2+|\mathbf{x}|^2},\,\frac{2x_2A}{A^2+|\mathbf{x}|^2},\,\frac{A^2 - |\mathbf{x}|^2}{A^2+|\mathbf{x}|^2}) , &\,\forall\,|\mathbf{x}|\leq1/2, 
    \end{array} \right. 
   \setlength{\belowdisplayskip}{4pt} 
\end{equation*}
with $A = (1 - 2|\mathbf{x}|)^4$. We take the the parameters $\beta = 1,\,\gamma = 1$ in the LLG equation \eqref{originalmodel}.

The LLG equation is solved by the proposed scheme \eqref{discretescheme} on a quasi-uniform mesh with $N_x =N_y= 128$ and $\Delta t = 10^{-4}$. The orthogonal projection of the vector field $\mathbf{m}^{n+1}$ on the $x_1x_2$-plane, and close-up pictures of $\mathbf{m}^{n+1}$ near the origin at a sequence of time instants, $t = 0,\,0.06, 0.15,\,0.30,\,0.32,\,0.35$, are displayed in Figures. \ref{fig2}-\ref{fig3}. It is observed that $\mathbf{m}^{n+1}$ preserves $(0,\,0,\,1)^T$ at the origin and gradually turns down to $(0,\,0,\,-1)^T$ near the origin, which is consistent with the blowup phenomenon presented in \cite{an2021optimal, bartels2008numerical}. 

Unlike finite difference methods that are typically restricted to uniform meshes in \cite{li2026stability}, the proposed scheme in this work could be naturally extended to quasi-uniform mesh discretization. To further validate the benefits of the quasi-uniform mesh implementation, we take the numerical results acquired on a uniform \(256\times256\) mesh in \cite{li2026stability} as the reference solution and quantitatively compare the energy evolution curves produced by the centrally refined quasi-uniform \(128\times128\) mesh and the standard uniform \(128\times128\) mesh, where comparisons of different mesh discretizations are illustrated in Figure \ref{fig6}. As shown in Figure \ref{fig1}(b), monotonic energy decay is observed for all mesh configurations. The energy profile of the centrally refined quasi-uniform \(128\times128\) mesh matches the reference solution considerably well. In addition, the quasi-uniform mesh scheme achieves a computational overhead comparable to that of the uniform \(128\times128\) mesh and significantly lower than that of the high-resolution uniform \(256\times256\) mesh. These observations firmly validate the superiority and efficiency of extending the developed algorithm to quasi-uniform meshes.
}
\section{Conclusion}
In this article, we propose and analyze a linear mass-lumped finite element numerical method for the Landau-Lifshitz-Gilbert equation, which is straightforward to implement and enforces the non-convex constraint $|\mathbf{m}^n_h| = 1$ at all finite element nodes. Besides the tangent-plane finite element method, which is distinct from classical finite element approaches, to the best of our knowledge, the proposed scheme is currently the only classical finite element method that enforces both the unit-length constraint and unconditional original energy dissipation simultaneously. Based on the equivalent weak formulation, we come up with an innovative technique to combine the finite element method with finite difference analysis, by employing interpolation operators to treat the highly nonlinear term. Such an approach enables us to derive an optimal rate convergence analysis and error estimate as well. Theoretically, this numerical method overcomes the difficulty that classical finite element methods cannot preserve node-wise properties, and establishes the result of optimal-order convergence on triangular meshes. 
Developing higher-order finite element methods based on renormalization techniques that maintain these two key properties simultaneously remains a significant challenge and will constitute a primary direction for our future research.

\bibliographystyle{siamplain}
\bibliography{ref}
\end{document}